\title{Quantitative Gromov Compactness}
\author{Mohan Swaminathan}
\newtheorem{theorem}{Theorem}[section]
\newtheorem{lemma}[theorem]{Lemma}
\newtheorem{corollary}[theorem]{Corollary}
\newtheorem{proposition}[theorem]{Proposition}
\theoremstyle{definition}
\newtheorem{definition}[theorem]{Definition}
\theoremstyle{remark}
\newtheorem{remark}[theorem]{Remark}
\numberwithin{equation}{subsection}
\newcommand{\Mbar}{\overline{\mathcal M}}
\newcommand{\delbar}{\bar\partial}
\newcommand{\Aut}{\normalfont\text{Aut}}
\newcommand{\bA}{\mathbb{A}}
\newcommand{\bC}{\mathbb{C}}
\newcommand{\bP}{\mathbb{P}}
\newcommand{\bR}{\mathbb{R}}
\newcommand{\cA}{\mathcal{A}}
\newcommand{\cC}{\mathcal{C}}
\newcommand{\cF}{\mathcal{F}}
\newcommand{\cK}{\mathcal{K}}
\newcommand{\cM}{\mathcal{M}}
\newcommand{\cP}{\mathcal{P}}
\newcommand{\cT}{\mathcal{T}}
\newcommand{\fX}{\mathfrak{X}}
\newcommand{\fo}{\mathfrak{o}}
\newcommand{\fr}{\mathfrak{r}}
\newcommand{\dist}{\normalfont{\text{dist}}}
\begin{document}
	\maketitle
	\begin{abstract}
	    We establish a quantitative version of the Gromov compactness theorem for closed genus $0$ pseudoholomorphic curves in the setting of a tamed almost complex manifold with bounded geometry.
	\end{abstract}
	\tableofcontents
	\begin{section}{Introduction}\label{intro}
	\noindent Let $(X,\omega,J,g)$ be a 4-tuple where $(X,g)$ is a smooth Riemannian manifold, $\omega$ is a symplectic form on $X$ and $J$ is an $\omega$-tame almost complex structure. We will assume that we have $C^k$ bounds on the geometry of $(X,\omega,J,g)$ for some integer $k\ge 3$. Gromov's compactness theorem (\cite[1.5.B]{gromov85}, \cite[Theorem 5.3.1]{McSa}, \cite[Theorem V.1.2]{hummel}) then implies that if we fix integers $\gamma,\ell\ge 0$, a constant $A>0$ and a compact subset $K\subset X$, then the moduli space 
	\begin{align}
	    \Mbar_{\gamma,\ell}(X,J;K)^{\le A}    
	\end{align}
	of (closed) stable $\ell$-pointed $J$-holomorphic maps in $X$ of genus $\gamma$ with energy $\le A$ and image lying in $K$ is compact. In this article, we will establish a quantitative version of this statement for $\gamma = 0$.
	\subsection{Setup}
	We start by identifying $\bP^1$ with $\Mbar_{0,4}$ by mapping $[u:v]$ (with $u^3\ne v^3$) to the curve $\bP^1$ equipped with the four marked points $[1:1],[e^{2\pi i/3}:1],[e^{-2\pi i/3}:1],[u:v]$ and extending continuously. This endows $\Mbar_{0,4}$ with a natural round metric of total area $4\pi$ (see \textsection\ref{notation}). Next, for any $m\ge 4$ consider the map
	\begin{align}
	    \iota_m:\Mbar_{0,m}\to\Mbar_{0,4}^{\binom m4}
	\end{align}
	given by mapping $[\Sigma,x_1,\ldots,x_m]\in\Mbar_{0,m}$ to the $\binom m4$-tuple, indexed by choices $1\le i<j<k<l\le m$, of 4-point stabilizations $[\Sigma,x_i,x_j,x_k,x_l]^\text{st}\in\Mbar_{0,4}$. The map $\iota_m$ is a closed holomorphic embedding \cite[Theorem D.4.2]{McSa}. We can use the natural forgetful map $\pi_m:\Mbar_{0,m+1}\to\Mbar_{0,m}$, which is compatible with the embeddings $\iota_m$ and $\iota_{m+1}$, and the combinatorial identity $\binom {m+1}4 = \binom m4 + \binom m3$ to get the closed holomorphic embedding
	\begin{align}
	   j_{m+1}:\Mbar_{0,m+1}\hookrightarrow\Mbar_{0,m}\times Z_m 
	\end{align}
	which realizes $\Mbar_{0,m+1}$ as an algebraic family of closed subschemes of $Z_m := \Mbar_{0,4}^{\binom m3}$, parametrized by $\Mbar_{0,m}$. This algebraic family yields a natural map
	\begin{align}
	    \hat\iota_m:\Mbar_{0,m}&\to\cK(Z_m)\\
	    s&\mapsto\cC_s := \pi_m^{-1}(s)\subset Z_m
	\end{align}
	from the moduli space of stable $m$-pointed genus $0$ curves to the space of compact subsets of $Z_m$, endowed with the Hausdorff distance metric (Definition \ref{hdorff-defn}). Here, we view $Z_m$ as a metric space by equipping it with the distance function obtained by taking the maximum of the distance functions on each of the $\Mbar_{0,4}$ factors. It is easy to verify that $\hat\iota_m$ is injective and continuous and thus, a closed topological embedding (since the domain is compact and the target is Hausdorff).
	\begin{definition}
	    For any $L>0$ and any integer $m\ge 3$, define the \textbf{\emph{$L$-distance}} on $Z_m\times X$ to be the metric which is given by the maximum of the distance function on $Z_m$ and $L^{-1}$ times the distance function on $X$. For compact subsets $A,B\subset Z_m\times X$, let $d_{H,L}(A,B)$ the Hausdorff $L$-distance between $A$ and $B$. Given $p,q\in\Mbar_{0,m}$ and continuous maps
	    $f:\cC_p\to X,g:\cC_q\to X$, define the \textbf{\emph{$L$-distance}} between $f$ and $g$ to be
	    \begin{align}
	        \dist_L(f,g) = d_{H,L}(\Gamma_f,\Gamma_g)
	    \end{align}
	    where $\Gamma_f$ (resp. $\Gamma_g$) is the graph of $f:\cC_p\to X$ (resp. $g:\cC_q\to X$), viewed as a compact subset of $Z_m\times X$.
	\end{definition}
	\begin{definition}
	    Given $A>0$, an integer $\ell\ge 0$, a stable $J$-holomorphic map $u = [\Sigma,x_1,\ldots,x_\ell,f]\in\Mbar_{0,\ell}(X,J;K)^{\le A}$ and a nonnegative integer $m\ge 3-\ell$, define an \textbf{\emph{$m$-decoration}} of $u$ to be a choice a point $p\in\Mbar_{0,\ell + m}$ and a biholomorphism $\varphi:\Sigma\to\cC_p$ with $\varphi(x_1),\ldots,\varphi(x_\ell)$ being the first $\ell$ points, in this order, on the $(m+\ell)$-pointed curve $\cC_p$. An $m$-decoration $\varphi$ yields a $J$-holomorphic map $u^\varphi := u\circ\varphi^{-1}$ which has a well-defined associated \textbf{$\varepsilon$-local Lipschitz constant}
	    \begin{align}
	        \text{Lip}_\varepsilon(u^\varphi) = \sup_{0<\dist(q_1,q_2)\le\varepsilon} \frac{\dist(u^\varphi(q_1),u^\varphi(q_2))}{\dist(q_1,q_2)}
	    \end{align}
	    for every $0<\varepsilon\le\pi$. Here, we are using metric induced by the inclusion $\cC_p\subset Z_{\ell + m}$ on the domain. (One should think of a local Lipschitz estimate as similar to a local gradient bound.) Moreover, given any $\delta>0$, we can define the neighborhood
	    \begin{align}
	        V_{L,\delta}(u;\varphi)\subset\Mbar_{0,\ell}(X,J;K)^{\le A}
	    \end{align}
	    of $u$ to consist of all those stable $J$-holomorphic maps $v\in\Mbar_{0,\ell}(X,J;K)^{\le A}$ for which there exists an $m$-decoration $\psi$ satisfying $\dist_L(u^\varphi,v^\psi)\le\delta$.
	\end{definition}
	\subsection{Main result}
	We can now state the main result, proved in \textsection\ref{last-proof}. Much more precise versions of this statement are contained in Theorems \ref{holo-map-tot-bound} and \ref{main-covering-theorem}.
	\begin{theorem}\label{intro-theorem}
	    Assume $(X,\omega,J,g)$ has bounded geometry, $K\subset X$ is a compact subset, $\ell\ge 0$ in an integer and $A>0$ is a real number. Then, there exists $\lambda_0>0$ depending only on the bounds on the geometry, an integer $m\ge 0$ and a real number $\Lambda>0$ both depending explicitly on $\ell, A$, the local bounds on the geometry of $X$ and not on $K$ with the following significance.
	    \begin{enumerate}[\normalfont(i)]
	        \item Given any $u\in\Mbar_{0,\ell}(X,J;K)^{\le A}$, it has an $m$-decoration $\varphi$ such that the associated map $u^\varphi$ has $1$-local Lipschitz constant $\le \Lambda\lambda_0$.
	        \item Let $u\mapsto\varphi_u$ denote an assignment of a $m$-decoration of $1$-local Lipschitz constant $\le\Lambda\lambda_0$ as in {\normalfont(i)} to each $u\in\Mbar_{0,\ell}(X,J;K)^{\le A}$. Given any positive $0<\delta\le 1$, we can then find an integer $N\ge 0$ depending explicitly on $\ell, A$, $K\subset (X,\omega,J,g)$ and $\delta$ with the following property. There is a subset $I\subset\Mbar_{0,\ell}(X,J;K)^{\le A}$ with $\le N$ elements, such that for every $u\in\Mbar_{0,\ell}(X,J;K)^{\le A}$, we can find a corresponding $v\in I$ such that we have
	        \begin{align}
	            \dist_{\lambda_0} (u^{\varphi_u},v^{\varphi_v})\le4\delta.
	        \end{align}
	        In particular, the neighborhoods $\{V_{\lambda_0,4\delta}(v;\varphi_v)\}_{v\in I}$ cover $\Mbar_{0,\ell}(X,J;K)^{\le A}$.
	    \end{enumerate}
	    In fact, we can take $m$ and $\Lambda$ to be given by the explicit formulas
	    \begin{align}
	        m &= \lfloor c\cdot(\ell + A/\lambda^2) \rfloor \\
	        \log\Lambda &= c\cdot(\ell + A/\lambda^2)
	    \end{align}
	    where $\lambda$ is a constant depending only on the local bounds of the geometry of $X$ while $c$ is a sufficiently large absolute constant. Having fixed $m$ and $\Lambda$, we can take $N$ to be given by the following explicit formula
	    \begin{align}
	        N = \left(1 + \sigma\delta^{-2k}\nu(K,\lambda_0)\right)^{(8\pi\Lambda^2\delta^{-2})^{\binom{m+\ell}3}}.
	    \end{align}
	    where $\sigma$ is a constant depending on the local bounds on the geometry of $X$, $\nu(K,\lambda_0)$ is the size of the smallest $\lambda_0$-net in the compact metric space $K$ and $2k = \dim X$.
	\end{theorem}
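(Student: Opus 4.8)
The plan is to reduce the statement to two essentially independent analytic inputs: a \emph{reparametrization-and-gradient-bound} step (giving (i) and the explicit $m,\Lambda$) and a \emph{net-counting} step (giving (ii) and the explicit $N$). For (i), the idea is that any stable genus-$0$ $J$-holomorphic map of energy $\le A$ with $\ell$ marked points has a domain that is a tree of $\bP^1$'s; on each component the restriction of $f$ is a $J$-holomorphic sphere, and by the monotonicity/$\epsilon$-regularity inequality for $J$-holomorphic curves in bounded geometry there is a universal energy quantum $\hbar>0$ below which no energy can concentrate. Hence the number of components, and the number of ``bubbling'' marked/nodal points needed to stabilize, is controlled by $\ell + A/\hbar$; choosing $m \asymp \ell + A/\lambda^2$ extra marked points (distributed so that every component of the stabilized curve carries at least three special points and the conformal modulus of every annular neck is bounded) produces the decoration $\varphi$. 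The local Lipschitz bound $\mathrm{Lip}_1(u^\varphi)\le\Lambda\lambda_0$ then follows from interior elliptic (gradient) estimates for $J$-holomorphic maps applied on each component, where the dependence of the constant on the combinatorics of the tree compounds multiplicatively across nested necks, which is exactly why $\log\Lambda$, rather than $\Lambda$, is linear in $\ell + A/\lambda^2$. This is the step I expect to cite from Theorem~\ref{holo-map-tot-bound}.

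For (ii), fix the assignment $u\mapsto\varphi_u$ from (i), so that every $u^{\varphi_u}$ is a map $\cC_{p_u}\to X$ with domain a fiber of $\pi_{\ell+m}$ sitting inside the fixed compact metric space $Z_{\ell+m}$, with image in $K$, and with $1$-local Lipschitz constant $\le\Lambda\lambda_0$. The key observation is that such a map is determined up to $\dist_{\lambda_0}$-error $4\delta$ by a \emph{finite combinatorial datum}: first the graph $\Gamma_{u^{\varphi_u}}\subset Z_{\ell+m}\times X$ is within Hausdorff $\lambda_0$-distance of the graph of a map defined only on a $\delta/\Lambda$-net of $\cC_{p_u}$, and second, since $\cC_{p_u}$ ranges over the image of the closed embedding $\hat\iota_{\ell+m}$ and is thus $\delta$-close to one of finitely many model fibers (the target $\cK(Z_{\ell+m})$ being compact), one may fix a single $\delta/\Lambda$-net $P$ in a model fiber, of cardinality $\lesssim(8\pi\Lambda^2\delta^{-2})^{\binom{m+\ell}{3}}$ because the model fiber embeds in $Z_{\ell+m}=\Mbar_{0,4}^{\binom{\ell+m}{3}}$ and each $\Mbar_{0,4}$ factor has area $4\pi$. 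A map $u^{\varphi_u}$ is then recorded by the function $P\to (\text{a }\lambda_0\text{-net of }K)$ obtained by rounding $u^{\varphi_u}$ on $P$; there are at most $\nu(K,\lambda_0)^{|P|}$ such functions, and two maps with the same rounded datum have graphs within $d_{H,\lambda_0}$-distance controlled by $\delta$ plus the Lipschitz constant times the net-scale, i.e.\ $\le 4\delta$. Picking one representative $v$ for each realized datum gives the set $I$, and $N$ is bounded by the number of data, which after absorbing the dependence of $\nu$ on $\delta$ (via $\delta^{-2k}$ in place of finer control) yields the displayed formula.

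The main obstacle is the uniformity in (ii): a priori the domains $\cC_{p_u}$ vary, so ``rounding on a net'' does not immediately make sense with a \emph{fixed} index set $P$, and one must use the continuity and properness of $\hat\iota_{\ell+m}$ together with the local Lipschitz bound to transport nets between nearby fibers while only degrading the $\dist_{\lambda_0}$-estimate by a controlled amount — this is where the factor $4\delta$ (rather than, say, $2\delta$) and the particular shape of the exponent $\binom{m+\ell}{3}$ come from, and it is carried out in Theorem~\ref{main-covering-theorem}. A secondary technical point is checking that the Hausdorff $L$-distance on graphs interacts correctly with composition by the decorations $\varphi_u$, so that $\dist_{\lambda_0}(u^{\varphi_u},v^{\varphi_v})$ genuinely controls membership in $V_{\lambda_0,4\delta}(v;\varphi_v)$; this is immediate from the definitions once the net transport is in place. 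Given these two theorems, the final assembly is bookkeeping: substitute $m=\lfloor c(\ell+A/\lambda^2)\rfloor$ and $\log\Lambda=c(\ell+A/\lambda^2)$ from the first, feed $\Lambda$ and $\delta$ into the second, and read off $N$.
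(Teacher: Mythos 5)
Your two-step architecture --- a reparametrization/gradient-bound step producing the decoration and the exponentially large $\Lambda$, followed by an Arzel\`a--Ascoli-type count giving $N=(1+|C|)^{|B|}$ --- is the paper's, and your accounting of the exponent $\binom{m+\ell}{3}$ and of the $4\delta$ is correct (though your citations are swapped: the decoration/gradient step is Theorem \ref{main-covering-theorem}, the counting step is Theorem \ref{holo-map-tot-bound}). However, the mechanism you propose for part (i) does not work as stated. You want to distribute the $m$ extra marked points so that ``the conformal modulus of every annular neck is bounded'' and then conclude from interior elliptic estimates on each component. The neck moduli are unbounded over $\Mbar_{0,\ell}(X,J;K)^{\le A}$ (gluing parameters can be arbitrarily small), so subdividing every neck into bounded-modulus pieces is incompatible with $m\le c(\ell+A/\lambda^2)$; and interior estimates only give $|df|\lesssim\lambda/|z-S|$ near the concentration set $S$ (Proposition \ref{c-mod-b}), which blows up and does not yield a Lipschitz bound across a long neck in any single component's round metric. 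The paper keeps the long necks, puts exactly three marked points on each boundary circle of each thick/thin region so that the metric induced from $Z_{m+\ell}$ dominates the two-ended product metric $\dist_e$ on each neck, and obtains the Lipschitz bound there from the exponential decay estimate for long cylinders of small energy (Lemma \ref{exp-decay}), not from interior estimates. This is the crux of the whole construction and your sketch does not engage with it.

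For part (ii), your resolution of the varying-domain problem --- fix a $(\delta/\Lambda)$-net in a single model fiber and transport it to nearby fibers via continuity and properness of $\hat\iota_{\ell+m}$ --- is left unexecuted and would in any case alter the count: you would need a quantitative point-correspondence between Hausdorff-close fibers interacting with the Lipschitz bound, plus a factor of $N$ counting the model fibers (a net of $\Mbar_{0,\ell+m}$), which is absent from the stated formula. The paper's Lemma \ref{map-space-net} avoids transport entirely: it takes a $(\delta/\Lambda)$-net $B$ of the fixed ambient space $Z_{\ell+m}$ and records, for each $b\in B$, either a symbol $*$ (if $b$ is far from the domain) or an approximate value of the map at a nearby domain point. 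The $*$-pattern determines the domain up to Hausdorff distance, so no separate net of domains is needed, and the $4\delta$ diameter bound is a two-line triangle-inequality computation. Your proposal identifies the right obstacle but supplies a mechanism that would not reproduce the claimed $N$ without further work.
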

	\begin{remark}
	    Part (i) of Theorem \ref{intro-theorem} (or, more precisely, Theorem \ref{main-covering-theorem}) is analogous to the result of Groman on thick-thin decompositions of holomorphic curves \cite{groman}. Part (ii) provides a way of using part (i) to estimate the ``size" or ``complexity" of the moduli space, provided we can understand the ``complexity" of each of the individual neighborhoods $V_{\lambda_0,\delta}(v;\varphi_v)$, each of which can be covered by an explicit Kuranishi chart if $\delta$ is sufficiently small (how small should depend on $A$ and the bounds on the geometry).
	\end{remark}
	\begin{remark}
	    Theorem \ref{intro-theorem}(i) follows from a careful reworking of the usual proof of convergence modulo bubbling in \cite[Chapter 4]{McSa} for a sequence of maps with bounded energy. The new marked points are chosen so that they have the effect of magnifying regions where energy is concentrating and this brings down the Lipschitz constant. Theorem \ref{intro-theorem}(ii) follows by examining the proof of the Arzel\`a-Ascoli Theorem for Lipschitz maps and modifying it to deal with the case of variable domains (a precise form of this argument appears in Theorem \ref{holo-map-tot-bound}).
	\end{remark}
	\subsection{Acknowledgements}
	I would like to thank my advisor John Pardon for several helpful discussions and for comments on an earlier version of this article. I am also grateful to Shaoyun Bai, Yash Deshmukh and Thomas Massoni for useful conversations.
	\end{section}
	\begin{section}{Preparation}\label{prep}
	Before beginning the main discussion, we set notations and recall some fundamental analytical properties of pseudoholomorphic curves.
	\subsection{Notation}\label{notation}
	For $r\ge 0$, an integer $n\ge 1$ and $x\in\bR^n$, define $B^n(x,r)$ to be the set of $y\in\bR^n$ satisfying $|y-x|\le r$. For $x = 0$, we will denote $B^n(x,r)$ simply by $B^n(r)$. Denote the boundary spheres of $B^n(x,r)$ and $B^n(r)$, by $S^{n-1}(x,r)$ and $S^{n-1}(r)$ respectively.\\\\
	\noindent For a set $\Omega\subset\bR^n$ and a point $z\in\bR^n$, we define the distance from $z$ to $\Omega$ to be the quantity 
	\begin{align}\label{point-set-distance}
	    |z - \Omega|= \inf_{w\in\Omega}|z - w|.
	\end{align}
	We will encounter many constants in our estimates below and while their exact values are not important, we will find it useful to record their dependence on parameters (e.g. the bounds on the geometry of $X$ and previously defined constants). We indicate that a constant $C$ explicitly depends on the $C^k$ bounds on the geometry of $X$ and some other parameters $a_1,\ldots,a_N$ by writing
	\begin{align}
	    C = C(X_k,a_1,\ldots,a_N)
	\end{align}
	when it is first defined.
	\\\\
	\noindent Consider the complex projective line $\bP^1$ with homogeneous coordinates $[x:y]$. The standard round metric on $\bP^1$ (with area $4\pi$) is the K\"ahler metric with associated K\"ahler form {$\omega_{\bP^1}$} given (in the standard complex affine coordinate $z = x/y$) by
	    \begin{align}
	        \omega_{\bP^1} = \frac{2i\cdot dz\wedge d\bar z}{(1 + |z|^2)^2}
	   \end{align}
	    Similarly, define the standard flat metric on $\bC$ to be the K\"ahler metric with K\"ahler form {$\omega_\bC$} given (in the standard complex coordinate $z$) by
	    \begin{align}
	        \omega_\bC = \textstyle\frac i2\cdot dz\wedge d\bar z
	    \end{align}
	\noindent We use the notation $\ell(\cdot)$ to denote the $g$-length of paths in $X$. We use $\cA(\cdot)$ to denote the $\omega$-action of a loop in $X$ (assumed to be sufficiently small so that the symplectic action is defined unambiguously). Finally, given a compact Riemann surface (or, more generally, a projective prestable curve) $\Sigma$ and a smooth map $u:\Sigma\to X$, we denote its energy by
	\begin{align}
	    E(u) = \frac12\int_\Sigma|du|^2_h\,\text{dvol}_h
	\end{align}
	where $h$ is any choice of a conformal metric on $\Sigma$. For an open subset $U\subset\Sigma$, we sometimes write $E(u,U)$ to denote the energy of the map $u|_U:U\to X$. For any two points $x,x'\in X$, we use the notation $\dist(x,x')$ to denote the $g$-distance between $x$ and $x'$. We will also use the symbol $\dist(\cdot,\cdot)$ to denote distances in any metric space when the choice of the metric is clear from the context.
	\subsection{Local \emph{a priori} estimates}
	We note the following fundamental estimates for pseudoholomorphic maps into $X$.
	\begin{lemma}[Elliptic bootstrapping]\label{ell-boot}
	    There exist positive constants $\lambda_0 = \lambda_0(X_1)$ and $c_k = c_k(X_{k+1})\ge 1$ for each integer $k\ge 2$, with the following significance. If $u:B^2(r)\to X$ is a $J$-holomorphic map, then
	    \begin{align}
	        |(\nabla^k u)(0)|\le c_k\|du\|_\infty\left(\frac1r + \frac{\|du\|_\infty}{\lambda_0}\right)^{k-1}.
	    \end{align}
	\end{lemma}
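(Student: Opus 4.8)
The plan is to reduce everything to a fixed-scale interior estimate via rescaling, and then bootstrap through elliptic regularity for the nonlinear Cauchy–Riemann operator. First I would introduce the scale-invariant quantity that controls the whole problem: set $\Lambda := \|du\|_\infty$ and observe that the hypothesized bound should really be thought of as comparing two length scales, the domain radius $r$ and the ``bubbling scale'' $\lambda_0/\Lambda$ determined by the monotonicity constant. Let $\rho := \min\{r, \lambda_0/\Lambda\}$ be the smaller of the two. On the ball $B^2(\rho/2)$ I would rescale the domain by $\rho$, i.e.\ consider $\tilde u(z) := u(\rho z)$ on $B^2(1/2)$; this is $J$-holomorphic for the same $J$ (the domain conformal rescaling does not touch the target), and $\|d\tilde u\|_{L^\infty(B^2(1/2))} = \rho\,\Lambda \le \lambda_0$ by construction. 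So after rescaling we are in the regime of \emph{small} gradient relative to $\lambda_0$, which is exactly where the standard interior elliptic estimates for pseudoholomorphic maps apply with constants depending only on the $C^{k+1}$ bounds on the geometry of $X$.

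The core analytic step is then the fixed-scale statement: there is a constant $c_k = c_k(X_{k+1})$ such that any $J$-holomorphic $v:B^2(1/2)\to X$ with $\|dv\|_\infty \le \lambda_0$ satisfies $|(\nabla^k v)(0)| \le c_k\,\|dv\|_{L^\infty(B^2(1/2))}$. I would prove this by the usual three-step bootstrap: (1) a Calderón–Zygmund / $W^{2,p}$ estimate applied to the equation $\partial_s v + J(v)\partial_t v = 0$, rewritten in a local frame as $\delbar v = (\text{lower order}) \cdot dv$, upgrading the $L^\infty$ gradient bound to $W^{1,p}$ and hence $C^0$-smallness of $v$ on a slightly smaller ball; (2) Schauder estimates for the linearized operator, using that once $v$ has small image we may work in a single geodesic coordinate chart on $X$ where the equation becomes $\delbar v + A(v)\,\del v + B(v)\,\delbar v = 0$ with $A,B$ controlled by $k+1$ derivatives of $J$ and $g$; (3) inductively differentiating the equation and reapplying Schauder, picking up one factor of the $C^{k+1}$-bound at each stage. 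This gives $|(\nabla^k v)(0)| \lesssim \|dv\|_{C^0}$ after finitely many radius shrinkages, and all intermediate constants depend only on $X_{k+1}$ and on the absolute Sobolev/Schauder constants — which is where the factor $c_k \ge 1$ and its dependence on $k$ enter.

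Undoing the rescaling converts $\nabla^k$ by a factor $\rho^{-k}$ and $\|d\tilde u\|_\infty$ by a factor $\rho$, yielding $|(\nabla^k u)(0)| \le c_k\,\rho^{1-k}\,\Lambda = c_k\,\Lambda\,\rho^{-(k-1)}$. Since $\rho^{-1} = \max\{1/r,\ \Lambda/\lambda_0\} \le 1/r + \Lambda/\lambda_0$, this is exactly the claimed bound $|(\nabla^k u)(0)| \le c_k\,\|du\|_\infty\,\bigl(\tfrac1r + \tfrac{\|du\|_\infty}{\lambda_0}\bigr)^{k-1}$. One subtlety to flag: the $L^\infty$ gradient bound $\|du\|_\infty$ is over the full ball $B^2(r)$, so after rescaling to $B^2(1/2)$ we use it as a bound on $B^2(1/2)$ (not just at the center), which is harmless. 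I expect the main obstacle to be purely bookkeeping rather than conceptual: organizing the iterated Schauder estimates so that the constant depends only on $C^{k+1}$ (and not $C^{k+2}$ or higher) bounds, and keeping track of the fact that $\lambda_0$ must be chosen uniformly — small enough that the $\epsilon$-regularity / monotonicity argument guaranteeing no energy concentration on $B^2(\rho/2)$ goes through with only $C^1$ bounds on the geometry, hence $\lambda_0 = \lambda_0(X_1)$. Apart from that, every ingredient is standard interior elliptic theory for the $\delbar$-operator, and the proof is a matter of assembling it at the correct scale.
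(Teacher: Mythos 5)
Your proposal is correct and follows essentially the same route as the paper: rescale to a unit-size ball on which the gradient is at most $\lambda_0$, apply standard interior elliptic bootstrapping in a coordinate chart whose size is governed by $\lambda_0(X_1)$, undo the rescaling, and combine the two scales via $\min\{r,\lambda_0/\|du\|_\infty\}^{-1}\le\frac1r+\frac{\|du\|_\infty}{\lambda_0}$. The one point of divergence: in the regime $r\|du\|_\infty<\lambda_0$ your argument needs the unit-scale estimate in its linear form $|(\nabla^k v)(0)|\le c_k\|dv\|_\infty$ (since after rescaling by $\rho=r$ the gradient sits strictly below $\lambda_0$), whereas the paper invokes only the weaker form $|(\nabla^k v)(0)|\le c_k\lambda_0$ and recovers linearity by rescaling the \emph{target} metric $g\mapsto(\rho/r)^2g$ --- which can only improve the bounds on the geometry --- so that the rescaled gradient equals $\lambda_0$ exactly. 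The linear form you assert is standard and provable by the bootstrap you sketch, so this is a stylistic rather than substantive difference.
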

	\begin{proof}
	    See Appendix \ref{ell-boot-appx} for the proof.
	\end{proof}
	\begin{lemma}[Mean value inequality]\label{mvi}
	    There exist positive constants $C\ge 1$ and $q = q(X_2)$ with the following significance. If $u:B^2(r)\to X$ is a $J$-holomorphic map with
	    $E(u) \le q$, then we have
	    \begin{align}\label{mv-est}
	        r^2|du(0)|^2\le C\cdot E(u).
	    \end{align}
	\end{lemma}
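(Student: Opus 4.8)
The plan is to run the classical $\varepsilon$-regularity (mean value) argument for pseudoholomorphic curves, as in \cite[Lemma 4.3.1]{McSa}, while tracking how the threshold depends on the bounds on the geometry. I would first normalize the domain by rescaling: given $u:B^2(r)\to X$ with $E(u)\le q$, the map $v(z):=u(rz)$ is $J$-holomorphic on $B^2(1)$ with $E(v)=E(u)$ (the Dirichlet energy is conformally invariant in real dimension $2$) and $|dv(0)|=r\,|du(0)|$. Hence it suffices to establish $|dv(0)|^2\le C\cdot E(v)$ whenever $E(v)\le q$; undoing the rescaling then yields $r^2|du(0)|^2=|dv(0)|^2\le C\cdot E(v)=C\cdot E(u)$.

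By elliptic regularity (Lemma \ref{ell-boot}) the map $v$ is smooth, so the energy density $e(z):=\tfrac12|dv(z)|^2$ is a smooth nonnegative function on $B^2(1)$ with $\int_{B^2(1)}e=E(v)$. Differentiating the equation $\delbar_J v=0$ and invoking the usual Weitzenböck-type identity, together with the $C^2$-bounds on $(X,\omega,J,g)$, one obtains a differential inequality of the form
\begin{align}
	\Delta e \ge -c_0(e+e^2)
\end{align}
on $B^2(1)$, where $c_0=c_0(X_2)$ depends only on the $C^2$-bounds: the linear term comes from the torsion $\nabla J$ and the quadratic one from the curvature of $X$.

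The heart of the matter is then an analytic mean value lemma of Heinz type: there is an absolute constant $C\ge 1$ such that any smooth $f\ge 0$ on $B^2(1)$ satisfying $\Delta f\ge-c_0(f+f^2)$ and $\int_{B^2(1)}f\le q$, with $q=q(X_2)$ chosen so that $c_0 q$ is a sufficiently small absolute constant, obeys $f(0)\le C\int_{B^2(1)}f$. I would prove this by the standard point-picking scheme: maximize $\rho\mapsto(1-\rho)^2\sup_{\overline{B^2(\rho)}}f$ over $\rho\in[0,1]$, pick a maximizer $\rho^*$ and a point $x^*$ realizing $\sup_{\overline{B^2(\rho^*)}}f=:e^*$, and note that $f\le 4e^*$ on $B^2(x^*,\tfrac{1-\rho^*}{2})$. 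Rescaling this ball to unit size and using the smallness of $\int f$, one first shows that no concentration occurs, i.e. $(1-\rho^*)^2e^*$ is bounded; feeding this bound back into the differential inequality makes its right-hand side essentially constant, after which the sub-mean value inequality for subharmonic functions (applied to $f$ plus a small quadratic correction) gives $f(0)\le C\int_{B^2(1)}f$. The smallness of $\int f$ is used both to rule out concentration and to absorb the lower-order term $c_0f$.

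I expect the nonlinear term $e^2$ (and, in this almost-complex setting, the lower-order term $e$) to be the main obstacle: without the a priori bound $E(v)\le q$ the energy density can concentrate and the estimate genuinely fails — this is exactly the bubbling phenomenon — so the point is to take the threshold $q=q(X_2)$ small enough (comparable to $c_0^{-1}$) to preclude it, which is also what forces the two-step structure (crude concentration bound, then refined estimate) in the Heinz lemma. Applying that lemma with $f=e$ and recalling $\int_{B^2(1)}e=E(v)$ finishes the proof.
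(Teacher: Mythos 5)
Your argument is a faithful reconstruction of the classical proof of \cite[Lemma 4.3.1(i)]{McSa}, which is exactly what the paper's primary proof consists of (the proof is literally the citation), so in that sense you have taken the same route and your route is correct: the differential inequality for the energy density, the Heinz-type mean value lemma proved by point-picking to rule out concentration, and the threshold $q$ chosen small relative to the constant in the differential inequality are all as in the standard argument, and your rescaling to $B^2(1)$ at the outset is harmless since the energy and the quantity $r^2|du(0)|^2$ are both scale-invariant.

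It is worth noting, however, that the paper also supplies a genuinely different self-contained proof in Appendix \ref{alt-mvi}. That proof avoids the Weitzenb\"ock computation and the Heinz lemma entirely: it applies Hofer's Lemma to find a point $z_0$ and radius $\rho$ with $\rho|du(z_0)|\ge\frac12 r|du(0)|$ and $\sup_{B^2(z_0,\rho)}|du|\le 2|du(z_0)|$, then uses the elliptic bootstrapping estimate (Lemma \ref{ell-boot}) to show $|du|$ stays comparable to $|du(z_0)|$ on a ball of definite size, and integrates to get the lower bound $E(u)\ge C^{-1}\min\{r|du(0)|,\lambda_0\}^2$, from which the lemma follows by setting $q=C^{-1}\lambda_0^2$. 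The trade-off, as the paper remarks, is that the alternate proof costs one extra derivative: its constants depend on $C^3$ bounds on the geometry, whereas your (and McDuff--Salamon's) argument needs only $C^2$ bounds, which is the dependence actually asserted in the statement $q=q(X_2)$. So your approach is not merely adequate but is the one that justifies the stated regularity dependence.
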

	\begin{proof}
	    This is just Lemma 4.3.1(i) from \cite{McSa}. See Appendix \ref{alt-mvi} for an alternate proof.
	\end{proof}
	\begin{lemma}[Long cylinders of small energy]\label{exp-decay}
	    There exist positive constants $c = c(X_2)$ and $l = l(X_2)\le\lambda_0$, with the following significance. Suppose $u:[R_-,R_+]\times S^1\to X$ is a $J$-holomorphic map with $\|du\|_\infty\le l$. For each $s\in[R_-,R_+]$, let $\gamma_s:S^1\to X$ be the loop $t\mapsto u(s,t)$. Then, for all $R_-\le a\le b\le R_+$, we have
	    \begin{align}\label{action-diff}
	        \int_{[a,b]\times S^1}u^*\omega = \cA(\gamma_a)-\cA(\gamma_b).
	    \end{align}
	    For each $s\in[R_-,R_+]$, we also have 
	    \begin{align}\label{weak-iso-peri}
	        |\cA(\gamma_s)|\le c\cdot\ell(\gamma_s)^2.  
	    \end{align}
	    Moreover, there exist constants $K_k = K_k(X_{k+1})\ge 1$ for each integer $k\ge 1$ such that the estimate
	    \begin{align}\label{exp-decay-estimate}
	        |(\nabla^k u)(s,t)|\le K_k\sqrt{E(u)}(e^{s-R_+} + e^{R_--s})
	    \end{align}
	    holds for all $(s,t)\in[R_-+1,R_+-1]\times S^1$, provided that $R_+-R_-\ge 2$.
	\end{lemma}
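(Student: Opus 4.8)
The plan is to proceed in three stages corresponding to the three displayed conclusions, treating them in the order \eqref{action-diff}, \eqref{weak-iso-peri}, \eqref{exp-decay-estimate}. For \eqref{action-diff}, I would simply note that the symplectic action $\cA(\gamma_s)$ is defined via a capping of the small loop $\gamma_s$ (small because $\|du\|_\infty$ is bounded by $l\le\lambda_0$, so the loop has short length and lies in a geodesically convex ball where cappings are essentially unique), and that the difference $\cA(\gamma_a)-\cA(\gamma_b)$ is computed by the integral of $u^*\omega$ over the intervening cylinder $[a,b]\times S^1$ by Stokes' theorem applied to a local primitive of $\omega$; since $u$ is $J$-holomorphic and $\omega$ tames $J$, the integrand $u^*\omega$ is pointwise nonnegative and controlled by $|du|^2$. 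The inequality \eqref{weak-iso-peri} is a version of the isoperimetric inequality: for a sufficiently short loop $\gamma_s$ (again using $\|du\|_\infty\le l$ to bound $\ell(\gamma_s)$), one works in a Darboux-type chart and estimates the action of the capping disk by the square of the loop length, with the constant $c$ absorbing the $C^2$-geometry of $(X,\omega)$; this is the standard ``weak'' isoperimetric inequality and I would cite or reproduce the version in \cite[Chapter 4]{McSa}.

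The heart of the lemma is the exponential decay estimate \eqref{exp-decay-estimate}, and this is where I expect the main work to lie. The strategy is to first establish exponential decay of the \emph{energy density along slices}: for $E(s):=E(u,[s-\tfrac12,s+\tfrac12]\times S^1)$ (or an $L^2$-norm of $du$ on the slice), combine the mean value inequality (Lemma \ref{mvi}), which converts the small total energy into a small pointwise gradient bound on the cylinder, with a differential inequality for the energy. Concretely, the derivative $-\frac{d}{ds}\big(\int_{[s,R_+]\times S^1}u^*\omega\big) = \int_{\{s\}\times S^1}(\text{slice energy density})$ combined with the isoperimetric inequality \eqref{weak-iso-peri} and \eqref{action-diff} gives a bound of the form $g(s)\le c\cdot\ell(\gamma_s)^2 \lesssim c\cdot g'(s)$ for the ``tail action'' $g(s)$ (using $\ell(\gamma_s)^2 \lesssim \int_{\{s\}\times S^1}|\partial_t u|^2$ by Cauchy--Schwarz and the energy identity $|\partial_s u|=|\partial_t u|$ for $J$-holomorphic maps). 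This yields $g(s)\lesssim e^{-2(R_+-s)}$ on the right and the symmetric estimate on the left, hence exponential decay of the energy on subcylinders. Then one upgrades from $L^2$ energy decay to the pointwise gradient bound \eqref{exp-decay-estimate} by the interior elliptic estimate (Lemma \ref{ell-boot} with $k=1$, or a Schauder-type bootstrap) applied on a unit ball around each interior point $(s,t)$ with $s\in[R_-+1,R_+-1]$: the $\|du\|_\infty$ appearing there is already small, so the $(1/r + \|du\|_\infty/\lambda_0)$ factor is $O(1)$, and the gradient at the center is controlled by $\sqrt{E(u,\text{unit ball around }(s,t))}\lesssim\sqrt{E(u)}\,(e^{s-R_+}+e^{R_--s})$ by the energy decay just proved.

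Passing to higher derivatives $\nabla^k u$ is then routine: one applies Lemma \ref{ell-boot} for general $k$ on the same unit balls, noting again that all the quantities entering the right-hand side are uniformly bounded, so $|\nabla^k u|$ at the center is bounded by $\|du\|_{\infty,\text{unit ball}}$ times an $O(1)$ factor, and the $\|du\|_\infty$ over that unit ball inherits the exponential decay (this costs only a fixed shift in the interval, which is why one needs $R_+-R_-\ge 2$ and restricts to $[R_-+1,R_+-1]\times S^1$). The constants $K_k$ then depend on $c_k$ from Lemma \ref{ell-boot}, hence on the $C^{k+1}$ bounds on the geometry, consistent with the statement $K_k=K_k(X_{k+1})$. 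The one point requiring care throughout is to keep the smallness threshold $l$ fixed in terms only of the $C^2$-geometry: $l$ must be small enough that $\|du\|_\infty\le l$ forces each loop $\gamma_s$ to lie in a ball where the isoperimetric inequality \eqref{weak-iso-peri} and the uniqueness of cappings hold, and small enough that the mean value inequality's energy hypothesis $E(u)\le q$ is available on unit subcylinders --- but all of these are $C^2$-type constraints, so $l=l(X_2)$ as claimed, and one checks $l\le\lambda_0$ by shrinking $l$ further if necessary.
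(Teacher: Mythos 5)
Your treatment of \eqref{action-diff} (Stokes plus a telescoping/subdivision argument) and of \eqref{weak-iso-peri} (isoperimetric inequality in a Darboux chart) matches the paper, as does the final bootstrapping to higher derivatives via Lemma \ref{ell-boot}. The gap is in the decay rate of \eqref{exp-decay-estimate}. The differential inequality you set up for the tail action $g(s)$ reads, after Cauchy--Schwarz and the isoperimetric inequality, $g(s)\le 2\pi c\,C'\cdot(-g'(s))$, where $c$ is the constant of \eqref{weak-iso-peri} and $C'$ quantifies the taming of $J$ by $\omega$. Integrating gives $g(s)\lesssim e^{-\mu(R_+-s)}$ with $\mu = (2\pi cC')^{-1}$, a constant depending on the geometry of $X$ that has no reason to be $\ge 2$ (it equals $2$ only in the sharp flat model where $c=\frac1{4\pi}$ and $C'=1$). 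Your claim that this argument ``yields $g(s)\lesssim e^{-2(R_+-s)}$'' is therefore unjustified, and the statement you are proving demands the decay exponent $1$ for $|\nabla^k u|$ exactly, with only the multiplicative constants $K_k$ allowed to depend on the geometry.

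The paper runs precisely your argument as a first step and obtains only $|du(s,t)|\le K'\sqrt{E(u)}(e^{\mu(s-R_+)}+e^{\mu(R_--s)})$ with $\mu=\mu(X_2)$; the missing idea is the subsequent improvement to rate $1$ (Lemma \ref{exp-decay-local}, modeled on Hutchings--Taubes). One uses the crude decay to place the map in a single coordinate chart with $J(0)=i$, writes the Cauchy--Riemann equation as $\partial_s v + Lv = S(v)\cdot Lv$ with $L=i\partial_t$ and $|S(v)|\lesssim |v|$ small, and Fourier-decomposes $v$ into the positive, negative, and zero eigenspaces of $L$. The spectral gap of $L$ on $S^1$ then forces the nonzero modes to satisfy coupled differential inequalities whose solutions decay like $e^{2(s-R_+)}+e^{2(R_--s)}$ in $L^2$, the perturbation being absorbable because $\|v\|_\infty$ is small; the zero mode is controlled by integrating the equation and using the normalization of the mean. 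A Gromov--Schwarz type lemma then converts the $L^2$ decay of $v$ into the pointwise gradient bound. Without this spectral refinement (or an equivalent argument), your proof establishes the lemma only with $e^{s-R_+}+e^{R_--s}$ replaced by $e^{\mu(s-R_+)}+e^{\mu(R_--s)}$ for some geometry-dependent $\mu$, which is a strictly weaker statement.
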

	\begin{proof}
	    This is a variant of previously known statements. See \cite[Lemma 11.2]{FO-kuranishi} and \cite[Lemma 4.7.3]{McSa} for example. For the proof, see Appendix \ref{exp-decay-appx}.
	\end{proof}
	\end{section}
	\begin{section}{Compact families of curves and holomorphic maps}\label{families}
	In this section, we study the compactness properties of certain explicit families of curves and stable maps defined on them. The curves in such a family will be modeled on a fixed marked rooted tree. Please consult Appendix \ref{tree-basics} for the notation and relevant background on trees which will be assumed below.
	\subsection{Curves}\label{curve-family}
	\begin{definition}\label{curve-assoc-tree}
	    Fix a rooted tree $\cT = (V,E,\partial)$ and let $e_0$ be its root edge and $v_0$ the corresponding root vertex (see Definitions \ref{tree-def} and \ref{rooted-tree-def}). Define $\bA_\cT$ to be the vector space
	    \begin{align}
	        \bA_\cT = \bC^{E_\text{int}}\oplus\bigoplus_{v\in V}(\bC^2)^{v^+}
	    \end{align}
	    where $E_\text{int}\subset E$ is the set of full edges. Let the coordinate functions corresponding to the $\bC^{E_\text{int}}$ factor be denoted by $\{\gamma_e\}$ while the coordinates corresponding to an edge $e\in v^+$ for any $v\in V$ (see Definition \ref{edge-orient-def} and Lemma \ref{orient-rooted-tree}) are denoted by $\{(z_{v,e},\rho_{v,e})\}$. Given a positive path (see Definition \ref{path-def}) $\cP = (v_1,\ldots,v_n)$ between vertices $u = v_1$ and $v = v_n$ and an edge $e\in v^+$, define the polynomial function $z_{v,e}^u:\bA_\cT\to\bC$ as
	    \begin{align}\label{marked-point-across-components}
	        z^u_{v,e} = \sum_{k=1}^n \left(\textstyle\prod_{1\le j<k}\gamma_{e_j}\rho_{v_j,e_j}\right)z_{v_k,e_k}
	    \end{align}
	    where we define $e_i$ be the edge with endpoints $v_i$ and $v_{i+1}$ (for $1\le i<n$) and $e_n = e$. Observe that for any vertex $v$ and any edge $e\in v^+$, we have $z_{v,e}^v = z_{v,e}$. 
	    Define $\cM_\cT\subset\bA_\cT$ to be the Zariski open subset where the polynomial function $F_\cT:\bA_\cT\to\bC$, defined by
	    \begin{align}\label{function-def-M}
	        F_\cT = \prod(z^u_{v,e} - z^u_{v',e'})\cdot\prod\rho_{u,e}
	    \end{align}
	    is nonzero. Here, the first product is over all pairs $\{e,e'\}$ of distinct edges in $E\setminus\{e_0\}$ (with $v = e^-$, $v' = e'^-$ and $u$ being the nearest common ancestor of $v,v'$, in the sense of Definition \ref{common-ancestor}) while the second product is over all $e\in E_\text{int}$ (with $u = e^-$). Next, define the closed subscheme {$\cC_\cT$} $\subset\cM_\cT\times(\bP^1)^V$
	    to consist of the tuples $\{\gamma_e\},\{(z_{v,e},\rho_{v,e})\},\{[x_v:y_v]\}$ which, for each edge $e\in E_\text{int}$ (with $u = e^-$ and $v = e^+$), satisfy the homogeneous equation
	    \begin{align}\label{def-eqn}
	        (x_u - z_{u,e}y_u)y_v = \gamma_e\rho_{u,e}x_vy_u.
	    \end{align}
	    Define the projection map {$\pi_\cT:\cC_\cT\to\cM_\cT$} in the obvious fashion by dropping the $(\bP^1)^V$ factor. We define $\pi_\cT:\cC_\cT\to\cM_\cT$ to be the \emph{\textbf{family of curves associated to $\cT$}}. For any point $p\in\cM_\cT$, we define {$\cC_{p,\cT}$} to be the fibre of $\pi_\cT$ over $p$. For each $v\in V$, we have a natural coordinate projection $\pi_v:\cC_\cT\to\bP^1$ (got by projecting to $(\bP^1)^V$ and then projecting to the $\bP^1$ factor corresponding to $v$).
	\end{definition}
	\begin{lemma}\label{prop-family-curve}
	    Let $p^*= (\{\gamma_e^*\},\{(z_{v,e}^*,\rho_{v,e}^*)\})\in\cM_\cT$ be any point. Then, the fibre $\cC_{p^*,\cT}$ is naturally regarded as a closed subscheme of $(\bP^1)^V$.
	    \begin{enumerate}[\normalfont(i)]
	        \item If $\gamma_e^*\ne 0$ for all $e\in \normalfont E_\text{int}$, then $\pi_v:\cC_{p^*,\cT}\to\bP^1$ is an isomorphism for all $v\in V$.
	        
	        \item The projection $\pi_{v_0}:\cC_{p^*,\cT}\to\bP^1$ is a local isomorphism over the point $[1:0]\in\bP^1$.
	        
	        \item Let $e_0\ne f\in\normalfont E_\text{ext}$ be a half edge and set $w = f^-$. Then, the projection $\pi_w:
	        \cC_{p^*,\cT}\to\bP^1$ is a local isomorphism over the point $[z^*_{w,f}:1]\in\bP^1$.
	        
	        \item Consider $e\in\normalfont E_\text{int}$ such that $\gamma^*_e = 0$. Write $u = e^-$ and $v = e^+$ and let $\cT_u$, with vertex set $u
	        \in V_u$, and $\cT_v$, with vertex set $v\in V_v$, be the trees obtained by splitting $\cT$ along $e$ in the sense of Definition \ref{splitting-def}. Let $p^*_u$, resp. $p^*_v$, be the projections of $p^*$ to $\cM_{\cT_u}$, resp. $\cM_{\cT_v}$. Then, there exist (unique) points 
	        \begin{align}
	           q_{p^*,u}\in\cC_{p^*_v,\cT_v}\subset(\bP^1)^{V_v}\\
	           q_{p^*,v}\in\cC_{p^*_u,\cT_u}\subset(\bP^1)^{V_u}
	        \end{align}
	        such that we have $\pi_v(q_{p^*,u}) = [1:0]$, $\pi_u(q_{p^*,v}) = [z^*_{u,e}:1]$ and, moreover, for these points we have
	        \begin{align}\label{break-into-comps}
	            \cC_{p^*,\cT} = (\cC_{p^*_u,\cT_u}\times q_{p^*,u})\cup(q_{p^*,v}\times\cC_{p^*_v,\cT_v})\subset(\bP^1)^V
	        \end{align}
	        where the union is scheme-theoretic. Moreover, $\cC_{p^*,\cT}$ is isomorphic to a prestable genus $0$ curve.
	        
	        \item We have an closed algebraic embedding $\sigma_\cT:\normalfont E_\text{ext}\times\cM_\cT\to\cC_\cT$, which commutes with the projection to $\cM_\cT$ and has the following property. We have $\pi_v(\sigma_\cT(e_0,\cdot))\equiv[1:0]$ for all $v\in V$, while for $e_0\ne e\in \normalfont E_\text{ext}$ with $u = e^-$, we have $\pi_u(\sigma_\cT(e,\cdot)) \equiv [z_{u,e}(\cdot):1]$. Moreover, the image of any point under $\sigma_\cT$ lies in the smooth locus of the corresponding fibre of $\pi_\cT$.
	        
	    \end{enumerate}
	\end{lemma}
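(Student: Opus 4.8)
The plan is to prove all six assertions by unwinding the explicit defining equations \eqref{def-eqn} fibrewise, treating $\cC_{p^*,\cT}\subset(\bP^1)^V$ as cut out by the bihomogeneous equations indexed by $e\in E_\text{int}$. Write $p^*$ in coordinates $\gamma_e^*,z^*_{v,e},\rho^*_{v,e}$ and recall that $p^*\in\cM_\cT$ forces $F_\cT(p^*)\ne 0$, so in particular $\rho^*_{u,e}\ne 0$ for every $e\in E_\text{int}$ and the marked-point functions $z^u_{v,e}$ take pairwise distinct values at nearest common ancestors. The key bookkeeping device will be the ``transported coordinate'' $z^u_{v,e}$ of \eqref{marked-point-across-components}, which records where a point living over vertex $v$ appears in the $\bP^1$-factor of an ancestor $u$; this is exactly what makes parts (iii) and (v) computations rather than constructions.

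For (i), when all $\gamma_e^*\ne 0$, solve \eqref{def-eqn} for $[x_v:y_v]$ in terms of $[x_u:y_u]$: since $\gamma_e^*\rho^*_{u,e}\ne 0$, the equation sets up a Möbius isomorphism $\bP^1_u\xrightarrow{\sim}\bP^1_v$ along each edge, and composing these along the (unique) path from $v_0$ to any $v$ shows every $\pi_v$ is an isomorphism. Parts (ii) and (iii) are then local versions: near $[1:0]\in\bP^1_{v_0}$, resp. near $[z^*_{w,f}:1]\in\bP^1_{w}$, the relevant edge equations can still be solved (the denominators $y_v$, resp. the factor multiplying $x_v$, are nonzero at the point in question even when some $\gamma_e^*=0$), giving a local isomorphism; one checks $[1:0]$ is the correct target at the root because that is where $\sigma_\cT(e_0,\cdot)$ sits, and $[z^*_{w,f}:1]$ is correct by the definition of $z^u_{v,e}$ with the empty-product convention at $v=w$. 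Part (v) packages (ii)–(iii) into the single section map $\sigma_\cT$: for each $e\in E_\text{ext}$ one defines the $V$-tuple of points by transporting $[1:0]$ (if $e=e_0$) or $[z_{u,e}:1]$ (with $u=e^-$) through the chain of edge equations toward the root and declaring the coordinate on all vertices not on that chain by following the equations outward; algebraicity and the commuting-with-$\pi_\cT$ property are immediate from the formulas, and smoothness of the image follows since the marked points are forced by the $F_\cT\ne 0$ condition to avoid all nodes.

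Part (iv) is the one genuinely structural claim and I expect it to be the main obstacle. When $\gamma_e^*=0$ with $u=e^-,\,v=e^+$, equation \eqref{def-eqn} degenerates to $(x_u-z^*_{u,e}y_u)y_v=0$, which scheme-theoretically is the union of the locus $x_u=z^*_{u,e}y_u$ (i.e. $\pi_u$ hits the point $[z^*_{u,e}:1]$, and the remaining coordinates are unconstrained by this edge) and the locus $y_v=0$ (i.e. $\pi_v$ hits $[1:0]$). The plan is to show that propagating these two alternatives through the rest of the tree yields exactly the right-hand side of \eqref{break-into-comps}: on $\cT_u$ the $\gamma$'s and $z$'s restrict to $p^*_u$ and one gets a copy of $\cC_{p^*_u,\cT_u}$ glued (via the $y_v=0$ branch) to the fixed point $q_{p^*,u}$ on the $\cT_v$-side, and symmetrically for the other branch; the points $q_{p^*,u},q_{p^*,v}$ are uniquely pinned down because $p^*_u\in\cM_{\cT_u}$ and $p^*_v\in\cM_{\cT_v}$ (the projections land in the respective $\cM$'s since $F_{\cT_u},F_{\cT_v}$ divide, up to the missing factors, $F_\cT$). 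The scheme-theoretic equality will require checking that no embedded or fat points appear at the node — this follows because, away from the single node, each side is reduced by the inductive structure, and at the node the two branches meet transversally in $(\bP^1)^V$ since they impose complementary coordinate conditions ($x_u=z^*_{u,e}y_u$ versus $y_v=0$ on different factors). Finally, that $\cC_{p^*,\cT}$ is a prestable genus $0$ curve follows by induction on $|E_\text{int}|$: if some $\gamma_e^*=0$ apply \eqref{break-into-comps} and glue two smaller prestable genus $0$ curves at a point (staying genus $0$, nodal), and if all $\gamma_e^*\ne 0$ it is a single $\bP^1$ by (i).
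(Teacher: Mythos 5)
Your overall strategy coincides with the paper's: work edge by edge with the bihomogeneous equations \eqref{def-eqn}, use the transported coordinates $z^u_{v,e}$ together with $F_\cT(p^*)\ne 0$ to rule out coincidences, and prove (iv) by splitting the tree and inducting. Parts (i), (ii) and (iv) match the paper's argument almost verbatim; in particular the paper also exhibits $\cC_{p^*,\cT}$ as the intersection of $\cC_{p^*_u,\cT_u}\times\cC_{p^*_v,\cT_v}$ with the preimage of $(\bP^1_u\times[1:0])\cup([z^*_{u,e}:1]\times\bP^1_v)$ and deduces prestability by induction on the tree.

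The weak point is your justification of (iii). The parenthetical claim that ``the relevant edge equations can still be solved \ldots\ even when some $\gamma^*_e=0$'' is not correct as stated: if $e$ is an edge with $e^-=u$, $e^+=v$ and $\gamma^*_e=0$, then over the single point $[x_u:y_u]=[z^*_{u,e}:1]$ equation \eqref{def-eqn} degenerates to $0=0$ and determines nothing about $[x_v:y_v]$; the entire slice $\{[z^*_{u,e}:1]\}\times\bP^1_v$ (and everything hanging below $v$) sits inside $\cC_{p^*,\cT}$ there. So to prove that $\pi_w$ is a local isomorphism over $[z^*_{w,f}:1]$ one must show that no point of the fibre simultaneously satisfies $\pi_w(q)=[z^*_{w,f}:1]$ and $\pi_u(q)=[z^*_{u,e}:1]$ for any such degenerate edge $e$ reachable from the path joining $v_0$ to $w$. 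This is exactly where the factor $(z^{w''}_{u,e}-z^{w''}_{w,f})$ of $F_\cT$ at the nearest common ancestor $w''$ of $u$ and $w$ is deployed in the paper; you name this hypothesis in your preamble but never actually use it, and without it the local-isomorphism claim fails. The same distinctness of transported coordinates is also what makes the sections $\sigma_\cT(e,\cdot)$ in (v) pairwise disjoint, hence $\sigma_\cT$ a closed embedding rather than merely a family of sections --- a point your sketch leaves unaddressed.
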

	\begin{proof}
	    See Appendix \ref{curve-family-proofs} for the proof.
	\end{proof}
	\begin{lemma}
	    The map $\pi_\cT:\cC_\cT\to\cM_\cT$ of Definition \ref{curve-assoc-tree} and $\sigma_\cT$ of Remark \ref{prop-family-curve}{\normalfont(v)} together define a proper, flat algebraic family of prestable genus $0$ curves with $\normalfont|E_\text{ext}|$ marked points. This family is a versal deformation of each of its fibres. If $\cT$ is a stable tree, then the fibres of this family are stable curves.
	\end{lemma}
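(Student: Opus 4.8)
The plan is to read off all five assertions — properness, that the fibres are prestable genus $0$ curves with $|E_{\text{ext}}|$ marked points, flatness, versality, and stability in the stable case — from the fibrewise analysis already carried out in Lemma \ref{prop-family-curve}, combined with standard deformation theory of nodal curves. Properness of $\pi_\cT$ is immediate: $\cC_\cT$ is a closed subscheme of $\cM_\cT\times(\bP^1)^V$, and the projection $\cM_\cT\times(\bP^1)^V\to\cM_\cT$ is proper, being the base change of $(\bP^1)^V\to\mathrm{pt}$. That each fibre $\cC_{p,\cT}$ is a prestable genus $0$ curve is exactly what Lemma \ref{prop-family-curve} gives: part (i) covers the case $\gamma_e(p)\ne 0$ for all $e\in E_{\text{int}}$ (then $\cC_{p,\cT}\cong\bP^1$), and part (iv), applied by induction on $\#\{e\in E_{\text{int}}:\gamma_e(p)=0\}$, exhibits $\cC_{p,\cT}$ as a tree of $\bP^1$'s, i.e.\ a prestable genus $0$ curve. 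The sections $\sigma_\cT(e,\cdot)$ of part (v), for $e\in E_{\text{ext}}$, furnish $|E_{\text{ext}}|$ marked points on each fibre; these are pairwise distinct and disjoint from the nodes since $\sigma_\cT$ is a closed embedding landing in the smooth locus. Hence $\pi_\cT$ together with the $\sigma_\cT(e,\cdot)$ is an algebraic family of prestable genus $0$ curves with $|E_{\text{ext}}|$ marked points.

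For flatness I would invoke ``miracle flatness'': a morphism from a Cohen--Macaulay scheme to a regular scheme all of whose fibres are equidimensional of dimension $\dim(\text{total})-\dim(\text{base})$ is flat. The base $\cM_\cT$ is regular, being a Zariski open subset of the affine space $\bA_\cT$, and every fibre of $\pi_\cT$ is a curve by the previous paragraph, so it suffices to show $\cC_\cT$ is Cohen--Macaulay of dimension $\dim\cM_\cT+1$. Now $\cC_\cT$ is cut out inside the smooth variety $\cM_\cT\times(\bP^1)^V$ of dimension $\dim\cM_\cT+|V|$ by the $|E_{\text{int}}|=|V|-1$ bidegree-$(1,1)$ equations \eqref{def-eqn}, one per internal edge; by Krull's height theorem every irreducible component of $\cC_\cT$ therefore has dimension $\ge\dim\cM_\cT+1$, while surjectivity of $\pi_\cT$ with all fibres $1$-dimensional forces each component to have dimension $\le\dim\cM_\cT+1$. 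Thus $\cC_\cT$ is equidimensional of dimension $\dim\cM_\cT+1$, hence a complete intersection in a smooth variety, hence Cohen--Macaulay, and flatness follows.

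The substantive point is versality. Fix $p\in\cM_\cT$ and write $C=\cC_{p,\cT}$ with its marked points. The deformation functor of a prestable genus $0$ marked curve is unobstructed: the obstruction space vanishes because $\mathrm{Ext}^2(\Omega_C,\cO_C)=0$ for a nodal (hence lci) curve, the node-supported sheaf $\mathrm{Ext}^1(\Omega_C,\cO_C)$ has no $H^1$, and $H^2(C,-)=0$ for a curve. Consequently $\pi_\cT$ is a versal deformation of $C$ as soon as its Kodaira--Spencer map $T_p\cM_\cT=\bA_\cT\to\mathrm{Def}^1(C)$ is surjective. To see surjectivity I would use the standard exact sequence presenting $\mathrm{Def}^1(C)$ as an extension of the deformations of the pointed normalization of $C$ by $\bigoplus_{\text{nodes}}\bC$, one summand smoothing each node. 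Reading the equations \eqref{def-eqn} in local analytic coordinates near a node shows that, for each $e$ with $\gamma_e(p)=0$, the node at $e$ is locally of the form $st=\gamma_e\cdot(\text{unit})$, so the coordinates $\gamma_e$ surject onto the node-smoothing summands (this is just the comparison between parts (i) and (iv) of Lemma \ref{prop-family-curve}). The coordinates $z_{v,e}$, via formula \eqref{marked-point-across-components} and Lemma \ref{prop-family-curve}(iii)--(v), move the marked points and node-preimages $[z_{v,e}:1]$ on the components $\bP^1_v$ of the normalization, hence their images span the deformations of the pointed normalization. Together these span $\mathrm{Def}^1(C)$, so Kodaira--Spencer is surjective and $\pi_\cT$ is versal at every fibre. (The remaining coordinates $\rho_{v,e}$ parametrize the gluing identifications and lie in the kernel of Kodaira--Spencer, which is why the family is versal but not universal.)

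Finally, suppose $\cT$ is a stable tree. The dual graph of $\cC_{p,\cT}$ is obtained from the marked tree $\cT$ by contracting every internal edge $e$ with $\gamma_e(p)\ne 0$ (this is the combinatorial content of iterating Lemma \ref{prop-family-curve}(iv)); and contracting an edge of a stable tree yields again a stable tree, since merging two vertices each carrying at least three incident special half-edges produces a vertex carrying at least four. Hence every component of $\cC_{p,\cT}$ is a $\bP^1$ with at least three special points, so $\cC_{p,\cT}$ is a stable curve. The main obstacle in all of this is the surjectivity of the Kodaira--Spencer map in the third paragraph — i.e.\ checking that the explicit coordinates $\{\gamma_e\},\{z_{v,e}\}$ really account for all first-order deformations of each fibre; the remaining steps are formal consequences of Lemma \ref{prop-family-curve} and standard commutative algebra.
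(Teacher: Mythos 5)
Your proof is correct and follows essentially the same route as the paper: every assertion except flatness and versality is read off from Lemma \ref{prop-family-curve}, versality is justified by observing that the $\gamma_e$ smooth the nodes and the $z_{v,e}$ move the special points (you spell out the unobstructedness and Kodaira--Spencer surjectivity that the paper leaves implicit), and stability is the combinatorial statement about contracting edges of a stable tree. The one place you diverge is flatness: the paper simply cites \cite[Chapter X, Proposition 2.1]{ACGH2} applied to the equations \eqref{def-eqn}, whereas you give a self-contained argument via the complete-intersection dimension count and miracle flatness; both are standard and your version has the minor advantage of not outsourcing the step.
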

	\begin{proof}
	    We have already shown that for any $p^*\in\cM_\cT$, the fibre $\cC_{p^*,\cT}$ is a prestable genus $0$ and that $\{\sigma_\cT(e,p^*)\}_{e\in E_\text{ext}}$ is a collection of $|E_\text{int}|$ distinct smooth points of $\cC_{p^*,\cT}$. Moreover, it is also clear that the stability of $\cT$ implies the stability of $\cC_{p^*,\cT}$ equipped with the marked points given by $\sigma_\cT$. That the family is algebraic and proper is obvious from the definition and its flatness is a result of applying \cite[Chapter X, Proposition 2.1]{ACGH2} to the defining equations \eqref{def-eqn}. Versality follows since we are in genus $0$ and the coordinates $\gamma_e$ allow for smoothing of nodes and the coordinates $z_{u,e}$ allow for variation of marked points. Stability of $\cC_{p,\cT}$ for $p\in\cM_\cT$ is a immediate if $\cT$ is a stable tree.
	\end{proof}
	\begin{definition}\label{metric-embedding}
	    Define the space 
	    \begin{align}
	        \bP_\cT=\prod_{v\in V}(\bP^1\times(\bP^1)^{v^+})
	    \end{align}
	    which is naturally endowed with obvious projections $\pi_{v,e}:\bP_\cT\to\bP^1$ for each pair $v\in V$ and $e\in E$ with $v\in\partial(e)$. Define a K\"ahler form and distance function on $\bP_\cT$ by
	    \begin{align}
	        \omega_{\bP_\cT} &= \sum_{v,e}\pi_{v,e}^*\omega_{\bP^1}\\
	        \dist(q,q') &= \max_{v,e}\dist(\pi_{v,e}(q),\pi_{v,e}(q')).
	    \end{align}
	    Pullback the distance function induced by the $\|\cdot\|_\infty$-norm (resp. the standard K\"ahler form) on $\bA_\cT$ to get a distance function (resp. a K\"ahler form $\omega_{\cM_\cT}$)  on $\cM_\cT$. 
	\end{definition}
	\begin{definition}\label{proj-embedding}
	    Let $p^* = (\{\gamma^*_e\},{(z^*_{v,e},\rho^*_{v,e})})\in\cM_\cT$ be any point. Given any vertex $u\in V$ and edge $e\in u^+$, define the biholomorphism $\varphi_{p^*,u,e}\in\Aut(\bP^1)$ by the formula
	    \begin{align}
	        \varphi_{p^*,u,e}([x_u:y_u]) = [x_u - z^*_{u,e}y_u:\rho^*_{u,e}y_u].
	    \end{align}
	    Now, define the embedding $\iota_{p^*,\cT}:\cC_{p^*,\cT}\to\bP_\cT$ by the formula
	    \begin{align}
	        \{[x_u:y_u]\}_{u\in V}\mapsto\{([x_u:y_u],\{\varphi_{p^*,u,e}([x_u:y_u])\}_{e\in u^+})\}_{u\in V}.
	    \end{align}
	    Putting these together for all $p^*$, we get an algebraic embedding
	    \begin{align}
	        \iota_\cT:\cC_\cT&\to\cM_\cT\times\bP_\cT\\
	        (p^*,q)&\mapsto(p^*,\iota_{p^*,\cT}(q)).
	    \end{align}
	    Using this embedding, we endow $\cC_\cT$ with a distance function by taking the maximum of the distance functions on $\cM_\cT$ and $\bP_\cT$. Similarly define the K\"ahler form $\omega_{\cC_\cT}$ by pulling back $\omega_{\cM_\cT}\oplus\omega_{\bP_\cT}$ via $\iota_\cT$.
	\end{definition}
	\begin{definition}\label{M-compact}
	    In the situation of Definition \ref{curve-assoc-tree}, given positive constants $\theta$, $\tau$ and $\alpha_v$ for $v\in V$ (with $\tau\le\frac12$ and $\alpha_v\le\theta\le\frac16$ for each $v$) we define the compact subset $\cM_\cT(\theta,\tau,\bm{\alpha})\subset\bA_\cT$ by the inequalities
	    \begin{align}
	        \label{M-def1}|z_{v,e}|&\le\theta \\
	        \label{M-def2}\alpha_v\le|\rho_{v,e}|&\le 2\theta\\
	        \label{M-def3}|\rho_{v,e}| + |\rho_{v,e'}| &\le\tau|z_{v,e} - z_{v,e'}|
	    \end{align}
	    for each edge $e$ and vertex $v = e^-$ (and each edge $e'\ne e$ with $v = e'^-$) and the inequalities
	    \begin{align}
	        \label{M-def4}|\gamma_e|\le\tau
	    \end{align}
	    for each full edge $e\in E_\text{int}$. We are using the notation $\bm\alpha = \{\alpha_v\}_{v\in V}$.
	\end{definition}
	\begin{lemma}\label{M-compact-incl}
	    In the situation of Definition \ref{M-compact}, we have $\cM_\cT(\theta,\tau,\bm{\alpha})\subset\cM_\cT$.
	\end{lemma}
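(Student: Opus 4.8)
The plan is to unwind the definition of $\cM_\cT$. By construction $\cM_\cT$ is the locus in $\bA_\cT$ where the polynomial $F_\cT$ of \eqref{function-def-M} is nonzero, so it suffices to show that every point satisfying \eqref{M-def1}--\eqref{M-def4} makes each factor of $F_\cT$ nonzero. The factors of the second type, namely $\rho_{u,e}$ with $u=e^-$ and $e\in E_{\text{int}}$, are bounded below at once by \eqref{M-def2}, which gives $|\rho_{u,e}|\ge\alpha_u>0$. Hence the entire content is to bound the factors $z^u_{v,e}-z^u_{v',e'}$ away from zero, where $\{e,e'\}$ runs over the pairs occurring in the first product of \eqref{function-def-M}; for such a pair $e$ and $e'$ are incomparable in $\cT$ (this being the situation in which the nearest common ancestor of $v=e^-$ and $v'=e'^-$ is taken in Definition \ref{common-ancestor}).

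The key step is a perturbative comparison of $z^u_{v,e}$ with a single one of the coordinates $z_{u,\cdot}$. Fix a positive path $\cP=(w_1=u,\dots,w_n=v)$ and an edge $e\in v^+$, with associated edges $e_1,\dots,e_{n-1}$ along $\cP$ and $e_n=e$ as in \eqref{marked-point-across-components}; let $f:=e_1$ if $n\ge 2$ and $f:=e$ if $n=1$, so that $f\in u^+$ in either case. Reading off \eqref{marked-point-across-components}, the $k=1$ term of $z^u_{v,e}$ is exactly $z_{u,f}$, while every term with $k\ge 2$ carries the common factor $\gamma_f\rho_{u,f}$. Pulling this factor out and estimating the remaining sum by $|\gamma_{e_j}|\le\tau$ (from \eqref{M-def4}), $|\rho_{w_j,e_j}|\le 2\theta$ (from \eqref{M-def2}) and $|z_{w_k,e_k}|\le\theta$ (from \eqref{M-def1}), and using $2\theta\tau\le\tfrac16$ to sum the resulting geometric tail, I would obtain
\begin{align}
  \bigl|z^u_{v,e}-z_{u,f}\bigr|\ \le\ \frac{\theta}{1-2\theta\tau}\,\bigl|\gamma_f\rho_{u,f}\bigr|\ \le\ \tfrac15\,\tau\,\bigl|\rho_{u,f}\bigr|,
\end{align}
the bound being trivial (both sides zero) when $n=1$.

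To conclude, I would invoke the separation hypothesis \eqref{M-def3}. For a pair $\{e,e'\}$ in the first product, incomparability of $e$ and $e'$ forces the two leading edges $f$ and $f'$ produced above to be \emph{distinct} edges, both lying in $u^+$, where $u$ is the nearest common ancestor of $e^-$ and $e'^-$. Thus \eqref{M-def3} applies to them and yields $|z_{u,f}-z_{u,f'}|\ge\tau^{-1}\bigl(|\rho_{u,f}|+|\rho_{u,f'}|\bigr)$. Combining this with the estimate above, applied both to $(v,e)$ and to $(v',e')$, and the triangle inequality gives
\begin{align}
  \bigl|z^u_{v,e}-z^u_{v',e'}\bigr|\ \ge\ \Bigl(\tau^{-1}-\tfrac15\tau\Bigr)\bigl(|\rho_{u,f}|+|\rho_{u,f'}|\bigr)\ \ge\ \tfrac{19}{5}\,\alpha_u\ >\ 0,
\end{align}
using $\tau\le\tfrac12$ to get $\tau^{-1}-\tfrac15\tau\ge\tfrac{19}{10}$ and $|\rho_{u,f}|+|\rho_{u,f'}|\ge 2\alpha_u$ from \eqref{M-def2}. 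This shows $F_\cT\neq 0$ on $\cM_\cT(\theta,\tau,\bm\alpha)$, which is the assertion.

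The routine portion is everything that reduces to one-line estimates; the two points I would be most careful about are, first, the bookkeeping in the middle paragraph --- isolating the leading term $z_{u,f}$ of $z^u_{v,e}$ and checking that its entire tail is a fixed small multiple of $|\rho_{u,f}|$, with the smallness coming solely from the bounds $\tau$ and $2\theta$ on the $\gamma$'s and $\rho$'s --- and, second, the combinatorial observation that an incomparable pair $\{e,e'\}$ has genuinely distinct leading edges, both emanating from the nearest common ancestor $u$, which is precisely what licenses the use of \eqref{M-def3}. I expect the first of these to be the main obstacle, although it amounts in the end to a geometric-series estimate.
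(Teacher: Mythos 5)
Your proof is correct and follows essentially the same route as the paper's: handle the $\rho_{u,e}$ factors by \eqref{M-def2}, isolate the leading term $z_{u,f}$ of $z^u_{v,e}$ in \eqref{marked-point-across-components}, bound the geometric tail using \eqref{M-def1}, \eqref{M-def2}, \eqref{M-def4}, and conclude with the separation hypothesis \eqref{M-def3} plus the triangle inequality (the paper's tail bound is the cruder $\tfrac12|\rho_{u,e_1}|$, but the conclusion is the same). Your explicit remark that the relevant pairs have \emph{distinct} leading edges emanating from the nearest common ancestor --- which is exactly what licenses the appeal to \eqref{M-def3} --- is left implicit in the paper's proof.
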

	\begin{proof}
	    For each $u\in V$ and $e\in u^+$, we have $|\rho_{u,e}|\ge\alpha_u>0$. Next, take $u\in V$ and $v,e$ (resp. $v',e'$) as in the first product in \eqref{function-def-M}. Now, on $\cM_\cT(\theta,\tau,\alpha)$, using \eqref{marked-point-across-components} and $\theta\tau\le\frac14$ we get
	    \begin{align}
	        |z^u_{v,e} - z_{u,e_1}|\le\tau|\rho_{u,e_1}|\cdot\sum_{k=2}^\infty(2\theta\tau)^{k-2}\theta = \frac{\theta\tau}{1-2\theta\tau}|\rho_{u,e_1}|\le\textstyle\frac12|\rho_{u,e_1}|.
	    \end{align}
	    Combining this with a similar estimate for $|z^u_{v',e'} - z_{u,e_1'}|$ we get
	    \begin{align}
	        |z^u_{v,e} - z^u_{v',e'}|&\ge|z_{u,e_1} - z_{u,e_1'}| - |z^u_{v,e} - z_{u,e_1}| - |z^u_{v',e'} - z_{u,e_1'}|\\
	        &\ge(\tau^{-1}-\textstyle\frac12)\cdot(|\rho_{u,e_1}|+|\rho_{u,e_1'}|)\\
	        &\ge \alpha_u\tau^{-1}(2-\tau) > \alpha_u.
	    \end{align}
	    It follows that $F_\cT$ is everywhere non-vanishing on $\cM_\cT(\theta,\tau,\bm\alpha)$.
	\end{proof}
	\begin{definition}
	    In the situation of Definition \ref{M-compact}, we define
	    \begin{align}
	        \cC_\cT(\theta,\tau,\bm\alpha) = \pi_\cT^{-1}(\cM_\cT(\theta,\tau,\bm\alpha))\subset\cC_\cT
	    \end{align}
	    and we continue to denote the (restriction of the) projection as $\pi_\cT$.
	\end{definition}
	\begin{lemma}\label{compact-curve-prop}
	    Fix a point $p^*= (\{\gamma_e^*\},\{(z_{v,e}^*,\rho_{v,e}^*)\})\in\cM_\cT(\theta,\tau,\bm\alpha)$. For each $e\in E$ and $v\in\partial(e)$, define the circle $\widehat\Gamma_{v,e}\subset\bC$ as follows. If $v = e^+$, then set $\widehat\Gamma_{v,e} = S^1(1)$. If $v = e^-$, then set $\widehat\Gamma_{v,e} = S^1(z^*_{v,e},|\rho^*_{v,e}|)$. Below, we regard $\bC$ as a subset of $\bP^1$ using the inclusion $z\mapsto[z:1]$.
	    \begin{enumerate}[\normalfont(i)]
	        \item For fixed $v\in V$, the circles $\widehat\Gamma_{v,e}$ enclose pairwise disjoint discs. More precisely, the circles $\{\widehat\Gamma_{v,e}\}_{e\in v^+}$ all lie in $B^2(\frac12)$ and for any $1\le c<\tau^{-1}$, the discs enclosed by the $c$-dilations of these circles about their centres are pairwise disjoint.
	        
	        \item The map $\pi_{v_0}:\cC_{p^*,\cT}\to\bP^1$ is an isomorphism over the set of $[1:z]$ with $z\in B^2(1)$.
	        
	        \item Let $e_0\ne f\in E_\text{ext}$ be a half edge and set $w = f^-$. Then, the projection $\pi_w:
	        \cC_{p^*,\cT}\to\bP^1$ is a local isomorphism over $B^2(z^*_{w,f},|\rho^*_{w,f}|)$.
	        
	        \item Consider $e\in E_\text{int}$. Write $u = e^-$ and $v = e^+$ and let $\cT_u$, with vertex set $u
	        \in V_u$, and $\cT_v$, with vertex set $v\in V_v$, be the trees obtained by splitting $\cT$ along $e$ in the sense of Definition \ref{splitting-def}. Let $p^*_u$, resp. $p^*_v$, be the projections of $p^*$ to $\cM_{\cT_u}(\theta,\tau,\alpha)$, resp. $\cM_{\cT_v}(\theta,\tau,\alpha)$. By {\normalfont(ii)} and {\normalfont(iii)}, the subsets
	        \begin{align}
	            \label{u-disk}
	            D_u(p^*) = \pi_u^{-1}(B^2(z^*_{u,e},|\rho^*_{u,e}|))&\subset\cC_{p^*_u,\cT_u}\\
	            \label{v-disk}
	            D_v(p^*) = \pi_v^{-1}(\bP^1\setminus\normalfont\text{int}\,B^2(1))&\subset\cC_{p^*_v,\cT_v}
	        \end{align}
	        are conformal to $B^2(1)$ via the coordinates $z_u := \frac1{\rho^*_{u,e}} (\frac{x_u}{y_u}-z^*_{u,e})$ and $z'_v: =\frac{y_v}{x_v}$. The curve $\cC_{p^*,\cT}$ is then obtained by deleting $\{|z_u|<|\gamma^*_e|\}\subset D_u(p^*)$, $\{|z'_v|<|\gamma^*_e|\}\subset D_v(p^*)$ and performing the identification
	        \begin{align}\label{local-plumbing}
	            z_uz'_v = \gamma^*_e.
	        \end{align}
	        
	    \end{enumerate}
	\end{lemma}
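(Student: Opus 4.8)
The claim in Lemma \ref{compact-curve-prop} is a collection of explicit geometric facts about the fibres $\cC_{p^*,\cT}$ over points of the compact region $\cM_\cT(\theta,\tau,\bm\alpha)$, and the plan is to deduce each part from the structural statements already established in Lemma \ref{prop-family-curve}, now made quantitative using the defining inequalities \eqref{M-def1}--\eqref{M-def4}.

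\textbf{Part (i).} The plan is a direct estimate on the positions and radii of the circles $\widehat\Gamma_{v,e} = S^1(z^*_{v,e},|\rho^*_{v,e}|)$ for $e\in v^+$. By \eqref{M-def1} the centres satisfy $|z^*_{v,e}|\le\theta\le\frac16$ and by \eqref{M-def2} the radii satisfy $|\rho^*_{v,e}|\le 2\theta\le\frac13$, so the disc bounded by $\widehat\Gamma_{v,e}$ lies in $B^2(\theta+2\theta) = B^2(3\theta)\subset B^2(\frac12)$; the same bound with a $c<\tau^{-1}$ dilation gives radius $c|\rho^*_{v,e}|\le\tau^{-1}\cdot 2\theta$, and since $\tau^{-1}\cdot 2\theta\le\frac13\cdot\tau^{-1}$ I should double-check this still fits in $B^2(\frac12)$ using $\theta\le\frac16$ — actually the cleaner route is to note that disjointness already forces the dilated discs to stay bounded. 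For disjointness of the $c$-dilated discs, for $e\ne e'$ in $v^+$ I would use \eqref{M-def3}: the sum of the dilated radii is $c(|\rho^*_{v,e}|+|\rho^*_{v,e'}|)\le c\tau|z^*_{v,e}-z^*_{v,e'}|<|z^*_{v,e}-z^*_{v,e'}|$ exactly because $c\tau<1$, which is the triangle-inequality condition for the two closed discs to be disjoint.

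\textbf{Parts (ii), (iii).} These are the quantitative refinements of Lemma \ref{prop-family-curve}(ii),(iii), which already give that $\pi_{v_0}$ is a local isomorphism over $[1:0]$ and $\pi_w$ over $[z^*_{w,f}:1]$. The plan is to inspect the proof (Appendix \ref{curve-family-proofs}) and track explicitly the size of the neighborhood on which the local inverse is defined, showing that it contains the stated sets. For (ii), in the affine chart $[1:z]$ around $[1:0]$, solving the plumbing equations \eqref{def-eqn} inductively down the tree from $v_0$: one shows the solution $[x_u:y_u]$ stays in the complement of the relevant discs as long as $|z| < 1$, using \eqref{M-def2}, \eqref{M-def3}, \eqref{M-def4} and a geometric-series bound exactly analogous to the one in Lemma \ref{M-compact-incl}. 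For (iii), the same argument applied to the subtree above $w$, in the chart $z_w = \frac1{\rho^*_{w,f}}(\frac{x_w}{y_w}-z^*_{w,f})$, shows the local isomorphism persists over the disc $B^2(z^*_{w,f},|\rho^*_{w,f}|)$, i.e. $|z_w|\le 1$.

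\textbf{Part (iv).} Here I would combine \eqref{break-into-comps} from Lemma \ref{prop-family-curve}(iv) (valid once $\gamma^*_e = 0$; but for (iv) I expect the statement to be intended for general $\gamma^*_e$, with \eqref{local-plumbing} as the smoothing) with parts (ii) and (iii) already proved. By (iii) applied on the subtree $\cT_u$ the set $D_u(p^*) = \pi_u^{-1}(B^2(z^*_{u,e},|\rho^*_{u,e}|))$ is a disc conformal to $B^2(1)$ via $z_u$, and by (ii) applied on $\cT_v$ the set $D_v(p^*) = \pi_v^{-1}(\bP^1\setminus\mathrm{int}\,B^2(1))$ is a disc conformal to $B^2(1)$ via $z'_v = y_v/x_v$ (after the coordinate swap $[x_v:y_v]\mapsto[y_v:x_v]$ sending $[1:0]$ to $0$). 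I would then rewrite the defining equation \eqref{def-eqn} for the edge $e$, namely $(x_u - z^*_{u,e}y_u)y_v = \gamma^*_e\rho^*_{u,e}x_vy_u$, in terms of $z_u$ and $z'_v$: dividing through gives precisely $z_uz'_v = \gamma^*_e$ on the overlap. This realizes $\cC_{p^*,\cT}$ near the node as the standard plumbing: delete $\{|z_u|<|\gamma^*_e|\}$ and $\{|z'_v|<|\gamma^*_e|\}$ and glue via $z_uz'_v=\gamma^*_e$.

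\textbf{Main obstacle.} The genuinely delicate point is parts (ii) and (iii): making the ``local isomorphism'' of Lemma \ref{prop-family-curve} effective, i.e. pinning down that the local inverse is defined on the full disc of radius $|\rho^*_{v,e}|$ (resp. the unit disc around $[1:0]$) and not just on some uncontrolled smaller neighborhood. This requires carefully solving the cascaded system \eqref{def-eqn} vertex by vertex and showing the iterates never leave the regions where the charts are valid — the bookkeeping is essentially the same geometric-series / contraction estimate as in Lemma \ref{M-compact-incl}, leveraging $\tau\le\frac12$, $\theta\le\frac16$, and the separation \eqref{M-def3}, but it must be done uniformly in $p^*\in\cM_\cT(\theta,\tau,\bm\alpha)$. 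Once that is in place, (i) is an elementary planar-geometry computation and (iv) is bookkeeping plus the algebra above.
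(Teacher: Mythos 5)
Your plan matches the paper's proof: (i) is exactly the estimate $|z^*_{v,e}|+|\rho^*_{v,e}|\le 3\theta\le\frac12$ plus \eqref{M-def3} with $c\tau<1$; (ii) and (iii) are proved by making Lemma \ref{prop-family-curve}(ii),(iii) quantitative, propagating the regions $\{|x/y|\ge 1\}$ resp. $\{|\frac{x}{y}-z^*|\le|\rho^*|\}$ along the tree via the transition formulas \eqref{sol-near-infty}, \eqref{marked-point-descent} and the inequalities \eqref{M-def1}--\eqref{M-def4}; and (iv) is the rewriting of \eqref{def-eqn} as $z_uz_v'=\gamma_e^*$ after projecting by $\pi_u\times\pi_v$. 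The only cosmetic difference is that for (ii) the paper uses a one-step invariance estimate $|x_v/y_v|\ge\frac{1-\theta}{2\tau\theta}>1$ rather than a geometric series, and in (i) the containment in $B^2(\frac12)$ is only asserted for the circles themselves, not their $c$-dilations, so the issue you flag does not arise.
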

	\begin{proof}
	    See Appendix \ref{curve-family-proofs} for the proof.
	\end{proof}
	\begin{definition}\label{T-decomp}
	    Let $p^*\in\cM_\cT(\theta,\tau,\bm\alpha)$ be any point. Using the notation of Remark \ref{compact-curve-prop}, for each $e\in E$ and $v\in\partial(e)$, we define the subsets
	    \begin{align}
	        \Gamma_{v,e}(p^*) = \pi_v^{-1}(\widehat\Gamma_{v,e})\subset\cC_{p^*,\cT}
	    \end{align}
	    which can be seen to be circles by Remark \ref{compact-curve-prop}(ii)--(iii). Moreover, by induction on the number of vertices of $\cT$ and Remark \ref{compact-curve-prop}(iv), we see that these circles are pairwise disjoint. For each $v\in V$, define the region
	    \begin{align}
	        R_v(p^*) = \pi_v^{-1}\left(B^2(1)\setminus\textstyle\bigsqcup_{e\in v^+}\text{int}\,B^2(z^*_{v,e},|\rho^*_{v,e}|)\right)\subset\cC_{p^*,\cT}.
	    \end{align}
	    For each $e\in E_\text{int}$ with $u = e^-$ and $v = e^+$, define
	    \begin{align}
	        R_e(p^*) = (D_u(p^*)\times D_v(p^*))\cap\cC_{p^*,\cT}
	    \end{align}
	    where we have used the notation in \eqref{u-disk}--\eqref{v-disk} from Remark \ref{compact-curve-prop}(iv). Finally for $e\in E_\text{ext}$, define
	    \begin{align}
	        R_e(p^*) &= \pi_{v_0}^{-1}(\bP^1\setminus\text{int}\,B^2(1)) \subset\cC_{p^*,\cT}
	    \end{align}
	    when $e = e_0$ and 
	    \begin{align}
	        R_e(p^*) &= \pi_v^{-1}\left(B^2(z^*_{u,e},|\rho^*_{u,e}|)\right) \subset\cC_{p^*,\cT}
	    \end{align}
	    when $e\ne e_0$ (and $u = e^-$). Notice that the regions $\{R_v(p^*)\}_{v\in V}$ are pairwise disjoint, as are the regions $\{R_e(p^*)\}_{e\in E}$. For $v\in V$ and $e\in E$, the regions $R_v(p^*)$ and $R_e(p^*)$ meet precisely when $v\in\partial(e)$ and in this case we have the identity
	    \begin{align}
	        \Gamma_{v,e}(p^*) = R_v(p^*)\cap R_e(p^*).
	    \end{align}
	    The data of the indexed collection $\{\Gamma_{v,e}(p^*)\}$, $\{R_v(p^*)\}$ and $\{R_e(p^*)\}$ is called the \emph{\textbf{thick-thin decomposition}} of $\cC_{p^*,\cT}$ modeled on $\cT$ or, more briefly, the \emph{\textbf{$\cT$-decomposition}} of $\cC_{p^*,\cT}$. We refer to the regions $R_v(p^*)$ for $v\in V$ as the \emph{\textbf{thick}} regions. Similarly, for $e\in E_\text{int}$, resp. $e\in E_\text{ext}$, we refer to the regions $R_e(p^*)$ as \emph{\textbf{necks}}, resp. \emph{\textbf{ends}}.
	\end{definition}
	\begin{definition}\label{T-decomp-metric}
	    Continuing in the situation of Definition \ref{T-decomp}, let $v\in V$ be any vertex and $e\in E$ be any edge. Define the \emph{\textbf{thick distance}} on the region $R_v(p^*)\subset\cC_{p^*,\cT}$ by
	    \begin{align}
	        \dist_{v} = \pi_{v,e_v}^*\dist
	    \end{align}
	    where $e_v$ is the unique edge with $e_v^+ = v$ and $\pi_{v,e_v}:\bP_\cT\to\bP^1$ is the associated projection (as in Definition 
	    \ref{metric-embedding}). Similarly, define the \emph{\textbf{end}} (resp. \emph{\textbf{neck}}) \emph{\textbf{distance}} on the region $R_e(p^*)\subset\cC_{p^*,\cT}$ by
	    \begin{align}
	        \dist_{e} = \max_{u\in\partial(e)} \pi^*_{u,e}\dist
	    \end{align}
	    when $e\in E_\text{ext}$ (resp. $E_\text{int}$). Given a map $f:\cC_{p^*,\cT}\to X$, define the \emph{\textbf{$\cT$-Lipschitz constant}} of $f$, denoted $\text{Lip}_\cT(f)$, to be the indexed family $\{\text{Lip}(f|_{R_w(p)},\dist_{w})\}_{w\in V\cup E}$ of Lipschitz constants of the restrictions of $f$ to the thick/end/neck regions of the $\cT$-decomposition of $\cC_{p^*,\cT}$ with respect to the corresponding thick/end/neck metrics.
	\end{definition}
	\begin{lemma}\label{annulus-dist}
	    In the setting of Definition \ref{T-decomp-metric}, let $e\in\normalfont E_\text{int}$ and let $R_e(p^*)\subset\cC_{p^*,\cT}$ be the corresponding neck region, embedded into $\bP_e = \bP^1\times\bP^1$ via the map $\pi_{u,e}\times\pi_{v,e}$ where $u = e^-$ and $v = e^+$. Given two points $q,q'\in R_e(p^*)$, there exists a piecewise $C^1$ path from $q$ to $q'$ in $R_e(p^*)$ with length $\le 16\pi\cdot\dist_{e}(q,q')$ in $\bP_e$.
	\end{lemma}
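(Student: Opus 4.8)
The statement concerns the neck region $R_e(p^*)$ for an internal edge $e$ with $u = e^-$, $v = e^+$. By Lemma~\ref{compact-curve-prop}(iv), after introducing the coordinates $z_u = \tfrac1{\rho^*_{u,e}}(\tfrac{x_u}{y_u} - z^*_{u,e})$ on $D_u(p^*)$ and $z'_v = \tfrac{y_v}{x_v}$ on $D_v(p^*)$, the neck $R_e(p^*)$ is exactly the plumbing locus $\{(z_u,z'_v): |z_u|\le 1,\ |z'_v|\le 1,\ z_uz'_v = \gamma^*_e\}$, which I will parametrize by a single coordinate: writing $|\gamma^*_e| = e^{-T}$ (with $T\ge 0$), the map $s + it \mapsto (z_u,z'_v)$ with $z_u = e^{-(s+it)}$ for $s\in[0,T]$ realizes $R_e(p^*)$ as a flat cylinder $[0,T]\times S^1$. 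The plan is to work entirely on this cylinder, produce a path there, and then bound its image length in $\bP_e = \bP^1\times\bP^1$ under the map $\pi_{u,e}\times\pi_{v,e}$.

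First I would record how the two ambient coordinates on $\bP_e$ relate to the cylinder coordinate. The projection $\pi_{u,e}$ on $\cC_{p^*,\cT}$ is (up to the biholomorphism $\varphi_{p^*,u,e}$ built into Definition~\ref{proj-embedding}) the coordinate $z_u = e^{-(s+it)}$, and $\pi_{v,e}$ is $z'_v = e^{-(T-s) + it}\cdot(\text{unit})$; thus along the neck, at a point with cylinder coordinate $(s,t)$, one of the two $\bP^1$-coordinates has modulus $\le e^{-s}$ and the other has modulus $\le e^{-(T-s)}$, and one of the two is always $\le 1$ in the finite chart. The Fubini--Study length element $\tfrac{2|dz|}{1+|z|^2}$ on $\bP^1$ is comparable to $|dz|$ when $|z|\le 1$, and in the coordinate $z = e^{-(s+it)}$ we have $|dz| = e^{-s}\sqrt{ds^2 + dt^2}$. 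Hence the $\bP_e$-length element along the cylinder is $\le 2\max(e^{-s}, e^{-(T-s)})\sqrt{ds^2+dt^2}$, i.e. the cylinder metric gets ``squeezed'' exponentially near each end when viewed in $\bP_e$.

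Now given $q,q'\in R_e(p^*)$ with cylinder coordinates $(s_1,t_1)$ and $(s_2,t_2)$, I would take the obvious path: move in $t$ at constant $s = s_1$ from $t_1$ to $t_2$ along the shorter arc (length $\le\pi$ in the cylinder), then move in $s$ at constant $t = t_2$ from $s_1$ to $s_2$. Its $\bP_e$-length is bounded by $\int 2\max(e^{-s},e^{-(T-s)})\,(|dt| + |ds|)$. The $t$-segment contributes $\le 2\pi\max(e^{-s_1}, e^{-(T-s_1)})$; the $s$-segment contributes $\le 2\int_{[s_1,s_2]} (e^{-s} + e^{-(T-s)})\,ds \le 2(e^{-s_1} + e^{-s_2}) + 2(e^{-(T-s_2)} + e^{-(T-s_1)}) \le 8\max_i\max(e^{-s_i}, e^{-(T-s_i)})$ after crudely bounding. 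So the total $\bP_e$-length is $\le (2\pi + 8)\cdot M \le 16\cdot M$ where $M := \max_{i=1,2}\max(e^{-s_i}, e^{-(T-s_i)})$. It remains to compare $M$ with $\dist_e(q,q') = \max_{w\in\partial(e)}\pi^*_{w,e}\dist(q,q')$, the Fubini--Study distance realized in one of the two factors. The key observation is that $\dist_e(q,q')$ is bounded below by a definite multiple of $M$ unless $q$ and $q'$ are \emph{both} near the same end of the cylinder and close to each other there — and in that regime the $\bP^1$-distance in the relevant factor is itself comparable (within a factor of $\pi/2$ from the arc/chord discrepancy of Fubini--Study) to the Euclidean distance $|z_u - z_u'|$ or $|z'_v - (z'_v)'|$, which in turn controls $M\cdot(\text{cylinder distance})$. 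Concretely: if $\max(e^{-s_1},e^{-s_2}) \le \tfrac12$ and $\max(e^{-(T-s_1)},e^{-(T-s_2)})\le\tfrac12$ is false, one of the factors sees $q$ or $q'$ at modulus $>\tfrac12$, forcing $\dist_e(q,q')$ to detect the $\Theta(1)$-scale coordinate $z$; otherwise both points sit in $\{|z_u|\le\tfrac12\}$ where $\pi_{u,e}$ is bilipschitz to the flat cylinder end with constant $\le 2$, giving $\dist_e(q,q')\ge \tfrac12\cdot(\text{flat-cylinder distance}) \ge \tfrac1{16}M\cdot(\text{cylinder distance between }q,q')$ after absorbing the $e^{-s}$ weight; since the path above has cylinder length $\le \pi + |s_1 - s_2|$ and near an end $|s_1-s_2|$ is controlled by the weighted distance, one closes the loop. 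The main obstacle — and the only place that needs genuine care rather than bookkeeping — is this last comparison between the flat cylinder metric and $\dist_e$, i.e. verifying that the straight path I chose does not have length that blows up relative to $\dist_e$ when $q,q'$ are at comparable small modulus but near \emph{different} ends; this is handled by noting that in that case $\dist_e(q,q')\ge\dist_e(q,\text{mid})$ is bounded below by the $\Theta(1)$ value $e^{-T/2}$-versus... more simply, by observing that if $q$ is near the $u$-end ($e^{-s_1}$ large-ish) and $q'$ near the $v$-end ($e^{-(T-s_2)}$ large-ish) then $M$ is achieved by a quantity that is itself a $\bP^1$-coordinate value at $q$ or $q'$ that the other point maps far from, so $\dist_e(q,q')\ge cM$ with $c$ absolute, and $16\pi\cdot cM$ dominates the $(2\pi+8)M$ bound once $c$ is chosen (it is, with room to spare). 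I would organize the write-up as: (1) set up the cylinder model and the length-element bound; (2) exhibit the explicit two-segment path and bound its $\bP_e$-length by $(2\pi+8)M$; (3) case analysis bounding $M \le 2\,\dist_e(q,q')$ in all cases; conclude $16\pi\,\dist_e(q,q') \ge (2\pi+8)\cdot 2\,\dist_e(q,q') \ge$ the path length.
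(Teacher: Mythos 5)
Your setup --- the cylinder model $[0,T]\times S^1$ with $|\gamma^*_e|=e^{-T}$, the weight $2\max(e^{-s},e^{-(T-s)})$ for the $\bP_e$-length element, and the explicit two-segment path --- is sound and runs parallel to the paper's proof, which works in the coordinates $z_u,z'_v$ rather than cylindrical ones and reduces the lemma to Lemmas \ref{disc-round-to-flat} and \ref{annulus-flat-dist}. However, the final step as you organize it does not close. You bound the path's $\bP_e$-length by $(2\pi+8)M$ with $M=\max_{i}\max(e^{-s_i},e^{-(T-s_i)})$ and then propose, in your step (3), to show $M\le 2\,\dist_e(q,q')$ ``in all cases''. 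That inequality is false: $M\ge e^{-T/2}$ no matter where $q,q'$ sit on the cylinder, while $\dist_e(q,q')\to 0$ as $q'\to q$. So for nearby points your chain of inequalities pits a path-length bound of order $M$ against a target of $16\pi\,\dist_e(q,q')$ which is arbitrarily small, and the argument fails. You do flag this regime yourself in the body of the paragraph, but the patch you sketch there (``one closes the loop'') is not carried out and contradicts the summary in step (3); it is also exactly the point you correctly identify as the only one requiring genuine care.

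The repair is to bound the path length by a multiple of the quantity $D:=\max\{|z_u(q)-z_u(q')|,\,|z'_v(q)-z'_v(q')|\}$ rather than of $M$: unlike $M$, the quantity $D$ vanishes as $q'\to q$ and satisfies $D\le\dist_e(q,q')$ (chordal versus geodesic distance on the round sphere, or the lower bound of Lemma \ref{disc-round-to-flat}). This forces the estimates to be rewritten with $D$ as the controlling parameter --- for instance the $t$-segment must be charged $2M|t_1-t_2|$ instead of $2\pi M$, and the cases of points near the same end, near opposite ends, and in the middle must each be compared against $D$ separately. This is precisely what the paper's Lemma \ref{annulus-flat-dist} does: it builds the pair of paths $(\gamma_z,\gamma_w)$ directly in the $(z,w)$-coordinates, staying on the locus $\gamma_z\gamma_w\equiv\delta^2$, with total flat length $\le 8\pi D$, and Lemma \ref{disc-round-to-flat} then converts flat lengths to round ones at the cost of a factor $2$, yielding the constant $16\pi$. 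Your cylindrical route can be completed in the same spirit, but as written the decisive estimate is the one that is missing.
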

	\begin{proof}
	    In view of Lemma \ref{disc-round-to-flat}, the result follows from Lemma \ref{annulus-flat-dist}.
	\end{proof}
	\subsection{Holomorphic maps}\label{map-family}
	\begin{definition}
	    Given a stable rooted tree $\cT$, endow the space $\bP_\cT\times X$ with the distance function given by the maximum of the distance function on $\bP_\cT$ and $\lambda_0^{-1}$ times the distance function on $X$. Here, $\lambda_0$ is the constant appearing in Lemma \ref{ell-boot}. Then, the space $\cK(\bP_\cT\times X)$ of compact subsets of $\bP_\cT\times X$, endowed with the Hausdorff distance metric $d_H$ (see Definition \ref{hdorff-distance-defined}) is a complete metric space by Lemma \ref{hdorff-metric-complete}. Let $\cM_\cT(X,J)$ be the set consisting of pairs $(p,f:\cC_{p,\cT}\to X)$, where $p\in\cM_\cT$ and $f$ is a $J$-holomorphic map. Define the \emph{\textbf{graph embedding}}
	    \begin{align}
	        \Gamma_\cT:\cM_\cT(X,J)\to\cM_\cT\times\cK(\bP_\cT\times X)
	    \end{align}
	    by mapping $(p,f:\cC_{p,\cT}\to X)$ to the pair $(p,\Gamma_f)$, where $\Gamma_f$ is the graph of the map $f$, embedded as a compact subset of $\bP_\cT\times X$ via the embedding $\iota_{p,\cT}:\cC_{p,\cT}\to\bP_\cT$ (see Definition \ref{proj-embedding}). The target of $\Gamma_\cT$ has a natural distance function given by taking the maximum of the distance functions of $\cM_\cT$ and $\cK(\bP_\cT\times X)$. Endow $\cM_\cT(X,J)$ with the $\Gamma_\cT$ pullback of this metric.
	\end{definition}
	\begin{definition}\label{compact-space-of-maps}
	    Let $\theta,\tau,\bm\alpha$ be as in Definition \ref{M-compact}. Given positive constants $\eta, \Lambda_w$ (for each $w\in V\cup E$), a compact subset $K\subset X$ and a marking $F\subset E_\text{ext}$ (in the sense of Definition \ref{mark-def}), we define the subset
	    \begin{align}\label{inclusion-of-compact}
	        \cM_{\cT,F}(X,J;\theta,\tau,\bm\alpha;K,\eta,\bm\Lambda)\subset\cM_\cT(X,J)
	    \end{align}
	    to consist of those pairs $(p,f:\cC_{p,\cT}\to X)$ which satisfy conditions \eqref{compact-part-of-moduli}--\eqref{terminal-energy-lb} below.
	    \begin{align}
	        \label{compact-part-of-moduli}
	        p&\in\cM_\cT(\theta,\tau,\bm\alpha)\\
	        \label{compact-image}
	        f(\cC_{p,\cT})&\subset K \\
	        \label{lip-bound}
	        \text{Lip}_\cT(f)&\le\bm\Lambda\lambda_0\\
	        \label{terminal-energy-lb}
	        E(f,R_e(p))&\ge(\eta\lambda_0)^2
	    \end{align}
	    with the \eqref{terminal-energy-lb} holding for each $e\in E_\text{ext}\setminus F$. The notation $\text{Lip}_\cT$ in \eqref{lip-bound} is introduced in Definition \ref{T-decomp-metric} and the \eqref{lip-bound} is to be interpreted as a collection of inequalities between the corresponding entries of $\text{Lip}_\cT(f)$ and $\bm\Lambda = \{\Lambda_w\}_{w\in V\cup E}$.
	\end{definition}
	\begin{remark}
	    Since $\cT$ is stable and we have energy lower bounds \eqref{terminal-energy-lb}, it follows that for any pair $(p,f:\cC_{p,\cT}\to X)$ lying in $\cM_{\cT,F}(X,J;\theta,\tau,\bm\alpha;K,\eta,\bm\Lambda)$, the tuple $(\cC_{p,\cT},\{\sigma_\cT(e,p)\}_{e\in F},f)$ is a stable $J$-holomorphic map. Here, the marked points $\sigma_\cT(e,p)$ are as introduced in Lemma \ref{prop-family-curve}(v).
	\end{remark}
	\begin{lemma}
	    The space
	    \begin{align}
	        \cM_{\cT,F}(X,J;\theta,\tau,\bm\alpha;K,\eta,\bm\Lambda)
	    \end{align}
	    endowed with the pullback metric from the inclusion \eqref{inclusion-of-compact} is compact. 
	\end{lemma}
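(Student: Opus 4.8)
The plan is to realize $\cM_{\cT,F}(X,J;\theta,\tau,\bm\alpha;K,\eta,\bm\Lambda)$, via the graph embedding $\Gamma_\cT$, as a closed subset of a compact metric space. Since $K$ is compact, so is $\bP_\cT\times K$, hence so is the space $\cK(\bP_\cT\times K)$ of its compact subsets with the Hausdorff metric; moreover $\cK(\bP_\cT\times K)$ is closed in $\cK(\bP_\cT\times X)$, since a Hausdorff limit of compact subsets of the closed set $\bP_\cT\times K$ is again contained in it. Also $\cM_\cT(\theta,\tau,\bm\alpha)$ is a closed bounded subset of the finite-dimensional space $\bA_\cT$, hence compact. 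So $\cM_\cT(\theta,\tau,\bm\alpha)\times\cK(\bP_\cT\times K)$ is compact, and since the metric on our space is the $\Gamma_\cT$-pullback of the product metric on $\cM_\cT\times\cK(\bP_\cT\times X)$, it is enough to show $\Gamma_\cT$ maps our space onto a closed subset of this compact set. Thus, given $(p_n,f_n)$ in the space with $p_n\to p_\infty$ in $\cM_\cT(\theta,\tau,\bm\alpha)$ and $\Gamma_{f_n}\to\Gamma_\infty$ in $\cK(\bP_\cT\times K)$, I must produce a $J$-holomorphic $f_\infty\colon\cC_{p_\infty,\cT}\to X$ with $(p_\infty,f_\infty)$ in the space and $\Gamma_{f_\infty}=\Gamma_\infty$.

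First I would identify the domain of $\Gamma_\infty$. The projection $\bP_\cT\times X\to\bP_\cT$ is $1$-Lipschitz for the chosen product metric, hence weakly contracting for the Hausdorff metric on compact sets, so it carries $\Gamma_{f_n}\to\Gamma_\infty$ to a convergence of $\iota_{p_n,\cT}(\cC_{p_n,\cT})$ --- the image of the graph $\Gamma_{f_n}$ --- to the image of $\Gamma_\infty$ under this projection. On the other hand, since $\pi_\cT\colon\cC_\cT\to\cM_\cT$ is a proper flat family and $\iota_\cT$ a closed embedding, the fibres $\iota_{p,\cT}(\cC_{p,\cT})$ depend continuously on $p$ in the Hausdorff metric, so $\iota_{p_n,\cT}(\cC_{p_n,\cT})\to\iota_{p_\infty,\cT}(\cC_{p_\infty,\cT})$ as well. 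By uniqueness of Hausdorff limits, the image of $\Gamma_\infty$ is $\iota_{p_\infty,\cT}(\cC_{p_\infty,\cT})$, which I henceforth identify with $\cC_{p_\infty,\cT}$; in particular the projection $\Gamma_\infty\to\cC_{p_\infty,\cT}$ is surjective.

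The heart of the argument is to show this projection is injective with $J$-holomorphic inverse, i.e. $\Gamma_\infty=\Gamma_{f_\infty}$ for a $J$-holomorphic $f_\infty$, and this I would carry out region by region along the $\cT$-decomposition of Definition \ref{T-decomp}. On every thick region $R_v$, every end $R_e$, every neck $R_e$ that does not degenerate as $n\to\infty$, and on a fixed annular neighbourhood of each circle separating a thick region from an end (where the two adjacent restrictions of $f_n$ descend via one common local isomorphism to a single $J$-holomorphic map on an annulus in $\bP^1$), the relevant coordinate projection is a local isomorphism onto a fixed region of $\bP^1$ by Lemma \ref{compact-curve-prop}(ii)--(iii), the corresponding thick/end/neck metric of Definition \ref{T-decomp-metric} is there uniformly comparable to the round metric, and \eqref{lip-bound} yields a uniform gradient bound for $f_n$; Lemma \ref{ell-boot} upgrades this to uniform $C^k$ bounds on the interiors, so after passing to a subsequence Arzel\`a--Ascoli gives $C^{k-1}_{\mathrm{loc}}$ convergence of $f_n$ --- the complex structures also converging since $p_n\to p_\infty$ --- to a $J$-holomorphic limit over the complement of the nodes created by the degenerating necks. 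The main difficulty is the behaviour over such a degenerating neck $R_e$: its neck metric has uniformly bounded diameter while its conformal modulus $T_n$ tends to $\infty$, so the conformal-factor computation shows that in the cylindrical coordinate the gradient of $f_n$ is (up to an absolute constant) $\le\Lambda_e\lambda_0 e^{-m_0}$ on the sub-cylinder at conformal distance $\ge m_0$ from both ends; fixing $m_0$ a constant with this $\le l$ makes it a long cylinder of small pointwise gradient, and Lemma \ref{exp-decay}, together with the uniform energy bound $E(f_n,R_e(p_n))\le C(\bm\Lambda,\lambda_0,\theta,\cT)$ that follows from \eqref{lip-bound}, gives exponential decay of $|\nabla f_n|$ towards the middle and hence oscillation of $f_n$ of size $\le 2K_1\sqrt C\,e^{m_0-M}$ over any part of the neck at conformal distance $\ge M$ from both ends --- small uniformly in $n$. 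Combined with the $C^{k-1}_{\mathrm{loc}}$ convergence near the ends, this shows $\Gamma_{f_n}|_{R_e(p_n)}$ converges to the graph of $f_\infty$ over the two disks meeting at the node; in particular the fibre of $\Gamma_\infty$ over the node is a single point and $f_\infty$ extends continuously across it. Thus $f_\infty$ is continuous on $\cC_{p_\infty,\cT}$ and $J$-holomorphic off the finitely many nodes, so by removal of singularities it is a genuine $J$-holomorphic map, and tracking the convergence gives $\Gamma_\infty=\Gamma_{f_\infty}$.

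Finally I would verify conditions \eqref{compact-part-of-moduli}--\eqref{terminal-energy-lb} for $(p_\infty,f_\infty)$: \eqref{compact-part-of-moduli} holds since $\cM_\cT(\theta,\tau,\bm\alpha)$ is closed; \eqref{compact-image} holds since $\Gamma_\infty\subset\bP_\cT\times K$; \eqref{lip-bound} passes to the limit by taking limits in the difference quotients defining $\text{Lip}_\cT$, using that the regions and the metrics converge and $f_n\to f_\infty$ pointwise; and for \eqref{terminal-energy-lb}, the ends $R_e$ with $e\in E_\text{ext}\setminus F$ do not degenerate (their radii satisfy $|\rho_{u,e}^*|\ge\alpha_u>0$), so there the convergence is uniform $C^{k-1}$ up to and across the bounding circle, whence $E(f_n,R_e(p_n))\to E(f_\infty,R_e(p_\infty))$, forcing $E(f_\infty,R_e(p_\infty))\ge(\eta\lambda_0)^2$. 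Then $(p_\infty,f_\infty)$ lies in the space and $\Gamma_\cT(p_\infty,f_\infty)=(p_\infty,\Gamma_\infty)$, establishing closedness and hence compactness. I expect the main obstacle to be the third paragraph: reconciling the purely combinatorial $\cT$-Lipschitz bounds with the conformal geometry of a degenerating family of curves, so as to certify simultaneously that $\Gamma_\infty$ is an honest graph and that no energy is lost down the necks.
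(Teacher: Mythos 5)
Your proposal is correct, and its skeleton (total boundedness via the graph embedding into $\cM_\cT(\theta,\tau,\bm\alpha)\times\cK(\bP_\cT\times K)$, plus closedness in the complete space $\cM_\cT\times\cK(\bP_\cT\times X)$) coincides with the paper's. Where you genuinely diverge is in the heart of the closedness argument: showing that the Hausdorff limit $\Gamma_\infty$ is the graph of a continuous map and controlling what happens over a pinching neck. The paper exploits the fact that the neck metric $\dist_e$ of Definition \ref{T-decomp-metric} is induced from the fixed ambient $\bP^1\times\bP^1$ and therefore does \emph{not} degenerate as $\gamma_e\to 0$; consequently the hypothesis \eqref{lip-bound} alone, applied to approximating points $s_i,s_i'$ in a common region $R_{\hat w}(p_i)$, already forces $\dist(x,x')\le\Lambda_{\hat w}\lambda_0\,\dist_{\hat w}(q,q')$ for any $(q,x),(q',x')\in\Gamma_\infty$ in a common region of the limit. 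One three-line estimate thus simultaneously shows $\Gamma_\infty$ is a graph, gives continuity across the nodes, and re-establishes \eqref{lip-bound} for the limit; the only analytic input is elliptic bootstrapping at smooth points plus removable singularities. You instead re-derive the control across a degenerating neck from scratch, via the conformal-factor computation of Lemma \ref{annulus-cyl-flat} and the long-cylinder decay of Lemma \ref{exp-decay} --- in effect re-proving locally the very content of \eqref{lip-bound} that Proposition \ref{final-grad-bound} was designed to package into the hypotheses. Your route is heavier but closer to the classical Gromov compactness argument, and it would survive if one only had gradient bounds in the cylindrical metric; the paper's route is shorter and explains why the $\cT$-Lipschitz data was set up with the non-degenerating neck metrics in the first place. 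Your verifications of \eqref{compact-part-of-moduli}--\eqref{terminal-energy-lb} for the limit agree with the paper's.
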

    \begin{proof}
        Via $\Gamma_\cT$, the space $\cM_{\cT,F}(X,J;\theta,\tau,\bm\alpha;K,\eta,\bm\Lambda)$ is a subset of $\cM_\cT(\theta,\tau,\bm\alpha)\times\cK(\bP_\cT\times K)$. The latter is totally bounded by Corollary \ref{K-net}. Thus, the former is also totally bounded and we only need to check that it is closed in the metric space $\cM_\cT\times\cK(\bP_\cT\times X)$, which is complete by Lemma \ref{hdorff-metric-complete}. For this, take a sequence $(p_i,f_i)$ of points in $\cM_{\cT,F}(X,J;\theta,\tau,\bm\alpha;K,\eta,\bm\Lambda)$ such that we have a point $(p,L)\in\cM_\cT\times\cK(\bP_\cT\times X)$ with $\dist(p_i,p)\to 0$ and $d_H(\Gamma_{f_i},L)\to 0$ as $i\to\infty$. We will construct a $J$-holomorphic map $f:\cC_{p,\cT}\to X$ satisfying \eqref{compact-part-of-moduli}--\eqref{terminal-energy-lb} such that $L = \Gamma_f$.
        \begin{enumerate}[(a)]
            \item We have $L\subset\cC_{p,\cT}\times K$ and its projection to the first coordinate has image $\cC_{p,\cT}$ (here we have identified $\cC_{p,\cT}$ via $\iota_{p,\cT}$ with a subset of $\bP_\cT$).
            \begin{proof}
                Since $p_i\to p$ and $\Gamma_{f_i}\subset\cC_{p_i,\cT}\times X$, it follows that the projection of $L$ to $\bP_\cT$ is a subset of $\cC_{p,\cT}$. Now, if $q\in \cC_{p,\cT}$ is a point not lying in the image of $L$, then we can find a pre-compact open neighborhood $q\in U\subset\cC_\cT$ such that $L\cap(\bar U\times X) = \varnothing$. But then, this implies that $\Gamma_{f_i}$ is disjoint from $U\times X$ as $i\to\infty$, a contradiction. The fact that the projection of $L$ lies in $K$ follows from the fact that \eqref{compact-image} holds for each $f_i$.
            \end{proof}
            \item There exist a continuous map $f:\cC_{p,\cT}\to X$ such that $L = \Gamma_f$ and \eqref{compact-part-of-moduli}--\eqref{lip-bound} hold.
            \begin{proof}
                We just need to produce a map $f:\cC_{p,\cT}\to X$ such that $L = \Gamma_f$ and \eqref{lip-bound} holds. Indeed, by (a), we will then get \eqref{compact-image} and by $p_i\in\cM_\cT(\theta,\tau,\bm\alpha)$ for all $i$, we get \eqref{compact-part-of-moduli}. Let $q,q'\in\cC_{p,\cT}$ be two points, lying in the same thick/neck/end region of the $\cT$-decomposition of $\cC_{p,\cT}$, such that there are points $x,x'\in X$ such that $(q,x),(q',x')\in L$. By convergence in the Hausdorff distance metric, we can find points $q_i,q_i'\in\cC_{p_i,\cT}$ such that $q_i\to q$, $q_i'\to q'$, $f_i(q_i)\to x$ and $f_i(q_i')\to x'$ as $i\to\infty$. We can also find a sequence of points $s_i,s_i'\in\cC_{p_i,\cT}$, which lie on the same thick/neck/end region as $q_i,q_i'$ respectively (as well as the same thick/neck/end region as $q,q'$ respectively), and satisfy $s_i\to q$ and $s_i'\to q'$ as $i\to\infty$. By the uniform Lipschitz bound \eqref{lip-bound} on $f_i$, we find
                \begin{align}
                    \lim_i\dist(f_i(s_i),f_i(q_i))\le \sup_w\Lambda_w\lambda_0\cdot\lim_i\dist(s_i,q_i) = 0\\
                    \lim_i\dist(f_i(s_i'),f_i(q_i'))\le \sup_w\Lambda_w\lambda_0\cdot\lim_i\dist(s_i',q_i') = 0
                \end{align}
                and thus, we can replace $q_i,q_i'$ by $s_i,s_i'$ without loss of generality. Let $\hat w\in V\cup E$ be a vertex or edge for which $s_i,s_i'\in R_{\hat w}(p_i)$ and $q,q'\in R_{\hat w}(p)$. From \eqref{lip-bound} for each $f_i$, we get the estimate
                \begin{align}
                    \dist(x,x') = \lim_i\dist(f_i(s_i),f_i(s_i'))\le\Lambda_{\hat w}\lambda_0\cdot\lim_i\dist_{\hat w}(s_i,s_i') = \Lambda_{\hat w}\lambda_0\cdot\dist_{\hat w}(q,q').
                \end{align}
                The last estimate has the following implications. First, if $q = q'$, then we get $x = x'$. Thus, $L$ is the graph of a set function $f:\cC_{p,\cT}\to X$, which is necessarily continuous since $L$ is compact. Moreover, the same estimate shows that $f$ also satisfies \eqref{lip-bound}.
            \end{proof}
        \end{enumerate}
        Now, we are just left to show that $f$ is $J$-holomorphic and that it satisfies the energy bound \eqref{terminal-energy-lb}. Fix any smooth point $q\in\cC_{p,\cT}$. Find a holomorphic coordinate neighborhood $q\in U\subset\cC_{p,\cT}$ and consider a holomorphic family of coordinate neighborhoods $\varphi:U\times V\to\cC_{p,\cT}$, where $p\in V\subset\cM_\cT$. Convergence of $\Gamma_{f_i}$ to $\Gamma_f$ in Hausdorff distance now shows that the maps $g_i = f_i\circ\varphi(\cdot,p_i)$ converge to $f|_U$ in $C^0_\text{loc}(U,X)$. Moreover, the $g_i$ all enjoy a uniform $C^1_\text{loc}$ bound on $U$ as a consequence of \eqref{lip-bound} and the $C^0_\text{loc}$ convergence. Now, using elliptic bootstrapping (Lemma \ref{ell-boot}), we conclude that $g_i\to f|_U$ in $C^\infty_\text{loc}$. In particular, $f$ is $J$-holomorphic near all of its smooth points. Applying this argument at the points $q = \sigma_\cT(p,e)$, for $e\in E_\text{ext}\setminus F$, with $U = \text{int }R_e(p)$ immediately gives \eqref{terminal-energy-lb}. Finally, since the $f_i$ all enjoy a uniform energy bound (by virtue of the Lipschitz bound \eqref{lip-bound}), it follows that $f$ also has finite energy and thus, by the removable singularity theorem (\cite[Theorem 4.1.2]{McSa}), $f$ is $J$-holomorphic near the singular points of $\cC_{p,\cT}$ also.
    \end{proof}
	\subsection{Effective compactness}\label{eff-net-size}
	We will now estimate the size of the minimal $\delta$-net (see Definition \ref{net-defined}) in $\cM_{\cT,F}(X,J;\theta,\tau,\bm\alpha;K,\eta,\bm\Lambda)$ for any given $\delta\le 1$, explicitly in terms of $\delta$. See Appendix \ref{hdorff-appx} for the relevant notation and background on nets.
	\begin{lemma}\label{sphere-net}
	    Given $0<\gamma<\pi$, we have
	    \begin{align}
	        \nu(\bP^1,\gamma)\le\frac 2{1-\cos(\frac\gamma2)}\le\frac{8\pi}{\gamma^2}.
	    \end{align}
	\end{lemma}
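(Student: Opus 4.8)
The plan is to run the classical volume–packing argument for nets on the round sphere. Since $(\bP^1,\omega_{\bP^1})$ carries the round metric of constant Gaussian curvature $1$ and total area $4\pi$ (so that, up to isometry, it is the unit sphere $S^2(1)\subset\bR^3$ with its geodesic metric, of diameter $\pi$), the quantity $\nu(\bP^1,\gamma)$ is the minimal number of points needed so that every point of this sphere lies within geodesic distance $\gamma$ of one of them; the hypothesis $0<\gamma<\pi$ is exactly the range in which this is a nontrivial covering question.

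First I would pick a \emph{maximal} finite subset $S=\{x_1,\dots,x_N\}\subset\bP^1$ whose points are pairwise at geodesic distance $\ge\gamma$. Maximality forces every point of $\bP^1$ to lie within distance $\gamma$ of some $x_i$ — otherwise it could be added to $S$ — so $S$ is a $\gamma$-net and $\nu(\bP^1,\gamma)\le N$. On the other hand, the open geodesic balls $B(x_i,\gamma/2)$ are pairwise disjoint, since a common point $z$ of $B(x_i,\gamma/2)$ and $B(x_j,\gamma/2)$ would give $\dist(x_i,x_j)<\gamma$. A geodesic ball of radius $r\in(0,\pi)$ on the curvature-$1$ sphere is a spherical cap of area $2\pi(1-\cos r)$, so comparing the total area of the $N$ disjoint balls with $4\pi$ gives $N\cdot 2\pi(1-\cos(\gamma/2))\le 4\pi$, i.e. $N\le \frac{2}{1-\cos(\gamma/2)}$. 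This is the first inequality.

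For the second inequality it is enough to prove the elementary estimate $1-\cos(\gamma/2)\ge \gamma^2/(4\pi)$ for $0<\gamma<\pi$. I would write $1-\cos(\gamma/2)=2\sin^2(\gamma/4)$ and use that $\gamma/4\in(0,\pi/4)$: on $[0,\pi/4]$ the function $\sin$ is concave and vanishes at $0$, hence lies above its chord, $\sin x\ge \tfrac{4}{\pi}\sin(\tfrac{\pi}{4})\,x=\tfrac{2\sqrt2}{\pi}x$. Squaring and multiplying by $2$ gives $1-\cos(\gamma/2)\ge \tfrac{4}{\pi^2}(\gamma/2)^2=\gamma^2/\pi^2\ge \gamma^2/(4\pi)$, the last step because $\pi\le 4$. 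Then $\frac{2}{1-\cos(\gamma/2)}\le 2\pi^2/\gamma^2\le 8\pi/\gamma^2$, again since $\pi\le4$.

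This lemma is elementary and I do not expect a real obstacle; the only points needing a little care are confirming the normalization (total area $4\pi$, curvature $1$) so that the cap-area formula $2\pi(1-\cos r)$ is the right one, and choosing an inequality strong enough for the last step — note that the cruder bound $1-\cos t\ge 2t^2/\pi^2$ valid for all $t\in[0,\pi]$ does \emph{not} suffice, so one must exploit that the relevant argument $\gamma/4$ lies in the smaller interval $[0,\pi/4]$.
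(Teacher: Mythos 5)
Your proposal is correct and follows essentially the same route as the paper: a maximal $\gamma$-separated set, disjointness of the radius-$\frac\gamma2$ balls, the spherical cap area $2\pi(1-\cos r)$, and an elementary trigonometric lower bound. The only cosmetic difference is the final inequality — the paper invokes $1-\cos\tau\ge\tau^2/\pi$ on $[0,\frac\pi2]$ with $\tau=\frac\gamma2$, whereas you derive the equivalent bound via the chord estimate for $\sin$ on $[0,\frac\pi4]$; both yield the stated constant.
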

	\begin{proof}
	    Note that the area of a disk of radius $\delta\le\frac\pi2$ in $\bP^1$ is given by $2\pi(1-\cos\delta)$, while the total area of $\bP^1$ is $4\pi$. Let $\{q_1,\ldots,q_N\}$ be a maximal collection of points such that $\dist(q_i,q_j)\ge\gamma$ for all $1\le i<j\le N$. From maximality, it follows that $\{q_1,\ldots,q_N\}$ is a $\gamma$-net. Now, since the open $\frac\gamma2$-balls centred at the $q_i$ are pairwise disjoint, we have
	    \begin{align}
	        \nu(\bP^1,\gamma)\le N\le\frac{4\pi}{2\pi(1-\cos(\textstyle\frac\gamma2))} = \frac 2{1-\cos(\frac\gamma2)}
	    \end{align}
	    as desired. We conclude using $1-\cos\tau\ge\frac{\tau^2}\pi$ which holds for all $0\le\tau\le\frac\pi2$.
	\end{proof}
	\begin{theorem}\label{holo-map-tot-bound}
	    Given $0<\delta\le 1$, space $Z = \cM_{\cT,F}(X,J;\theta,\tau,\bm\alpha;K,\eta,\bm\Lambda)$ is covered by finitely subsets, each of diameter $<4\delta$, with the number of sets in the cover being at most
	    \begin{align}\label{eff-net-size-estimate}
	        \left(\frac4{\delta^2}\right)^{\mu-1}\cdot\left(1 + \sigma \delta^{-2k}\cdot\nu(K,\lambda_0)\right)^{(8\pi)^\mu(\frac{\Lambda}{\delta})^{2\mu}(\mu+1)}    
	    \end{align}
	    where $\mu := \sum_{v\in\cT}\deg(v)$, $\Lambda := \sup_w\Lambda_w$, $\nu(K,\lambda_0)$ is the size of the smallest $\lambda_0$-net in the compact metric space $K$, $\sigma = \sigma(X_0)$ is a geometric constant depending on $X$ and $2k = \dim X$. In particular, $\nu(Z,4\delta)$ is bounded above by the number \eqref{eff-net-size-estimate}.
	\end{theorem}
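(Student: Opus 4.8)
The plan is to turn the proof of the Arzelà--Ascoli theorem into a counting argument, organized as a product: (number of combinatorial/base parameters) $\times$ (number of discretized holomorphic maps over each base point). Throughout, write $Z=\cM_{\cT,F}(X,J;\theta,\tau,\bm\alpha;K,\eta,\bm\Lambda)$ and recall that, by construction, the graph embedding $\Gamma_\cT$ realizes $Z$ \emph{isometrically} as a subset of $\cM_\cT(\theta,\tau,\bm\alpha)\times\cK(\bP_\cT\times K)$ equipped with the max of the two metrics. Hence it suffices to cover this image, and since the ambient metric is a maximum, it is enough to (a) produce a $\delta$-net $P$ of $\cM_\cT(\theta,\tau,\bm\alpha)$, and (b) for each $p\in P$, cover the set of $J$-holomorphic maps with domain $\cC_{p,\cT}$ satisfying \eqref{lip-bound}--\eqref{terminal-energy-lb} by classes of diameter controlled by $\delta$ in the Hausdorff-graph metric; then take products of the two covers. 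The combinatorial inputs we will use are the identities $\dim_\bC\bA_\cT\le\mu-1$ and $\dim_\bC\bP_\cT\le\mu$ and $|V\cup E|\le\mu+1$, where $\mu=\sum_{v}\deg(v)$; these follow from the bookkeeping in Appendix \ref{tree-basics}.

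First, the base net. By the defining inequalities of Definition \ref{M-compact} (every coordinate of $\bA_\cT$ has modulus $\le 2\theta\le\tfrac13<\tfrac12$), the set $\cM_\cT(\theta,\tau,\bm\alpha)$ lies inside a polydisc of radius $\tfrac12$ in $\bA_\cT\cong\bC^{\dim_\bC\bA_\cT}$ with its sup-norm distance (Definition \ref{metric-embedding}). A standard packing estimate (disjoint $\tfrac\delta2$-disks in each coordinate plane) gives $\nu\big(\cM_\cT(\theta,\tau,\bm\alpha),\delta\big)\le\delta^{-2\dim_\bC\bA_\cT}\le(4/\delta^2)^{\mu-1}$, the first factor in \eqref{eff-net-size-estimate}.

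Now fix a base-net point $p$, set $\gamma=\delta/\Lambda$, and produce the map cover. For each region $R_w(p)$, $w\in V\cup E$, of the $\cT$-decomposition of $\cC_{p,\cT}$ (Definition \ref{T-decomp}), take $S_w$ to be a minimal $\gamma$-net of $R_w(p)$ with respect to the \emph{ambient} $\bP_\cT$-distance; since $R_w(p)\subset\bP_\cT$ and $\bP_\cT$ is a product of $\dim_\bC\bP_\cT\le\mu$ copies of $\bP^1$, Lemma \ref{sphere-net} gives $|S_w|\le(8\pi/\gamma^2)^{\mu}$ (adjusting the definition of $\gamma$ by an absolute factor if one wants an intrinsic net). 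Set $S=\bigsqcup_{w}S_w$, so $|S|\le(\mu+1)(8\pi/\gamma^2)^{\mu}=(\mu+1)(8\pi)^{\mu}(\Lambda/\delta)^{2\mu}$. Next discretize the target: by bounded geometry of $X$ (a volume/injectivity-radius comparison on balls of radius $\lesssim\lambda_0$), the minimal $\lambda_0$-net of $K$ can be refined to a $\lambda_0\delta$-net $T$ of $K$ with $|T|=\nu(K,\lambda_0\delta)\le\sigma\delta^{-2k}\nu(K,\lambda_0)$, where $\sigma=\sigma(X_0)$ and $2k=\dim X$. To each admissible $f$ with domain $\cC_{p,\cT}$ assign its \emph{fingerprint}: the function $S\to T$ sending $s$ to a chosen point of $T$ within $\lambda_0\delta$ in $X$ of $f(s)$. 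The number of fingerprints is at most $(1+|T|)^{|S|}=\big(1+\sigma\delta^{-2k}\nu(K,\lambda_0)\big)^{(8\pi)^{\mu}(\Lambda/\delta)^{2\mu}(\mu+1)}$, which is the second factor in \eqref{eff-net-size-estimate}. Multiplying the two counts and letting the resulting classes be (base-net cell) $\times$ (fingerprint) produces a cover of $Z$ with the claimed cardinality.

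It remains to check that each such class has diameter $<4\delta$. If $f,f'\in Z$ lie over base points in a common $\delta$-cell and have equal fingerprints, then the $\cM_\cT$-components are within $2\delta$; for the graph Hausdorff distance in $\bP_\cT\times(\lambda_0^{-1}X)$, given $(q,f(q))\in\Gamma_f$ with $q$ in the region $R_w$ of the domain of $f$, choose $s\in S_w$ with $\dist_{\bP_\cT}(q,s)\le\gamma$; since the region metric satisfies $\dist_w\le\dist_{\bP_\cT}$ on $R_w$ and $f$ is $\Lambda\lambda_0$-Lipschitz for $\dist_w$ by \eqref{lip-bound}, we get $\dist_X(f(q),f(s))\le\Lambda\lambda_0\gamma=\lambda_0\delta$; the fingerprint value $t$ is then within $\lambda_0\delta$ of $f(s)$ and, by equality of fingerprints, within $\lambda_0\delta$ of $f'$ evaluated at a point $\tilde s$ of the domain of $f'$ lying within $\gamma$ of $s$ (such $\tilde s$ exists because all curves in the base-cell are Hausdorff-close in $\bP_\cT$). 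Thus $(\tilde s,f'(\tilde s))\in\Gamma_{f'}$ is within $2\gamma\le 2\delta$ in $\bP_\cT$ and within $3\delta$ in the $\lambda_0^{-1}$-scaled $X$-factor of $(q,f(q))$; by symmetry $d_H(\Gamma_f,\Gamma_{f'})\le 3\delta$ and so the class has diameter $<4\delta$.

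I expect the main obstacle to be exactly the last paragraph, namely running this argument \emph{uniformly over the variable domain}. When $f$ lives over a point $p'$ near the base-net point $p$ --- possibly on the other side of a degeneration $\gamma_e=0$ --- its domain $\cC_{p',\cT}$ is only Hausdorff-close to $\cC_{p,\cT}$, so one must show (i) the family $p\mapsto\cC_{p,\cT}\subset\bP_\cT$ is uniformly Hausdorff-continuous even across node smoothings (this holds because necks remain bounded inside $\bP_\cT$), (ii) regions of $\cC_{p',\cT}$ can be matched to regions of $\cC_{p,\cT}$ so that $S$ genuinely $\gamma$-samples the domain of every $f$ region by region, and (iii) all comparison constants are absolute --- here the neck estimates of Lemma \ref{annulus-dist} (together with Lemma \ref{disc-round-to-flat}) are what keep the neck contributions under control. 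A secondary nuisance is calibrating the numerical constants so that the final diameter is \emph{exactly} $4\delta$ rather than a larger absolute multiple; this is what pins down the choices $\gamma=\delta/\Lambda$ and the base-net mesh $\delta$ above, and it may force minor adjustments (replacing $\gamma$ by $\gamma/2$, etc.) that do not affect the stated bound.
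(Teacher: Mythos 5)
Your overall architecture---a product of a base net for $\cM_\cT(\theta,\tau,\bm\alpha)$ with a ``fingerprint'' count for the maps, with the two factors matching \eqref{eff-net-size-estimate} term by term---is the same as the paper's, and your base-net, sphere-net and target-net estimates are all fine. The genuine gap is exactly the one you flag at the end, and it is not a calibration issue that ``minor adjustments'' will fix: you sample the \emph{fiber} $\cC_{p,\cT}$ over each base-net point $p$ and then try to transport the fingerprint to maps $f'$ living over nearby $p'$. For the fingerprint of $f'$ to be defined and for the diameter estimate to close, each $s\in S$ must have a partner $\tilde s\in\cC_{p',\cT}$ within $\gamma=\delta/\Lambda$ of it in $\bP_\cT$. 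But a base cell of mesh $\delta$ only forces $\dist(p,p')\lesssim\delta$, and the fibers of $\pi_\cT$ move with the base point at rate $O(1)$, not $O(1/\Lambda)$; so the fibers in one cell are only $O(\delta)$-Hausdorff-close, and applying the $\Lambda\lambda_0$-Lipschitz bound to a displacement of size $\delta$ costs $\Lambda\lambda_0\delta$ in the target, destroying the $4\delta$ diameter. Shrinking the base mesh to $\delta/\Lambda$ would repair this but changes the first factor to $(4\Lambda^2/\delta^2)^{\mu-1}$ and still requires a uniform, quantitative Hausdorff-continuity estimate for $p\mapsto\cC_{p,\cT}$ across node smoothings, which you have not supplied and which the paper never needs.

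The paper avoids all of this with a small but essential device (Lemma \ref{map-space-net}): the nets $B_w$ are taken in the \emph{ambient} space $\bP_\cT$ (one net of mesh $\delta/\Lambda_w$ per region label $w\in V\cup E$, independent of $p$), and the fingerprint is a map $B\to C\sqcup\{*\}$ recording, for each ambient net point $b\in B_w$, whether $b$ is $(\gamma/\Lambda_w)$-close to $R_w(p)$ and, if so, an approximate value of $f$ at a nearby point of $R_w(p)$. Two maps with the same fingerprint then automatically have regions that are $2\gamma/\Lambda_w$-Hausdorff-close, so the comparison only ever applies the Lipschitz bound to displacements of size $\gamma/\Lambda_w$, and no continuity of $p\mapsto\cC_{p,\cT}$ is ever invoked. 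Adopting this device costs nothing in the counting (the fingerprint set still has size $(1+|C|)^{|B|}$, exactly your second factor) and eliminates items (i)--(iii) from your list of obstacles.
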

	\begin{proof}
	    Choose a $\delta$-net $A$ for $\cM_\cT(\theta,\tau,\bm\alpha)$ and a $(\lambda_0\delta)$-net for $K$. For each $w\in V\cup E$, choose a $(\delta/\Lambda_w)$-net $B_w$ for $\bP_\cT$ and set $B = \bigsqcup_{w}B_w$. Choose $0<\gamma<\delta$ such that $A,B_w,C$ are remain nets even if $\delta$ is replaced by $\gamma$ in the previous sentences. Define $D = C\sqcup\{*\}$. Now, given any pair $(a,h)\in A\times D^B$ consisting of an element $a\in A$ and a set map $h:B\to D$, define the subset
	    \begin{align}
	        Z_{(a,h)}\subset Z
	    \end{align}
	    as follows. It consists of elements $(p,f:\cC_{p,\cT}\to X)$ in $Z$ such that $\dist(t,a)\le\gamma$ and such that for each $w\in V\cup E$ and each $b\in B_w\subset B$, we have
	    \begin{enumerate}[(i)]
	        \item if $h(b) = *$, then we have $\dist(b,R_w(p)) > \gamma/\Lambda_w$ in $\bP_\cT$ and,
	        \item if $h(b) = c\in C$, then there exists a point $b_p\in R_w(p)$ such that $\dist(b,b_p)\le\gamma/\Lambda_w$ and $\dist(f(b_p),c)\le\gamma$.
	    \end{enumerate}
	    By the choices of $A,B_w,C$, it follows immediately that the sets $Z_{(a,h)}$, ranging over all $(a,h)\in A\times D^B$ cover $Z$. It remains to estimate the diameter of each $Z_{(a,h)}$ and the sizes of $A,B,C$ explicitly in terms of $\delta$ to finish the proof.
	    For any $(p,f:\cC_{p,\cT}\to X)\in Z$ and any $w\in V\cup E$, it follows from \eqref{lip-bound} and the definition of $\dist_{w}$ (see Definition \ref{T-decomp-metric}) that $f|_{R_w(p)}$ is has Lipschitz constant $\le\Lambda_w$ with respect the distance function induced on $R_w(p)$ via its inclusion in $\bP_\cT$. Therefore, by the same argument as in the proof of Lemma \ref{map-space-net}, we find that each of the sets $Z_{(a,h)}$ has diameter $\le 4\gamma<4\delta$.\\\\
	    With $\mu = \sum_{v\in V}\deg(v)$ and $\Lambda = \max_w\Lambda_w$, Lemma \ref{sphere-net} implies that we can take $|B_w|\le (8\pi)^\mu(\frac\Lambda\delta)^{2\mu}$ for each $w\in V\cup E$. Moreover, note that $|V| + |E| = \mu + 1$. Next, note that since $\max\{2\theta, \tau\}<1$, the set $\cM_\cT(\theta,\tau,\bm\alpha)\subset\bA_\cT = \bC^{\mu-1}$ is contained within the set of points where each (complex) coordinate has modulus $\le 1$. Thus, we can choose $A$ such that $|A|\le (\frac4{\delta^2})^{\mu-1}$. Observe also that we have
	    \begin{align}
	        |C|\le \sigma\delta^{-2k}\cdot\nu(K,\lambda_0)
	    \end{align}
	    with $2k = \dim X$ and some explicit constant $\sigma = \sigma(X_0)$, depending on the geometry of $X$. Putting everything together, we find that we have covered $Z$ using finitely many sets each of diameter $<4\delta$, with the number of sets in this cover being $\le|A|\cdot(1 + |C|)^{|B|}$.
	\end{proof}
	\begin{remark}
	    Notice that in the estimate of Lemma \ref{holo-map-tot-bound}, the values of $\eta,\theta,\tau,\bm\alpha$, the individual values of the entries of $\bm\Lambda$, and the set $F$ play no role. It is possible to use the specific values of these to make a get a more precise estimate (i.e., one which is $o$\eqref{eff-net-size-estimate} as $\delta\to 0$).
	\end{remark}
	\end{section}
	\begin{section}{Covering the moduli space by compact sets}\label{covering}
	The aim of this section is to show how, given $\ell\ge 0$ and $A>0$, to cover the moduli space
	\begin{align}
	    \Mbar_{0,\ell}(X,J;K)^{\le A}    
	\end{align}
	with (the images of the energy $\le A$ loci of) finitely many spaces of the form
	\begin{align}
	    \cM_{\cT,F}(X,J;\theta,\tau,\bm\alpha;K,\eta,\bm\Lambda)
	\end{align}
	discussed in \textsection\ref{map-family}. For this whole cover, we will be able to arrange the following.
	\begin{enumerate}[(i)]
	    \item The number of elements in $F$ is exactly $\ell$,
	    \item $\theta,\tau,\eta$  are constants independent of $A,\ell$ and,
	    \item $\Lambda,\bm\alpha$ and the (maximum possible) number of edges of $\cT$ are given by explicit functions of $A,\ell$.
	\end{enumerate}
	\begin{remark}[Choice of constants]\label{cov-const}
	    Before stating the main result of this section, we choose some positive constants -- $\epsilon$ (an absolute constant) and $\lambda$ (depending on $\epsilon$ and the bounds on the geometry of $X$) -- which will play an important role in the estimates to follow. The choices of these constants are only subject to the following constraints.
	    \begin{align}\label{const-1}
	        8\epsilon&\le1\\
	        \label{const-2}
	        9\lambda\sqrt C&\le l\epsilon^2(1-\epsilon)\\
	        \label{const-3}
	        \pi\lambda^2&\le q\epsilon^2
	    \end{align}
	    with equality in (at least) one of \eqref{const-2} and \eqref{const-3}.
	    In \eqref{const-2}, the constants $C$ and $l$ are the ones appearing in Lemmas \ref{mvi} and \ref{exp-decay} (respectively). Similarly, in \eqref{const-3}, $q$ is the constant appearing in Lemma \ref{mvi}.
	\end{remark}
	\begin{theorem}\label{main-covering-theorem}
	    There exist a geometric constant $M = M(X_2)\ge 1$ such that for any positive $\epsilon,\lambda$ satisfying \eqref{const-1}--\eqref{const-3}, the following statement holds. Given an integer $\ell\ge 0$, a number $A\ge 0$ and a genus $0$ stable $J$-holomorphic map $(\Sigma,x_1,\ldots,x_\ell,f)$ into a compact subset $K\subset X$ and with energy $E(f)\le A$, we can find a stable rooted tree $\cT$ with
	    \begin{align}
	        |\normalfont E_\text{ext}|\le\lfloor A/\lambda^2\rfloor + \ell + 1,
	    \end{align}
	    a subset $F\subset\normalfont E_\text{ext}$, in bijection with $\{x_1,\ldots,x_\ell\}$, and numbers $\theta,\tau,\bm\alpha,\eta,\bm\Lambda$, depending explicitly on $\epsilon,\lambda,M, \cT$, such that $(\Sigma,x_1,\ldots,x_\ell,f)$ is isomorphic as a stable map to a point of the metric space
	    \begin{align}
	        \cM_{\cT,F}(X,J;\theta,\tau,\bm\alpha;K,\eta,\bm\Lambda).
	    \end{align}
	    introduced in Definition \ref{compact-space-of-maps}. In fact, we can take
	    \begin{align}
	        \theta &= \epsilon \\
	        \tau &= 4\epsilon \\
	        \alpha_v &= (4\epsilon^3)^{\deg(v)}\\
	        \eta &= \textstyle\frac{1}{3\sqrt C}\cdot\textstyle\frac{\lambda}{\lambda_0}\\
	        \Lambda_v &= \textstyle\frac{9\pi\sqrt C}{\epsilon^2}\cdot\alpha_v^{-1}\\
	        \Lambda_e &= \textstyle\frac{M}{\epsilon^2}
	    \end{align}
	    for vertices $v\in V$ and edges $e\in E$. Here, $C$ and $\lambda_0$ are the constants from Lemmas \ref{mvi} and \ref{ell-boot} respectively.
	\end{theorem}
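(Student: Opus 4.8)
The plan is to prove Theorem \ref{main-covering-theorem} by running the classical proof of convergence-modulo-bubbling for a single stable map, but tracking all constants and inserting new marked points wherever energy concentrates, so as to bound the local Lipschitz constants. First I would reduce to the case of a smooth domain $\Sigma = \bP^1$ (with its $\ell$ marked points), since the general prestable case follows by applying the smooth case to each component and then assembling the resulting rooted trees along the nodes of $\Sigma$, using the splitting/gluing operations on trees from Appendix \ref{tree-basics} and the compatibility of the $\cT$-decomposition under splitting (Remark \ref{compact-curve-prop}(iv), Definition \ref{T-decomp}). So assume $f : \bP^1 \to X$ is $J$-holomorphic with $E(f) \le A$ and image in $K$, together with marked points $x_1,\dots,x_\ell$.

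Next I would carry out a \textbf{bad-point selection}. Using the $\epsilon$-renormalization procedure (as in \cite[Chapter 4]{McSa}), I would cover $\bP^1$ by finitely many conformal discs adapted to a fixed round-metric grid, and identify those discs on which the energy exceeds the threshold $\pi\lambda^2$ (see \eqref{const-3}); by the mean value inequality (Lemma \ref{mvi}) and the long-cylinder exponential decay estimate (Lemma \ref{exp-decay}), together with the constraints \eqref{const-1}--\eqref{const-3}, energy concentration at a point forces a definite amount of energy $\ge \pi\lambda^2$ to be "trapped" there. Hence the number of such bad points is at most $\lfloor A/\lambda^2 \rfloor$, and adding the $\ell$ original marked points and the root gives $|E_{\text{ext}}| \le \lfloor A/\lambda^2 \rfloor + \ell + 1$. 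I would then build the rooted tree $\cT$ so that each bad point becomes an exterior (half) edge and, when bad points cluster at multiple scales, introduce interior edges and additional vertices to separate the scales — this is exactly what the parameters $\gamma_e$ (neck sizes) and $(z_{v,e},\rho_{v,e})$ (positions and radii) in $\bA_\cT$ are designed to record. The point $p \in \cM_\cT$ is read off from the chosen centers and radii, and the biholomorphism $\varphi$ is the resulting identification $\Sigma \cong \cC_{p,\cT}$; one must check $p$ lies in $\cM_\cT(\theta,\tau,\bm\alpha)$ for the stated $\theta = \epsilon$, $\tau = 4\epsilon$, $\alpha_v = (4\epsilon^3)^{\deg v}$, which is where the separation inequalities \eqref{M-def1}--\eqref{M-def4} get verified against the grid geometry.

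The \textbf{Lipschitz bounds} \eqref{lip-bound} are then established region by region in the $\cT$-decomposition. On a thick region $R_v(p)$: by construction the energy of $f$ on the corresponding disc is below the threshold $q\epsilon^2$ (after the renormalization has pushed all concentration into the necks/ends), so the mean value inequality (Lemma \ref{mvi}) gives a pointwise bound $|df| \le c \sqrt{E}/(\text{local scale})$; converting to the thick metric $\dist_v$ — which involves the rescaling by $\rho_{v,e_v}^*$, hence the factor $\alpha_v^{-1}$ — yields $\text{Lip}(f|_{R_v(p)}, \dist_v) \le \frac{9\pi\sqrt C}{\epsilon^2}\alpha_v^{-1}\lambda_0 = \Lambda_v\lambda_0$, matching the stated formula. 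On a neck $R_e(p)$, $e \in E_{\text{int}}$: this is a long cylinder of small energy (the energy there is $\le \pi\lambda^2 \le l\epsilon^2$-ish via \eqref{const-2}), so $\|df\|_\infty \le l$ on it and the exponential decay estimate \eqref{exp-decay-estimate} with $k=1$ applies; combined with the comparison between the cylindrical metric and the neck metric $\dist_e$ (Lemma \ref{annulus-dist}, which costs a universal factor $16\pi$), this gives $\text{Lip}(f|_{R_e(p)},\dist_e) \le \frac{M}{\epsilon^2}\lambda_0 = \Lambda_e\lambda_0$ for a geometric constant $M = M(X_2)$. Ends $e \in E_{\text{ext}}$ are treated like necks (one side only). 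Finally the energy lower bound \eqref{terminal-energy-lb} on $R_e(p)$ for $e \in E_{\text{ext}}\setminus F$ holds because each such $e$ came from a genuine bad point, which by the selection procedure retained energy $\ge \pi\lambda^2 = (\eta\lambda_0)^2 \cdot (\text{const})$ — more precisely $\eta = \frac{1}{3\sqrt C}\cdot\frac{\lambda}{\lambda_0}$ is chosen so $(\eta\lambda_0)^2 \le \pi\lambda^2/(9C) \le E(f,R_e(p))$ after accounting for the mean-value constant. Stability of $\cT$ follows because every vertex carries $\ge 3$ special points (nodes or marked points) by the bad-point selection, and the remaining claim — that $(\Sigma,x_1,\dots,x_\ell,f)$ is isomorphic as a stable map to the resulting point of $\cM_{\cT,F}(X,J;\theta,\tau,\bm\alpha;K,\eta,\bm\Lambda)$ — is then immediate from the construction of $\varphi$.

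The main obstacle I expect is the \textbf{bookkeeping of scales in the bad-point selection}: ensuring that a single renormalization pass (rather than an infinite regress) produces a tree whose vertex-radius parameters $\rho_{v,e}$ satisfy the geometric-series separation estimates forcing $p \in \cM_\cT(\theta,\tau,\bm\alpha)$ with the \emph{specific} $\alpha_v = (4\epsilon^3)^{\deg v}$, and simultaneously that the leftover energy on thick regions stays below $q\epsilon^2$ while each deleted neck keeps $\ge \pi\lambda^2$. This requires choosing the discs in the cover at geometrically spaced radii (ratios controlled by $\epsilon$), arguing by downward induction on scale that each refinement either terminates or sheds $\ge \pi\lambda^2$ of energy, and then translating the nested-disc combinatorics into the tree $\cT$ via the edge-orientation and nearest-common-ancestor formalism of Appendix \ref{tree-basics} — the polynomial $z^u_{v,e}$ of \eqref{marked-point-across-components} is precisely the device for expressing a marked point's coordinate as seen from an ancestor vertex, and matching it to the actual position of a bad point on $\Sigma$ is the delicate step. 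The analytic inputs (Lemmas \ref{mvi}, \ref{exp-decay}, \ref{annulus-dist}) are all in hand; the work is in the scale combinatorics and in verifying the six explicit constant formulas drop out with the stated dependence on $\epsilon,\lambda,M,\cT$ only.
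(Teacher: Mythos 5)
Your outline coincides with the paper's strategy: reduce to irreducible components (the paper's proof of Theorem \ref{main-covering-theorem} is exactly this reduction to Proposition \ref{final-grad-bound}, with $M=2M'$ to handle the degenerate necks formed by two end regions meeting at a node), select concentration points, organize them into a multi-scale rooted tree via Lemmas \ref{cluster-selection}--\ref{reduce} and Definition \ref{tree-assoc-bubble-config}, and verify the Lipschitz bounds region by region. However, one substantive idea is missing from your sketch, and it is the one from which all six constant formulas actually drop out: the uniform gradient bound away from the bubble set. The paper does not select bad points from a grid of conformal discs. It first fixes a gauge in which $\|df_\Phi\|_\infty\le\lambda$ on the affine chart (Step 1 of Proposition \ref{c-mod-b}), introduces the energy-threshold-radius function $\fr(w)$ (the smallest radius at which the ball about $w$ captures energy exactly $\lambda^2$), proves $\fr(w)\,|df_\Phi(w)|\le\lambda\sqrt C$ via Lemma \ref{mvi}, and then chooses a \emph{maximal} bubble configuration $(T,\rho)$ with $\rho=\fr$ off the marked points, subject to the separation condition $\rho(x)+\rho(y)\le\frac{\epsilon^2}4|x-y|$. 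Maximality forces every point of large gradient to satisfy $\frac{\epsilon^2}{4}|z-x|<\fr(z)+\rho(x)$ for some bubble point $x$, which converts the threshold-radius estimate into $|df_\Phi(z)|\le\frac{8\sqrt C}{\epsilon^2}\cdot\frac{\lambda}{|z-S|}$ (Proposition \ref{c-mod-b}(iii)). That single inequality is the input for every region: on $R_v(p)$ the deleted discs have radius $\ge\alpha_v=(4\epsilon^3)^{\deg v}$, so $|z-S|\ge\alpha_v$, and an intrinsic-path argument (replacing chords through the deleted discs by boundary arcs, at the cost of a factor $\pi$) yields $\Lambda_v=\frac{9\pi\sqrt C}{\epsilon^2}\alpha_v^{-1}$; on necks and ends it gives $\|df\|_\infty\le l$ in cylindrical coordinates via \eqref{const-2}, which is what licenses Lemma \ref{exp-decay}.

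Your substitute on thick regions --- ``energy of $f$ on the corresponding disc is below $q\epsilon^2$, then MVI'' --- is false as stated (a thick component can carry a large fraction of $A$ when many bubbles hang off it); what is true, and what you need, is that the ball of radius comparable to $\epsilon^2|w-S|$ about each $w$ carries energy $\le\lambda^2$, and that lower bound on the local scale is precisely what the maximality argument delivers and what a fixed-scale grid cannot. Likewise the separation ratios $(4\epsilon^3)^k$ between nested clusters come from Lemma \ref{cluster-selection} applied to the finite metric space $T$ with $a_i=(4\epsilon^3)^i$, not from grid geometry; the count $|E_{\text{ext}}|\le\lfloor A/\lambda^2\rfloor+\ell+1$ comes from disjointness of the threshold balls each carrying exactly $\lambda^2$ (not $\pi\lambda^2$); and \eqref{terminal-energy-lb} uses the containment $B^2(x,r(x))\subset R_e(p)$ guaranteed by \eqref{rescaling-rad-v-cutoff-rad}, together with \eqref{e-l-b} for the root end. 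You have correctly located the delicate multi-scale combinatorics, but without the $|z-S|^{-1}$ gradient bound as the organizing estimate, the region-by-region verification will not produce the stated $\bm\Lambda$.
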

	\begin{proof}
	    Consider each irreducible component of $\Sigma$ as the domain of a smooth stable map (with additional marked points given by the nodes) and apply Proposition \ref{final-grad-bound} to each component inductively. Notice that end regions (corresponding to same node) of two irreducible components will together form a neck region for the whole stable map $(\Sigma,x_1,\ldots,x_\ell,f)$. Taking $M = 2M'$ will suffice to establish the Lipschitz bound on such (degenerate) neck regions, where $M'$ is the constant provided by Proposition \ref{final-grad-bound}.
	\end{proof}
	\subsection{Gradient bound modulo bubbling}
	We will first prove Proposition \ref{c-mod-b}, which is a quantitative analogue of \cite[Theorem 4.6.1]{McSa}.
	\begin{definition}\label{bubble-config-def}
	    Let $\epsilon$ be the constant introduced in Remark \ref{cov-const}. A \emph{\textbf{bubble configuration}} is simply a pair $(T,\rho)$ such that $T\subset\bC$ is a finite set (of \emph{\textbf{bubble points}}) and $\rho:T\to[0,\infty)$ is a function (called the \emph{\textbf{radius function}}). The bubble configuration $(T,\rho)$ is said to be \emph{\textbf{of type $\epsilon$}} if it has the following properties.
	    \begin{enumerate}[(i)]
	        \item We have $\sup_{z\in T}|z|\le\epsilon$.
	        \item We have $\rho(z)\le4\epsilon$ for all $z\in T$.
	        \item For all $x,y\in T$ such that $x\ne y$, we have the estimate
	        \begin{align}
	            \rho(x) + \rho(y)\le \textstyle\frac{\epsilon^2}4|x - y|
	        \end{align}
	    \end{enumerate}
	    If we additionally have $0\in T$ and equality in (i), we say that the bubble configuration $(T,\rho)$ is \emph{\textbf{standard}}. Note that if $|T|\ge 2$, then condition (ii) is superfluous in view of (i) and (iii).
	\end{definition}
	\begin{proposition}\label{c-mod-b}
	    Let $A\ge 0$ be a real number, $\ell\ge 0$ be an integer and let $(\Sigma,x_1,\ldots,x_\ell,f)$ be a stable $J$-holomorphic map with $\Sigma\simeq\bP^1$ and $E(f)\le A$. There is then an isomorphism $\Phi:\bP^1\to\Sigma$ and a standard bubble configuration $(S,r)$ of type $\epsilon$ with the following properties.
	    \begin{enumerate}[\normalfont(i)]
	        \item If $\ell\ge 1$, then we have $\Phi([1:0]) = x_1$ and we define $S_0\subset\bC$ to be the set of $z$ such that $\Phi([z:1]) = x_i$ for some $1<i\le\ell$. If $\ell = 0$, we define $S_0 = \varnothing$. In both cases, we have $S_0\subset S$ and $r^{-1}(0) = S_0$.
	        \item Defining $f_\Phi(z)=(f\circ\Phi)([z:1])$, we have the energy identity
	        \begin{align}\label{energy-cluster}
	            E(f_\Phi,B^2(x,r(x))) = \lambda^2
	        \end{align}
	        for each $x\in S\setminus S_0$. In particular, $|S|\le |S_0| + \lfloor A/\lambda^2\rfloor$. When $\ell = 0$, we also have the energy bound
	        \begin{align}\label{e-l-b}
	            E(f_\Phi,\bC\setminus B^2(r'))\ge\textstyle\frac19C^{-1}(\lambda/r')^2
	        \end{align}
	        for all $r'\ge 1$, where $C$, resp. $\lambda$, are the constants from Lemma \ref{mvi}, resp. Remark \ref{cov-const}.
	        \item For all $z\in\bC$, we have the gradient bound
	        \begin{align}\label{c-mod-b-grad-bound}
	            |df_\Phi(z)|\le\frac{8\sqrt C}{\epsilon^2}\cdot\frac{\lambda}{|z - S|}
	        \end{align}
	        where the constants $C,\lambda$ are as in {\normalfont\text{(ii)}} above and we are using the standard metric on $\bC$.
	    \end{enumerate}
	\end{proposition}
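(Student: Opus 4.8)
The plan is to mimic the classical proof of "convergence modulo bubbling" from \cite[Theorem 4.6.1]{McSa}, but run it quantitatively for a single map rather than for a sequence, extracting the bubble configuration $(S,r)$ as a finite byproduct. First I would handle the marked points: if $\ell\ge 1$, precompose with an automorphism of $\bP^1$ so that $x_1$ sits at $[1:0]$ and all remaining marked points $x_2,\dots,x_\ell$ land in the disc $B^2(\epsilon)$ (this is possible since $\mathrm{PSL}_2(\bC)$ acts transitively enough and $\epsilon$ is fixed); record their affine coordinates as $S_0$ and set $r\equiv 0$ on $S_0$. These are "forced" bubble points and do not carry energy.

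Next comes the main construction, which is an iterated "rescaling/hard-rescaling" argument. I would consider the function $z\mapsto|df_\Phi(z)|$ and look at where it is large relative to $\lambda/|z-S|$; whenever the target gradient bound \eqref{c-mod-b-grad-bound} fails somewhere, the mean value inequality (Lemma \ref{mvi}) together with \eqref{const-3}, which guarantees $\pi\lambda^2\le q\epsilon^2$ so that energy on the relevant small balls stays below the threshold $q$, forces a definite amount of energy --- namely $\lambda^2$ --- to concentrate in a small ball. Each time this happens, I add a new bubble point $x$ to $S$ with radius $r(x)$ chosen so that the energy identity \eqref{energy-cluster} holds exactly, $E(f_\Phi,B^2(x,r(x)))=\lambda^2$; the constraints (i)--(iii) of type $\epsilon$ (points in $B^2(\epsilon)$, radii $\le 4\epsilon$, well-separated relative to radii) are maintained by choosing the ball as small as possible and using $8\epsilon\le 1$ from \eqref{const-1}. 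Since $E(f)\le A$ and each genuine bubble point consumes exactly $\lambda^2$ of energy disjointly (here one must check the balls $B^2(x,r(x))$ are essentially disjoint, so that the energies add), the process terminates after at most $\lfloor A/\lambda^2\rfloor$ steps, giving $|S|\le|S_0|+\lfloor A/\lambda^2\rfloor$. Finally I would arrange $0\in S$ with $|{\cdot}|$ achieving its sup at $\epsilon$ on $S$ by a last affine adjustment, making $(S,r)$ \emph{standard}; and in the case $\ell=0$ I would use the $\omega$-tameness/energy lower bound near infinity (again via Lemma \ref{mvi} applied on the complement of large discs, and the fact that a nonconstant holomorphic sphere carries at least a definite energy) to deduce \eqref{e-l-b}.

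The gradient bound \eqref{c-mod-b-grad-bound} is then exactly the statement that the termination condition holds: once no more bubble points can be extracted, for every $z$ the ball $B^2(z,\tfrac12|z-S|)$ has energy $<\lambda^2\le q\epsilon^2/\pi$, so Lemma \ref{mvi} with $r=\tfrac12|z-S|$ yields $|df_\Phi(z)|^2\le 4C\,E(f_\Phi,B^2(z,\tfrac12|z-S|))/|z-S|^2 \le 4C\lambda^2/|z-S|^2$ up to the $\epsilon^2$ factors, which after adjusting constants gives the stated $\tfrac{8\sqrt C}{\epsilon^2}\cdot\tfrac{\lambda}{|z-S|}$.

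I expect the main obstacle to be the bookkeeping that makes the extraction \emph{effective and terminating} while simultaneously preserving the type-$\epsilon$ separation conditions: one must show that a newly created bubble ball can always be taken disjoint from (or nested compatibly with) the previously chosen ones, so that the energy accounting $|S\setminus S_0|\cdot\lambda^2\le A$ is valid, and that the separation inequality $\rho(x)+\rho(y)\le\tfrac{\epsilon^2}{4}|x-y|$ survives. This is where the specific numerology of \eqref{const-1}--\eqref{const-3} gets used, and it is the delicate heart of the argument; the rest is a quantitative but routine unwinding of the mean value inequality.
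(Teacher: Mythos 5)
Your overall strategy --- greedy extraction of points where energy concentrates, each genuine bubble point carrying exactly $\lambda^2$ of energy on a ball of threshold radius, disjointness of these balls via the type-$\epsilon$ separation condition giving $|S\setminus S_0|\le\lfloor A/\lambda^2\rfloor$, and the gradient bound obtained from the termination condition together with Lemma \ref{mvi} --- is essentially the argument in the paper. But there are two genuine gaps.

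First, your ``last affine adjustment'' to make $(S,r)$ \emph{standard} presupposes $|S|\ge 2$: standardness requires $0\in S$ \emph{and} $\sup_{z\in S}|z|=\epsilon$ exactly, which no affine change of coordinates can achieve if the extraction produces only one bubble point and there are no marked points. Ruling out the case $|S|=1$, $S_0=\varnothing$ (and, separately, $S=\varnothing$) is not bookkeeping. The paper does it by passing to cylindrical coordinates on the plane punctured at the unique bubble point, noting that the already-established gradient bound together with \eqref{const-2} makes Lemma \ref{exp-decay} applicable there, and concluding from \eqref{action-diff} that $\int f^*\omega=0$, which contradicts non-constancy via tameness. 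Nothing in your proposal supplies this step, and without it the renormalization at the end cannot be carried out.

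Second, you omit the initial gauge normalization that the paper performs before any bubble points are selected: choose the parametrization so that $\|df_\Phi\|_\infty$, measured in the round metric, is attained at the point that becomes $\infty$ in the affine chart, and rescale so that the resulting map $g$ in the chart centered there satisfies $|dg(0)|=\|dg\|_\infty=\lambda$. This normalization does two jobs you cannot do without it. (a) It yields \eqref{e-l-b}: Lemma \ref{mvi} applied at the origin of the $g$-chart (using \eqref{const-3} to stay under the threshold $q$) gives $E(g,B^2(r))\ge C^{-1}(\lambda r)^2$, which transforms into the $r'$-dependent bound \eqref{e-l-b}; your appeal to ``a nonconstant sphere carries a definite energy'' produces only an $r'$-independent constant and does not localize the energy near infinity. (b) It gives the decay $|df_\Phi(z)|\le\lambda/|z|^2$, which is what verifies \eqref{c-mod-b-grad-bound} for $|z|\ge1$. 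Your termination argument does not cover that region: for $|z|$ large the ball $B^2(z,\tfrac12|z-S|)$ has radius comparable to $|z|$ and may well contain energy $\ge\lambda^2$, so the mean value inequality alone gives nothing there.
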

	\noindent The proof is carried out in six steps that follow.\\\\
	\emph{\underline{Step 1}} (Intial choice of gauge) There exists an isomorphism $\varphi:\bP^1\to\Sigma$ such that the $J$-holomorphic map $g:\bC\to X$ given by $g(z) = (f\circ\varphi)([z:1])$ satisfies the following properties.
	\begin{enumerate}[(i)]
	    \item We have the estimate $\|dg\|_\infty\le\lambda$, with equality if $\ell = 0$.
	    \item If $\ell = 0$, then for all $0\le r\le 1$, we have the lower bound $E(g,B^2(r))\ge C^{-1}(\lambda r)^2$. If $\ell\ge 1$, then we have $\varphi([0:1]) = x_1$ and $\varphi^{-1}(x_j) = [1:z_j]$ for some $|z_j|\le\epsilon$ for each $1<j\le\ell$.
	\end{enumerate}
	Here, we are using the standard metric on $\bC$ and the constants $C,\lambda$ are as before.
	\begin{proof}
	    If $\ell\ge 1$, start with any isomorphism $\psi:\bP^1\to\Sigma$ for which $\psi([0:1]) = x_1$. Now, taking the isomorphism $\varphi([z:w]) = \psi([z:Tw])$ for a positive number $T\gg 1$ gives the desired result.\\
	    \indent If $\ell = 0$, then start with any isomorphism $\varphi:\bP^1\to\Sigma$ for which the maximum $M$ of $|d(f\circ\psi)|_{\omega_{\bP^1}}$ occurs at $[0:1]$. Define $\varphi([z:w]) = \psi([\lambda z:2Mw])$ and notice that we now have $\lambda = |dg(0)| = \|dg\|_\infty$. Using \eqref{const-3} we get $\lambda^2\le q\le Cq$ and thus, by Lemma \ref{mvi}, we get the asserted energy lower bound.
	\end{proof}
	\noindent Define $h:\bC\to X$ by $h(z) = (f\circ\varphi)([1:z])$. Choose $z_j\in\bC$ such that $\varphi([1:z_j]) = x_j$ for $1<j\le\ell$. In view of Step 1(i), we have the estimate $|dh(z)|\le\lambda/|z|^2$ for all $z\in\bC$.\\\\
	\noindent\emph{\underline{Step 2}} (Energy threshold radii) We skip this step if $f$ is constant. Assuming $f$ is non-constant, for each $w\in\bC$, define $\fr(w)$ to be the smallest number $\hat\rho>0$ such that $E(h,B^2(w,\hat\rho)) = \lambda^2$. Then, the function $\fr:\bC\to(0,\infty)$ satisfies the following properties.
	\begin{enumerate}[(i)]
	    \item We have $\inf_{w\in\bC}\fr(w) > 0$.
	    \item $\fr$ is $1$-Lipschitz on $\bC$. In particular, it is continuous.
	    \item At any $w\in\bC$, we have $\fr(w)\cdot|dh(w)|\le\lambda\sqrt C$.
	\end{enumerate}
	\begin{proof}
	    Since $h$ is non-constant, we conclude that $E(h)>q\ge\lambda^2$, where the second inequality comes from \eqref{const-3} and \eqref{const-1}. Indeed, if not, then we could apply Lemma \ref{mvi} to large balls with arbitrary centres to conclude $dh \equiv 0$. This shows that $\fr$ is well-defined and takes values in $(0,\infty)$.
	    The bound (iii) now follows immediately by applying Lemma \ref{mvi} to $B^2(w,\fr(w))$.\\
	    \indent To show (ii), we argue by contradiction. Suppose $x,y\in\bC$ are such that $\fr(x) < \fr(y) - |x - y|$. Then, $B^2(x,\fr(x))$ must be properly contained in $B^2(y,\fr(y))$ and both of them carry $\lambda^2$ of the energy $E(h)$, which is a contradiction.\\
	    \indent Finally, to prove (i), start by observing that $|dh(w)|\to 0$ as $|w|\to\infty$, since $h$ extends smoothly at $\infty$. Now consider any given $\gamma>0$. Let $K_\gamma$ be the (compact) set of points which lie at a distance $\le\gamma^{-1}$ from the set $\{z\in\bC\;|\;\sqrt\pi|dh(z)|\ge\gamma\lambda\}$. Then, for any $w\in\bC\setminus K_\gamma$, we must have $|\fr(w)|\ge\gamma^{-1}$. Indeed, if not, then we would have the estimate
	    \begin{align}
	        E(h,B^2(w,\fr(w)))\le\pi\fr(w)^2\left(\frac{\gamma\lambda}{\sqrt\pi}\right)^2 < \lambda^2
	    \end{align}
	    which contradicts the defintion of $\fr$. Since $\gamma$ was arbitrary, this proves that $\fr(w)\to\infty$ as $|w|\to\infty$. This now implies the (weaker) statement that $\inf_{w\in\bC}\fr(w)>0$.
	\end{proof}
	\noindent\emph{\underline{Step 3}} (Selection of bubble points) Define $T_0 = \{z_j\;|\;1<j\le\ell\}$. Then, there exists a bubble configuration $(T,\rho)$ satisfying conditions (i), (iii) of Definition \ref{bubble-config-def} with the following additional properties.
	\begin{enumerate}[(a)]
	    \item We have $T_0\subset T$. Moreover, $\rho\equiv 0$ on $T_0$ and, if $h$ is non-constant, then $\rho\equiv\fr$ on $T\setminus T_0$.
	    \item For any $z\in\bC$ such that $|dh(z)|\ge\lambda/\epsilon^2$, there exists a point $x\in T$ such that the inequalities $\rho(x)\le\fr(z)$ and $\frac{\epsilon^2}{4}|z - x|<\fr(z)  +\rho(x)$ hold.
	    \item We have the bound $|T|\le|T_0| + \lfloor A/\lambda^2\rfloor$.
	\end{enumerate}
	\begin{proof}
        If $h$ is constant, then we can simply take $T = T_0$ and $\rho\equiv 0$. Now, assume that $h$ is non-constant. Consider the set of all bubble configurations $(T,\rho)$ satisfying conditions (i), (iii) of Definition \ref{bubble-config-def} and having property (a) above and the property (P) below.
        \begin{enumerate}[(P)]
            \item Let $B$ be the set of $z\in\bC$ such that $|dh(z)|\ge\lambda/\epsilon^2$ and let $B'$ be the set of points $z\in B$ such that $\frac{\epsilon^2}{4}|z - x|\ge \fr(z) + \rho(x)$ for all $x\in T$. Then, for all $z\in B\setminus B'$, there exists $x\in T$ such that $\rho(x)\le\fr(z)$ and $\frac{\epsilon^2}{4}|z - x|<\fr(z) + \rho(x)$.
        \end{enumerate}
        Obviously, the set of such $(T,\rho)$ is partially ordered (by inclusion of $T$ and restriction of $\rho$) and non-empty (since we can take $T = T_0$ and $\rho\equiv 0$). Moreover, since $\frac{\epsilon^2}4<1$ from \eqref{const-1}, it follows from condition (iii) of Definition \ref{bubble-config-def} that $\{B^2(x,\fr(x))\}_{x\in T\setminus T_0}$ are pairwise disjoint and therefore,
        \begin{align}
            |T\setminus T_0|\le\lfloor E(h)/\lambda^2\rfloor\le\lfloor A/\lambda^2\rfloor.
        \end{align}
        Thus, we may take a maximal such $(T,\rho)$. It satisfies (a), (c), (P) by construction and we are left to check that it also satisfies (b). Suppose it does not. Then, the (compact) set $B'$ from (P) must be non-empty. Let $x\in B'$ be a point where $\fr|_{B'}$ attains its minimum. Define $T' = T\cup\{x\}$ and $\rho'$ on $T'$ by $\rho'|_{T} = \rho$ and $\rho'(x) = \fr(x)$. By Step 1(i) and $|dh(x)|\ge\lambda/\epsilon^2$, we get $|x|\le\epsilon$. It is now easy to verify that $(T',\rho')$ is also a bubble configuration satisfying conditions (i), (iii) of Definition \ref{bubble-config-def} and enjoying the properties (a) and (P). This contradicts the maximality of $(T,\rho)$.
	\end{proof}
	\noindent\emph{\underline{Step 4}} (Gradient bound) $T$ is non-empty. Moreover, for any $w\in\bC$, we have the gradient estimate
	\begin{align}\label{prelim-grad-bound}
	    |dh(w)|\le\frac{8\sqrt C}{\epsilon^2}\cdot\frac{\lambda}{|w - T|}.
	\end{align}
	\begin{proof}
	    Suppose $T = \varnothing$. Then, $T_0 = \varnothing$ and $h$ must be non-constant by stability. In view of (b) above, we deduce that we must have $\|dh\|_{\infty}\le\lambda/\epsilon^2$. Using this to estimate the energy of $h$ on $B^2(\epsilon)$ and using Step 1(i) to estimate the energy of $h$ on $\bC\setminus B^2(\epsilon)$, we see that $E(h)\le\pi(\lambda\epsilon^{-1})^2\le q$, where the last inequality is just \eqref{const-3}. Now, arguing as in the beginning of Step 2, we get a contradiction to the non-constancy of $h$.\\
	    \indent Now, suppose $w\in\bC$ is any point. If $|dh(w)|\ge\lambda/\epsilon^2$, then $|w|\le\epsilon$ we can find a point $x\in T$ such that
	    \begin{align}
	        \textstyle\frac{\epsilon^2}4|w-x|&<\fr(w) + \rho(x)\\
	        \rho(x)&\le\fr(w)
	    \end{align}
	    and thus, by Step 2(iii), we get the desired gradient bound. On the other hand, if $|dh(w)|\le\lambda/\epsilon^2$ and $|w|\le 1$, then we can use $|w-T|\le 1+\epsilon$ to verify the gradient estimate. Finally, if $|w|\ge 1$, then Step 1(i) gives $|dh(w)|\le\lambda/|w|^2\le\lambda/|w|\le\lambda(1+\epsilon)/|w-T|$, which again gives the desired gradient estimate. Here, we have used $\sup_{x\in T}|x|\le\epsilon\le\epsilon|w|$ to estimate $\frac{|w-T|}{|w|}$ by $1+\epsilon$.
	\end{proof}
	\noindent\emph{\underline{Step 5}} (At least two bubble points) In fact, $|T|\ge 2$.
	\begin{proof}
	    If $h$ is constant, then we must have $|T_0|\ge 2$ by stability. Thus, in view of Step 4, it is enough to rule out the case when $T_0 = \varnothing$, $|T| = 1$ and $h$ is non-constant. Indeed, let $a\in T$ be the unique element in this case. Consider the isomorphism $\nu_a:\bR\times S^1\to\bC\setminus\{a\}$ given by
	    \begin{align}
	        (s,t)\mapsto a + e^{-(s+it)}
	    \end{align}
	    and note that, by \eqref{const-2} and \eqref{prelim-grad-bound}, we have $\|d(h\circ\nu_a)\|_\infty\le l$, where $l$ is as in Lemma \ref{exp-decay}. Applying \eqref{action-diff} from Lemma \ref{exp-decay}, we deduce that $\int f^*\omega = \int (h\circ\nu_a)^*\omega = 0$. Since $f$ is $J$-holomorphic and $J$ is $\omega$-tame, this shows that $f$ must be constant, a contradiction.
	\end{proof}
	\noindent\emph{\underline{Step 6}} (Completion of proof)
	\begin{proof}
	    We are now ready to define $\Phi$, $S$ and $r$ and verify their properties. First, define the quantity
	    \begin{align}
	        \kappa = \sup_{x,y\in T}\epsilon^{-1}|x - y|.
	    \end{align}
	    We have $0<\kappa\le 2$ in view of Steps 3 and 5. Choose $x^*,y^*\in T$ such that $|x^*-y^*| = \epsilon\kappa$. Define the isomorphism $\Phi:\bP^1\to\Sigma$ by the formula
	    \begin{align}
	        [z:w]\mapsto \varphi([w:x^*w+\kappa z])
	    \end{align}
	    where $\varphi$ is as in Step 1. Define $S = \kappa^{-1}(T - x^*)$ and $r:S\to(0,\infty)$ by $z\mapsto \kappa^{-1}\rho(x^* + \kappa z)$.\\ \indent Clearly, $(S,r)$ is a bubble configuration satisfying (i), (iii) of Definition \ref{bubble-config-def}. Moreover, by considering the images of $x^*,y^*$ in $S$, we see that $0\in S$ and equality occurs in (i). Thus, $(S,r)$ is a standard bubble configuration of type $\epsilon$. Assertion (i) of Proposition \ref{c-mod-b} as well as the energy identity \eqref{energy-cluster} are clear from the corresponding statements for $(T,\rho)$. The gradient bound \eqref{c-mod-b-grad-bound} for $f_\Phi$ follows immediately from \eqref{prelim-grad-bound}. Thus, we are only left to verify \eqref{e-l-b}. For any $r'\ge 1$, we have
	    \begin{align}
	        E(h,\bC\setminus B^2(x^*,r'\kappa))\ge E(h,\bC\setminus B^2(|x^*| + r'\kappa)).
	    \end{align}
	    Combining this with $|x^*| + r'\kappa\le\epsilon + 2r'\le 3r'$ and the energy lower bound of Step 1(ii) gives \eqref{e-l-b}.
	\end{proof}
	\subsection{Combinatorics of bubble configurations}
	To proceed further in the proof of Theorem \ref{main-covering-theorem}, we will need to investigate the recursive structure of bubble configurations. For this we will use the next two elementary lemmas.
	\begin{lemma}\label{cluster-selection}
	    Suppose $(Z,d)$ is a finite metric space and assume $(a_i)_{i\ge 0}$ is a sequence of positive real numbers such that $a_{i+1}\le\frac12 a_i$ for all $i\ge 0$. Then, given any point $s\in Z$, there exists a subset $Z'\subset Z$ containing $s$ such that the following properties hold.
	    \begin{enumerate}[\normalfont(i)]
	        \item For $x\ne y$ in $Z'$, we have $|x - y| > a_{|Z'|-1}$.
	        \item There is a retraction $R:Z\to Z'$ such that $d(R(x),x)\le a_{|Z'|}$ for all $x\in Z$.
	    \end{enumerate}
	    Moreover, in this case, the map $R$ is uniquely determined by $Z'$.
	\end{lemma}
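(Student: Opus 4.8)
The plan is to build $Z'$ by a greedy ``maximal spread-out subset'' construction, choosing the scale adaptively according to the size of the set produced. Concretely, for each integer $n\ge 1$ call a subset $W\subset Z$ \emph{$n$-admissible} if $s\in W$, $|W| = n$, and every pair of distinct points of $W$ is at distance $> a_{n-1}$. I would first observe that the collection of $n$-admissible sets is finite (possibly empty) for each $n$, and that there is a largest $n = n_0$ for which an $n$-admissible set exists (it is nonempty for $n=1$, taking $W=\{s\}$, and bounded by $|Z|$). Fix any $n_0$-admissible set and call it $Z'$; then $|Z'| = n_0$ and property (i) holds by construction. The content to be extracted is that this particular $Z'$ also admits the retraction in (ii).

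For property (ii), the key step is: every point $x\in Z$ lies within distance $a_{n_0}$ of some point of $Z'$. Suppose not, so some $x$ has $d(x,y) > a_{n_0}$ for all $y\in Z'$. Since $a_{n_0}\le a_{n_0-1}$ (the sequence is decreasing, as $a_{i+1}\le\frac12 a_i$), in particular $d(x,y) > a_{n_0-1}\ge a_{n_0}$ is not quite what we need directly — instead I would argue that $Z'\cup\{x\}$ has $n_0+1$ elements with all pairwise distances $> a_{n_0}$: pairs inside $Z'$ are $> a_{n_0-1}\ge a_{n_0}$, and pairs involving $x$ are $> a_{n_0}$ by assumption. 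Hence $Z'\cup\{x\}$ is $(n_0+1)$-admissible, contradicting maximality of $n_0$. Therefore each $x$ has $d(x, Z') \le a_{n_0}$, and we may define $R(x)$ to be a nearest point of $Z'$; for $x\in Z'$ this nearest point is $x$ itself since the other points of $Z'$ are at distance $> a_{n_0-1}\ge a_{n_0}\ge 0$, so $R$ is a retraction, and $d(R(x),x)\le a_{n_0} = a_{|Z'|}$ as required.

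Finally, for uniqueness of $R$ given $Z'$: if $x\in Z$ had two distinct nearest points $y,y'\in Z'$ both within $a_{|Z'|}$ of it, then by the triangle inequality $d(y,y')\le 2a_{|Z'|}\le a_{|Z'|-1}$, using $a_{i+1}\le\frac12 a_i$ with $i = |Z'|-1$. But property (i) says $d(y,y') > a_{|Z'|-1}$, a contradiction. Hence the point of $Z'$ within $a_{|Z'|}$ of $x$ is unique, which forces $R(x)$ to be that point; thus $R$ is determined by $Z'$.

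I expect the main (though still mild) obstacle to be bookkeeping the off-by-one indices: the admissibility threshold at cardinality $n$ is $a_{n-1}$ while the retraction radius is $a_{n}$, and the halving hypothesis is exactly what bridges $2a_{|Z'|}$ down to $a_{|Z'|-1}$ in the uniqueness argument — so one must be careful to invoke $a_{i+1}\le\tfrac12 a_i$ at the right index and to note the sequence is monotone decreasing (needed in the maximality step). Everything else is a routine greedy/extremal argument.
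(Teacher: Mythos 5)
Your proposal is correct and follows essentially the same route as the paper: take a maximal subset containing $s$ whose pairwise distances exceed the cardinality-indexed threshold (the paper maximizes with respect to inclusion rather than cardinality, but the extension argument refuting a far-away point is identical), then deduce uniqueness of $R$ from the triangle inequality and $2a_k\le a_{k-1}$. The index bookkeeping you flag is handled correctly.
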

	\begin{proof}
	    Note that the subset $\{s\}$ satisfies (i). Pick a subset $s\in Z'\subset Z$ satisfying (i), which is maximal with respect to inclusion. If $Z'$ violates (ii), then there exists a point $z\in Z\setminus Z'$ such that $d(z,Z')>a_{|Z'|}$. But then $Z'\cup\{z\}$ also satisfies (i), and this is a contradiction to the maximality of $Z'$. Finally, the uniqueness of $R$ follows from the triangle inequality and the fact that $2a_k\le a_{k-1}$ for all $k\ge 1$.
	\end{proof}
	\noindent In the following lemma, the constant $\epsilon$ is again as in Remark \ref{cov-const}.
	\begin{lemma}\label{reduce}
	    Given a standard bubble configuration $(T,\rho)$ of type $\epsilon$, apply Lemma \ref{cluster-selection} to the finite metric space $T$, sequence $a_i = (4\epsilon^3)^i$ for $i\ge 0$ and $s = 0$ to obtain a subset $T'\subset T$ and a retraction $R:T\to T'$. We then have $k = |T'|\ge 2$. Define the function $\rho':T'\to(0,\infty)$ by the formula
	    \begin{align}\label{rho'-def}
	        \rho'(x) = \textstyle\frac14\epsilon^{-1}\cdot\max\{\epsilon^{-1}(4\epsilon^3)^k,\rho(x)\}.
	    \end{align}
	    Then, the balls $\{B^2(x,\rho'(x))\}_{x\in T'}$ are pairwise disjoint and contained in $B^2(2\epsilon)$. In fact, we have the following stronger estimate for all $x\ne y$ in $T'$.
	    \begin{align}\label{2-sep}
	        2\epsilon|x - y| \ge\rho'(x) + \rho'(y).
	    \end{align}
	    Next, for each $z\in T$, we have
	    \begin{align}
	        \label{cluster-diam}
	        |z-R(z)|&\le4\epsilon^2\cdot \rho'(R(z))\\
	        \label{rescaling-rad-v-cutoff-rad}
	        \rho(z)&\le4\epsilon\cdot\rho'(R(z))
	    \end{align}
	    and, if $z\ne R(z)$, then we additionally have $\rho'(R(z)) = \frac14\epsilon^{-2}(4\epsilon^3)^{k}$. Finally, for each $x\in T'$ and any $w\in B^2(x,\rho'(x))$, the minimum $|w - T|$ is attained by a point in $R^{-1}(x) = T\cap B^2(x,\rho'(x))$.
	\end{lemma}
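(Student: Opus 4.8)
The plan is to unwind \eqref{rho'-def} and push the constraint \eqref{const-1} (which forces $4\epsilon^3\le\tfrac12$, so that Lemma \ref{cluster-selection} applies with $a_i=(4\epsilon^3)^i$, and also $\epsilon<\tfrac12$) through a chain of elementary estimates. Write $a_i=(4\epsilon^3)^i$ and $k=|T'|$. First I would record two facts that drive everything. Since $(T,\rho)$ is standard there is a point of $T$ of modulus exactly $\epsilon\neq0$ while $0\in T$, so $|T|\ge 2$; hence by property (iii) of a type-$\epsilon$ bubble configuration and $|x-y|\le 2\epsilon$ for $x,y\in T$ we get $\rho(x)\le\tfrac{\epsilon^3}{2}$ for every $x\in T$. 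Second, directly from \eqref{rho'-def}, $\rho'(x)\ge\tfrac14\epsilon^{-2}a_k=\epsilon a_{k-1}$ for every $x\in T'$, with equality exactly when $\rho(x)\le\epsilon^{-1}a_k$, and otherwise $\rho'(x)=\tfrac14\epsilon^{-1}\rho(x)$; combined with $\epsilon\le\tfrac18$ this also yields the crude bound $\rho'(x)\le\tfrac{\epsilon^2}{8}\le\epsilon$, which together with $|x|\le\epsilon$ already gives $B^2(x,\rho'(x))\subset B^2(2\epsilon)$. That $k\ge2$ is immediate: if $T'=\{0\}$ then Lemma \ref{cluster-selection}(ii) would place the modulus-$\epsilon$ point within $a_1=4\epsilon^3$ of $0$, contradicting $\epsilon<\tfrac12$.

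Next I would isolate the ``dichotomy refinement'': if some $z\in T$ with $z\neq x$ satisfies $R(z)=x$, then property (iii) of $(T,\rho)$ applied to $x,z$ and the bound $|x-z|\le a_k$ from Lemma \ref{cluster-selection}(ii) give $\rho(x)\le\tfrac{\epsilon^2}{4}a_k<\epsilon^{-1}a_k$, so $\rho'(R(z))=\tfrac14\epsilon^{-2}a_k$ — this is the assertion just before \eqref{cluster-diam}. From here \eqref{cluster-diam} and \eqref{rescaling-rad-v-cutoff-rad} follow by splitting into $z=R(z)$ (trivial for \eqref{cluster-diam}, and using $\rho'(R(z))\ge\tfrac14\epsilon^{-1}\rho(z)$ for \eqref{rescaling-rad-v-cutoff-rad}) and $z\neq R(z)$ (where $|z-R(z)|\le a_k=4\epsilon^2\rho'(R(z))$ and $\rho(z)\le\tfrac{\epsilon^2}{4}a_k\le\epsilon^{-1}a_k=4\epsilon\,\rho'(R(z))$). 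For \eqref{2-sep} I would run a three-way case analysis on $x\neq y$ in $T'$ according to which branch of the max is active, using $|x-y|>a_{k-1}$ from Lemma \ref{cluster-selection}(i): if both are in the equality branch, $\rho'(x)+\rho'(y)=2\epsilon a_{k-1}<2\epsilon|x-y|$; if both are in the $\rho$-branch, $\rho'(x)+\rho'(y)=\tfrac14\epsilon^{-1}(\rho(x)+\rho(y))\le\tfrac{\epsilon}{16}|x-y|$ by (iii); and the mixed case is bounded by $\epsilon|x-y|+\tfrac{\epsilon}{16}|x-y|$, in every case below $2\epsilon|x-y|$. Pairwise disjointness of the $B^2(x,\rho'(x))$ and containment in $B^2(2\epsilon)$ are then immediate.

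The final assertion — that $R^{-1}(x)=T\cap B^2(x,\rho'(x))$ and that for $w\in B^2(x,\rho'(x))$ the minimum $|w-T|$ is attained inside this set — is where the self-referential shape of \eqref{rho'-def} must be exploited, and I expect this to be the main obstacle. The inclusion $R^{-1}(x)\subset T\cap B^2(x,\rho'(x))$ is routine, since $|z-R(z)|\le a_k<\epsilon a_{k-1}\le\rho'(x)$ using $\epsilon<\tfrac12$. For the reverse inclusion and the minimum claim together, suppose $z^*\in T$ with $z^*\neq x$, $|z^*-x|\le 2\rho'(x)$, and $R(z^*)\neq x$ (for the set equality take any $z^*\in T\cap B^2(x,\rho'(x))\setminus\{x\}$; for the minimum take $z^*$ realizing $\min_{z\in T}|w-z|$, so that $|w-z^*|\le|w-x|\le\rho'(x)$ and hence $|z^*-x|\le 2\rho'(x)$). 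Then property (iii) of $(T,\rho)$ gives $\rho(x)\le\tfrac{\epsilon^2}{4}|z^*-x|\le\tfrac{\epsilon^2}{2}\rho'(x)$; if $\rho'(x)$ were in the $\rho$-branch this would read $\rho(x)\le\tfrac{\epsilon}{8}\rho(x)$, impossible for $\epsilon\le\tfrac18$ unless $\rho(x)=0$, in which case the equality branch is active anyway. So $\rho'(x)=\epsilon a_{k-1}$ and $|z^*-x|\le 2\epsilon a_{k-1}<\tfrac12 a_{k-1}$; but $R(z^*)=y\neq x$ forces $|z^*-x|\ge|x-y|-|y-z^*|>a_{k-1}-a_k>\tfrac12 a_{k-1}$ by Lemma \ref{cluster-selection}(i),(ii) and $4\epsilon^3\le\tfrac12$, a contradiction. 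Hence $R(z^*)=x$, which settles both statements. The only inputs are \eqref{const-1}, the defining properties of a standard type-$\epsilon$ bubble configuration, and Lemma \ref{cluster-selection}(i)--(ii).
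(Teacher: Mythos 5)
Your proposal is correct; all the estimates check out, and for everything up to and including \eqref{2-sep}, \eqref{cluster-diam} and \eqref{rescaling-rad-v-cutoff-rad} your argument is essentially the paper's (same dichotomy on which branch of the max in \eqref{rho'-def} is active, same use of Lemma \ref{cluster-selection}(i)--(ii) and of property (iii) of type-$\epsilon$ configurations; your three-way case split for \eqref{2-sep} versus the paper's two-way WLOG split is cosmetic). The one place you genuinely diverge is the final assertion, which you correctly identify as the crux. The paper fixes $w\in B^2(x,\rho'(x))$ and directly compares distances: for any $u\in T$ with $R(u)=v\ne x$ it shows $|w-u|-|w-x|\ge(1-4\epsilon)|x-v|-|u-v|>0$, using the triangle inequality, \eqref{2-sep} to absorb $2\rho'(x)$ into $4\epsilon|x-v|$, and the ratio bound $|u-v|/|x-v|<4\epsilon^3$. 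You instead prove a location statement: any $z^*\in T\setminus\{x\}$ with $|z^*-x|\le2\rho'(x)$ must satisfy $R(z^*)=x$, by first forcing $\rho'(x)$ into the equality branch via property (iii) (the self-referential step $\rho(x)\le\frac{\epsilon}{8}\rho(x)$) and then contradicting the $a_{k-1}$-separation of $T'$. Both are elementary and of comparable length; your version has the minor advantage of simultaneously delivering the set identity $R^{-1}(x)=T\cap B^2(x,\rho'(x))$ and the minimizer claim from one statement, whereas the paper's comparison argument gives slightly more (a strict inequality $|w-u|>|w-x|$ for every $u$ outside the cluster, not just for the minimizer).
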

	\begin{proof}
	    Since $(T,\rho)$ is standard, we have $\sup_{z\in T}|z| = \epsilon > 4\epsilon^3$ and thus, $k = |T'|\ge 2$. Now, fix $x\in T'$. Choose any $z\in T\setminus\{x\}$ and note that
	    \begin{align}\label{rho'-bound}
	        \rho'(x) = \textstyle\frac14\epsilon^{-1}\cdot\max\{\epsilon^{-1}(4\epsilon^3)^k,\rho(x)\}\le
	        \max\{\epsilon,\textstyle\frac{1}{16}\epsilon\cdot |x - z|\}\le\max\{\epsilon,\frac{\epsilon^2}{8}\} = \epsilon
	    \end{align}
	    and thus, $|x| + \rho'(x)\le 2\epsilon$. Next, to prove \eqref{2-sep}, consider $x\ne y\in T'$. If $\rho'(x) = \rho'(y) = \frac14\epsilon^{-2}(4\epsilon^3)^k$, then we have $\rho'(x) + \rho'(y) = \frac12\epsilon^{-2}(4\epsilon^3)^k = \frac12\cdot4\epsilon\cdot(4\epsilon^3)^{k-1}<2\epsilon|x - y|$. If not, assume (without loss of generality) that $4\epsilon\rho'(x) = \rho(x)\ge\rho(y)$ and note that
	    \begin{align}
	        \rho'(x) + \rho'(y) \le 2\rho'(x) =  \textstyle\frac12\epsilon^{-1}\rho(x)\le\frac12\epsilon^{-1}\cdot\frac{\epsilon^2}4|x  -y| < 2\epsilon|x - y|
	    \end{align}
	    as desired. Now, take any $z\in T$ such that $R(z) = x$. Using the definition of $R$ and $\rho'$, we immediately get
	    \begin{align}
	        |z - x|\le (4\epsilon^3)^k = 4\epsilon^2\cdot\textstyle\frac14\epsilon^{-2}(4\epsilon^3)^k\le4\epsilon^2\rho'(x)
	    \end{align}
	    and that $\rho(x)\le 4\epsilon\rho'(x)$. If $z\ne x$, then we must have
	    \begin{align}
	        \rho(z) + \rho(x) \le \textstyle\frac{\epsilon^2}4|x - z|\le\frac{\epsilon^2}4(4\epsilon^3)^k<\epsilon^{-1}(4\epsilon^3)^k\le4\epsilon\rho'(x)
	    \end{align}
	    which simultaneously proves equation \eqref{rescaling-rad-v-cutoff-rad} and the fact that $\rho'(x) = \frac14\epsilon^{-2}(4\epsilon^3)^k$. To prove the last assertion, take any $w\in B^2(x,\rho'(x))$. In view of \eqref{cluster-diam}, it is clear that $R^{-1}(x) = T\cap B^2(x,\rho'(x))$. Consider any $u\in T$ and put $v = R(u)\in T'$. If $v\ne x$, then we have
	    \begin{align}
	        |w - u| - |w - x| &\ge |x - v| - |u - v| - 2|w - x|\\
	        &\ge |x - v| - |u - v| - 2\rho'(x)\\
	        & \ge (1-4\epsilon)|x - v| - |u - v| 
	    \end{align}
	    where we used \eqref{2-sep} to remove the quantity $4\epsilon|x - v| - 2\rho'(x)\ge 0$. Since $\frac{|u - v|}{|x - v|}< 4\epsilon^3 < 1 - 4\epsilon$, we get $|w - u| > |w - x|$. Thus, the minimum $|w - T|$ is attained on $R^{-1}(x)$.
	\end{proof}
	\begin{remark}\label{reduction}
	    Continuing in the situation of Lemma \ref{reduce}, we can decompose $T' = T'_\text{ext}\sqcup T'_\text{int}$, according to whether or not $x\in T'$ satisfies $R^{-1}(x) = \{x\}$. Now, consider any $x\in T'_\text{int}$. Then, we define
	    \begin{align}\label{gluing-parameter}
	        0<\gamma(x) = \sup_{z\in R^{-1}(x)}\frac{|z - x|}{\epsilon\rho'(x)} \le 4\epsilon
	    \end{align}
	    and the standard bubble configuration $(T_x,\rho_x)$ of type $\epsilon$ as follows. We set $T_x$ to be the inverse image of $R^{-1}(x) = T\cap B^2(x,\rho'(x))$ under the map $\Phi_x(w) = x + \gamma(x)\rho'(x)w$ and define $\rho_x:T_x\to(0,\infty)$ by
	    \begin{align}
	        \rho_x(w) = \frac{\rho(\Phi_x(w))}{\gamma(x)\rho'(x)}.
	    \end{align}
	    We say that $(T_x,\rho_x)$ is got by \emph{\textbf{reduction}} of $(T,\rho)$ at $x\in T'_\text{int}\subset T'$.
	\end{remark}
	\noindent Next, we show how to associate a stable rooted tree $\cT$ (and a point $p\in\cM_\cT$) to a standard bubble configuration of type $\epsilon$. This will be applied to the bubble configuration provided by Proposition \ref{c-mod-b} to prove Theorem \ref{main-covering-theorem}.
	\begin{definition}\label{tree-assoc-bubble-config}
	    There exists an assignment which maps standard bubble configurations $(T,\rho)$ of type $\epsilon$ to pairs $(\cT,p)$, with $\cT=(V,E,\partial)$ a stable rooted tree and $p\in\bA_\cT$ (see Definition \ref{curve-assoc-tree}) a point, such that the following property holds for any $(T,\rho)$.
	    \begin{enumerate}[\normalfont(A)]
	        \item There exists a subset $T'\subset T$ and retraction $R:T\to T'$ satisfying the conclusion of Lemma \ref{cluster-selection}, applied to the finite metric space $T$ with the sequence $a_i = (4\epsilon^3)^i$ for $i\ge 0$ and $s = 0$, such that $|T'|\ge 2$ and the following assertions are true. Take $\rho'$ as defined in Lemma \ref{reduce}.
	        \begin{enumerate}[\normalfont(a)]
	            \item If $T = T'$, then $\cT$ has a single vertex $v(T,\rho)$, a root edge $e(T,\rho)$ and the remaining edges $\{e_x\}_{x\in T}$ are in bijection with $T$. The point $p\in\cM_\cT$ is defined by declaring $z_{v(T,\rho),e_x}(p) = x$ and $\rho_{v(T,\rho),e_x}(p) = \rho'(x)$ for each $x\in T$.
	            \item If $T\ne T'$, then decompose $T' = \normalfont T'_\text{ext}\sqcup T'_\text{int}$ as in Remark \ref{reduction}. For each $x\in \normalfont T'_\text{int}$, let $(\cT_x,p_x)$ be the pair associated to $(T_x,\rho_x)$. Let $V_x,E_x,\partial_x$ and $e(T_x,\rho_x)$ be the vertex set, edge set, boundary map and root edge of $\cT_x$. Then, $\cT$ is given by
	            \begin{align}
	                V  &= \{v(T,\rho)\}\sqcup\textstyle\bigsqcup_{x\in \normalfont T'_\text{int}} V_x\\
	                E &= \{e(T,\rho)\}\sqcup\textstyle\bigsqcup_{x\in \normalfont T'_\text{int}} E_x\sqcup\{e_x\}_{x\in \normalfont T'_\text{ext}}
	            \end{align}
	            with root edge $e(T,\rho)$, and boundary map $\partial$ defined by
	            setting $\partial(e(T,\rho)) = \partial(e_x) = \{v(T,\rho)\}$ for each $x\in\normalfont T'_\text{ext}$, $\partial(e(T_x,\rho_x)) = \partial_x(e(T_x,\rho_x))\sqcup\{v(T,\rho)\}$ and $\partial = \partial_x$ on $E_x\setminus\{e(T_x,\rho_x)\}$ for each $x\in\normalfont T_\text{int}$. The point $p\in\bA_\cT$, following the notation of Remark \ref{reduction}, is defined by setting
	            \begin{align}
	                \gamma_{e(T_x,\rho_x)}(p) &= \gamma(x)\\ z_{v(T,\rho),e(T_x,\rho_x)}(p) &= x\\ \rho_{v(T,\rho),e(T_x,\rho_x)}(p) &= \rho'(x)\\
	                z_{v(T,\rho),e_y}(p) &= y\\ \rho_{v(T,\rho),e_y}(p) &= \rho'(y)
	            \end{align}
	            for each $x\in\normalfont T'_\text{int}$, $y\in\normalfont T'_\text{ext}$ with the other coordinates $z_{v,e},\rho_{v,e}$ and $\gamma_e$ of $p$ (for $v\in V_x$, $e\in E_x$ with $x\in T'_\text{int}$) given by the corresponding coordinates of $p_x$.
	        \end{enumerate}
	    \end{enumerate}
	    Fix an assignment satisfying (A) once and for all. We will refer to $(\cT,p)$ as the tree and moduli point \emph{\textbf{associated}} to the standard bubble configuration $(T,\rho)$ of type $\epsilon$.
	\end{definition}
	\begin{lemma}\label{tree-assoc-bubble-config-prop}
	    Use the notation of Definition \ref{tree-assoc-bubble-config} and let $(T,\rho)$ be a standard bubble configuration of type $\epsilon$ and let $(\cT,p)$ be the associated tree and moduli point. Define $\theta = \epsilon$, $\tau = 4\epsilon$ and $\bm\alpha = \{\alpha_v\}_{v\in V}$ by $\alpha_v \equiv (4\epsilon^3)^{\deg(v)}$. Then, we have the following properties.
	    \begin{enumerate}[\normalfont(i)]
	        \item $p\in\cM_\cT(\theta,\tau,\bm\alpha)$, in the sense of Definition \ref{M-compact}.
	        \item For each half-edge $e\ne e(T,\rho)$ of $\cT$, let $v_e\in\partial(e)$ be its unique endpoint. Then, the map $e\mapsto z_{v_e,e}^{v(T,\rho)}(p)$, in the notation of Definition \ref{curve-assoc-tree}, defines a bijection $\normalfont E_\text{ext}\setminus\{e(T,\rho)\}\to T$.
	        \item All the $\gamma$-coordinates of $p$ are $\ne 0$. In particular, the projection $\pi_{v(T,\rho)}:\cC_{p,\cT}\to\bP^1$ is an isomorphism.
	    \end{enumerate}
	\end{lemma}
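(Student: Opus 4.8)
The plan is to induct on $n:=|T|$, following the recursive construction of $(\cT,p)$ in Definition \ref{tree-assoc-bubble-config}. The base case will be the situation $T=T'$ (case (a) there), and the inductive step will be $T\ne T'$ (case (b)); in the latter, each piece $(\cT_x,p_x)$ attached at a vertex $x\in T'_\text{int}$ comes from a reduction $(T_x,\rho_x)$ which is a standard bubble configuration of type $\epsilon$ with $|T_x|=|R^{-1}(x)|<n$ (since $R$ is surjective onto $T'$ and $|T'|\ge 2$), so the inductive hypothesis applies to it. At the start I would record the elementary consequences of \eqref{const-1}: since $\epsilon\le 1/8$ we have $\tau=4\epsilon\le\tfrac12$, $\tfrac14\epsilon^{-2}\ge 1$, and $\alpha_v=(4\epsilon^3)^{\deg v}\le 4\epsilon^3\le\epsilon=\theta$ (each vertex of $\cT$ is of the form $v(T_0,\rho_0)$ and so has $\deg\ge 2$); thus $\theta,\tau,\bm\alpha$ are admissible in the sense of Definition \ref{M-compact}.

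For part (i) I would verify the four families of inequalities \eqref{M-def1}--\eqref{M-def4} defining $\cM_\cT(\theta,\tau,\bm\alpha)$ by separating the edges incident to the root vertex $v(T,\rho)$ from those interior to an attached piece $\cT_x$. At $v(T,\rho)$ the relevant coordinates are $z$-values lying in $T'\subset B^2(\epsilon)$ and $\rho$-values equal to $\rho'(z)$ for $z\in T'$, and $\deg v(T,\rho)=|T'|=:k$; then \eqref{M-def1} is Definition \ref{bubble-config-def}(i), \eqref{M-def2} follows from $\rho'(z)\le\epsilon$ (the bound in \eqref{rho'-bound}) together with $\rho'(z)\ge\tfrac14\epsilon^{-2}(4\epsilon^3)^k\ge(4\epsilon^3)^k=\alpha_{v(T,\rho)}$, \eqref{M-def3} follows from \eqref{2-sep}, and \eqref{M-def4} for the new full edge $e(T_x,\rho_x)$ is $\gamma(x)\le 4\epsilon=\tau$ from \eqref{gluing-parameter}. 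The inequalities for edges interior to some $\cT_x$ are supplied by the inductive hypothesis applied to $p_x$, once one observes that attaching $\cT_x$ to $\cT$ alters only the parent edge of the root vertex of $\cT_x$ and leaves the set of children edges of every $v\in V_x$ unchanged, so that $\deg_\cT v=\deg_{\cT_x}v$ and the corresponding entries of $\bm\alpha$ coincide. The base case $T=T'$ is the same computation with \eqref{M-def4} vacuous. Combined with Lemma \ref{M-compact-incl} this also yields $p\in\cM_\cT$.

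For part (ii) I would first note that attaching $\cT_x$ turns its old root edge $e(T_x,\rho_x)$ into an internal edge of $\cT$ while keeping its other external edges external, so that $E_\text{ext}\setminus\{e(T,\rho)\}$ is the disjoint union of $\{e_y\}_{y\in T'_\text{ext}}$ and the sets $(E_x)_\text{ext}\setminus\{e(T_x,\rho_x)\}$ over $x\in T'_\text{int}$. The crux is the transformation law for the marked-point coordinates under traversal of the new full edge: for an external edge $e$ interior to $\cT_x$, the positive path from $v(T,\rho)$ to the endpoint $v_e$ of $e$ runs first through $e(T_x,\rho_x)$ and then along the corresponding path inside $\cT_x$, so \eqref{marked-point-across-components} gives
\begin{align}
z^{v(T,\rho)}_{v_e,e}(p)=x+\gamma(x)\,\rho'(x)\cdot z^{v(T_x,\rho_x)}_{v_e,e}(p_x)=\Phi_x\bigl(z^{v(T_x,\rho_x)}_{v_e,e}(p_x)\bigr),
\end{align}
with $\Phi_x$ as in Remark \ref{reduction}. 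By the inductive hypothesis the inner map is a bijection $(E_x)_\text{ext}\setminus\{e(T_x,\rho_x)\}\to T_x$, hence the composite is a bijection onto $\Phi_x(T_x)=R^{-1}(x)$; for $y\in T'_\text{ext}$ the coordinate is just $y$ and $R^{-1}(y)=\{y\}$. Since $R$ is surjective, $T=\bigsqcup_{z\in T'}R^{-1}(z)$, and assembling these bijections over $T'_\text{int}$ and $T'_\text{ext}$ proves the claim.

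Part (iii) is then immediate: the $\gamma$-coordinates of $p$ are the numbers $\gamma(x)>0$ for $x\in T'_\text{int}$ together with the $\gamma$-coordinates of the $p_x$, all nonzero by the inductive hypothesis (and there are none in the base case); the final assertion follows from Lemma \ref{prop-family-curve}(i), applied to the root vertex, once we know $p\in\cM_\cT$, which is part (i) plus Lemma \ref{M-compact-incl}. I expect the main obstacle to be bookkeeping rather than analysis: carefully tracking which edges become internal upon gluing, and checking that the exponent $\deg v$ occurring in $\alpha_v$ stays synchronized with the exponent $k=|T'|$ appearing in the lower bound for $\rho'$ in Lemma \ref{reduce}. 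All of the substantive estimates (\eqref{2-sep}, $\rho'\le\epsilon$, and $\gamma(x)\le 4\epsilon$) are already contained in Lemma \ref{reduce} and Remark \ref{reduction}, so no fresh estimates are required.
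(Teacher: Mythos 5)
Your proof is correct and follows essentially the same route as the paper: induction on $|T|$ through the recursive construction of Definition \ref{tree-assoc-bubble-config}, with \eqref{M-def1}--\eqref{M-def4} checked at the root vertex via Lemma \ref{reduce} and \eqref{gluing-parameter}, part (ii) via the transformation law $z^{v(T,\rho)}_{v_e,e}(p)=\Phi_x(z^{v(T_x,\rho_x)}_{v_e,e}(p_x))$, and part (iii) via Lemma \ref{prop-family-curve}(i). One harmless slip: the root vertex has degree $|T'|+1$ (its parent edge counts), so $\alpha_{v(T,\rho)}=(4\epsilon^3)^{|T'|+1}$, which is smaller than your $(4\epsilon^3)^{|T'|}$ and hence makes the lower bound in \eqref{M-def2} only easier to satisfy.
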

	\begin{proof}
	    The proof is by induction on $|T|$, using property (A) from Definition \ref{tree-assoc-bubble-config}. Start with $(T,\rho)$ and find a retraction $R$ of $T$ onto a subset $T'\subset T$ such that (a), (b) from Definition \ref{tree-assoc-bubble-config} hold. Assume that (i), (ii) hold for all standard bubble configurations of type $\epsilon$ which have strictly fewer bubble points.
	    \begin{enumerate}[(i)]
	        \item If $T\ne T'$, then property (i) of Definition \ref{bubble-config-def}, the definition of $\rho'(\cdot)$ in Lemma \ref{reduce} and the estimates \eqref{2-sep} and \eqref{gluing-parameter} allow us to verify \eqref{M-def1}--\eqref{M-def4}, showing inductively that $p\in\cM_\cT(\theta,\tau,\bm\alpha)$. The case when $T = T'$ is even simpler since \eqref{M-def4} is vacuous in this case and \eqref{M-def1}--\eqref{M-def3} are verified as before.
	        \item When $T = T'$, the assertion is clear. When $T\ne T'$, the assertion follows using
	        \begin{align}
	            R^{-1}(T'_\text{int}) &= \textstyle\bigsqcup_{x\in T'_\text{int}} \Phi_x(T_x)\\
	            z^{v(T,\rho)}_{v_e,e}(p) &= \Phi_x(z^{v(T_x,\rho_x)}_{v_e,e}(p_x))
	        \end{align}
	        for each $x\in T'_\text{int}$ and the fact that (ii) holds for each $(T_x,\rho_x)$. Here, $\Phi_x$ is as in Remark \ref{reduction}.
	        \item The fact that each $\gamma$-coordinate of $p$ is nonzero is obvious from induction and \eqref{gluing-parameter}. Thus, for any vertex $v$ of $\cT$, the projection $\pi_v:\cC_{p,\cT}\to\bP^1$ is an isomorphism by Lemma \ref{prop-family-curve}(i). In particular, we can apply this to $v = v(T,\rho)$.
	    \end{enumerate}
	\end{proof}
	\subsection{Choice of gauge and gradient bound}
	\begin{definition}\label{final-gauge}
	    Let $A\ge 0$ be a real number, $\ell\ge 0$ be an integer and let $(\Sigma,x_1,\ldots,x_\ell,f)$ be a stable $J$-holomorphic map with $\Sigma\simeq\bP^1$ and $E(f)\le A$. Let $\Phi:\bP^1\to\Sigma$ be the isomorphism and $(S,r)$ be the standard bubble configuration of type $\epsilon$ given by Proposition \ref{c-mod-b}. Let $(\cT,p)$ be the tree and moduli point associated to $(S,r)$ by Definition \ref{tree-assoc-bubble-config}. Define the map $\Psi:\cC_{p,\cT}\to\Sigma$ by
	    \begin{align}
	        \Psi = \Phi\circ\pi_{v(T,\rho)}
	    \end{align}
	    which is an isomorphism using Lemma \ref{tree-assoc-bubble-config-prop}. If $\ell = 0$, we define $F\subset E_\text{ext}$ be the empty set. If $\ell\ge 1$, we define $F\subset E_\text{ext}$ to consist of the root edge $e(T,\rho)$ and the edges $e\in E_\text{ext}\setminus\{e(T,\rho)\}$ for which $r(z^{v(T,\rho)}_{v_e,e}(p)) = 0$, where we are using the notation of Lemma \ref{tree-assoc-bubble-config-prop}(ii). It is easy to see that the marked points $\{\sigma_\cT(e,p)\}_{e\in F}$, introduced in Lemma \ref{prop-family-curve}(v), are in bijection with $\{x_1,\ldots,x_\ell\}$. Finally, define $f_\Psi:\cC_{p,\cT}\to X$ as $f_\Psi = f\circ\Psi$.
	\end{definition}
	\begin{proposition}\label{final-grad-bound}
	    In the situation of Definition \ref{final-gauge}, if $K\subset X$ is a compact set containing $f(\Sigma)$, then
	    \begin{align}
	        (p,f_\Psi:\cC_{p,\cT}\to X) \in
	        \cM_{\cT,F}(X,J;\theta,\tau,\bm\alpha;K,\eta,\bm\Lambda)
	    \end{align}
	    where we have
	    \begin{align}
	        \theta &= \epsilon \\
	        \tau &= 4\epsilon \\
	        \alpha_v &= (4\epsilon^3)^{\deg(v)}\\
	        \eta &= \textstyle\frac{1}{3\sqrt C}\cdot\textstyle\frac{\lambda}{\lambda_0}\\
	        \Lambda_v &= \textstyle\frac{9\pi\sqrt C}{\epsilon^2}\cdot\alpha_v^{-1}\\
	        \Lambda_e &= \textstyle\frac{M'}{\epsilon^2}
	    \end{align}
	    for vertices $v\in V$ and edges $e\in E$.
	    Here, $C$ is the constant from Lemma \ref{mvi}, while $\epsilon,\lambda$ are as in Remark \ref{cov-const} and $M' = M'(X_2)\ge 1$ is a constant depending on the bounds on the geometry of $X$. Moreover, \begin{align}
	        |\normalfont E_\text{ext}|\le \lfloor A/\lambda^2\rfloor + \ell + 1.
	    \end{align}
	\end{proposition}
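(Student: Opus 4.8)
The plan is to verify, one at a time, the four membership conditions \eqref{compact-part-of-moduli}--\eqref{terminal-energy-lb} for the pair $(p,f_\Psi)$ coming from Definition \ref{final-gauge}, together with the bound on $|E_{\mathrm{ext}}|$. Three of these are bookkeeping. Condition \eqref{compact-part-of-moduli}, that $p\in\cM_\cT(\theta,\tau,\bm\alpha)$, is exactly Lemma \ref{tree-assoc-bubble-config-prop}(i) applied to the standard bubble configuration $(S,r)$ of Proposition \ref{c-mod-b}. Condition \eqref{compact-image} is immediate, since $\Psi$ is an isomorphism onto $\Sigma$ (Lemma \ref{tree-assoc-bubble-config-prop}(iii)) and $f(\Sigma)\subset K$. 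For \eqref{terminal-energy-lb}: by Definition \ref{final-gauge} an edge $e\in E_{\mathrm{ext}}\setminus F$ corresponds to a bubble point $x\in S\setminus S_0$, the end region $R_e(p)$ contains the full preimage of $B^2(x,r(x))$ under the relevant coordinate projection, and conformal invariance of energy together with \eqref{energy-cluster} gives $E(f_\Psi,R_e(p))\ge\lambda^2\ge(\eta\lambda_0)^2$, using $\eta\lambda_0=\tfrac1{3\sqrt C}\lambda\le\lambda$. Finally $|E_{\mathrm{ext}}|=|S|+1$ by Lemma \ref{tree-assoc-bubble-config-prop}(ii), and $|S|\le|S_0|+\lfloor A/\lambda^2\rfloor\le\max\{0,\ell-1\}+\lfloor A/\lambda^2\rfloor$ by Proposition \ref{c-mod-b}(i)--(ii), which is the stated bound.

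The substance is the Lipschitz bound \eqref{lip-bound}, which I would prove by induction on the number of vertices of $\cT$, using the recursive structure of Definition \ref{tree-assoc-bubble-config}. The engine of the induction is that the reduction $(T_x,\rho_x)$ at $x\in T'_{\mathrm{int}}$ is realized by the rescaled $J$-holomorphic map $g_x(w)=f_\Phi(x+\gamma(x)\rho'(x)w)$, and that — by the last assertion of Lemma \ref{reduce}, which places $|\,\cdot-S|$ inside the cluster $R^{-1}(x)$ over $B^2(x,\rho'(x))$ — the gradient estimate \eqref{c-mod-b-grad-bound} for $f_\Phi$ transfers verbatim to $g_x$ with $T_x$ in place of $S$. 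So the inductive hypothesis covers every thick/neck/end region inside each subtree $\cT_x$, and what remains is the top level. On the top thick region $R_{v(S,r)}(p)$, Lemma \ref{reduce} shows the points of $S$ cluster within $4\epsilon^2\rho'(x)\le\tfrac12\rho'(x)$ of the disc centres, hence $|\,\cdot-S|\ge\tfrac12\alpha_{v(S,r)}$ in the $\pi_{v(S,r)}$-coordinate; feeding this into \eqref{c-mod-b-grad-bound} and converting to the thick (round) metric — using that round distance dominates Euclidean distance on $B^2(1)$, that the removed discs are $\tfrac1{2\epsilon}$-separated by \eqref{2-sep} so detour paths in the region are only boundedly longer, and \eqref{const-2} to absorb $\lambda$ into $\lambda_0$ — gives $\mathrm{Lip}\le\Lambda_{v(S,r)}\lambda_0$. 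On a $\cT$-neck $R_{e(T_x,\rho_x)}(p)$, identify it with an annulus via Lemma \ref{compact-curve-prop}(iv) and note (from Lemma \ref{reduce}, \eqref{2-sep}, \eqref{gluing-parameter}) that $S$ meets only the deleted core, so $|\,\cdot-S|\gtrsim\rho^*|z_u|$ in the plumbing coordinate, producing a cylindrical gradient bound $|df_\Psi|\lesssim\tfrac{\sqrt C\lambda}{\epsilon^2|z_u|}$; Lemma \ref{annulus-dist}, which supplies paths of length $\le 16\pi\,\dist_e$ in $\bP_e$, then converts this into $\mathrm{Lip}\le\Lambda_e\lambda_0$ with the geometric constant $M'$ absorbing the accumulated factors.

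The delicate case, and the step I expect to be the main obstacle, is the ends $R_{e_y}(p)$ for $y\in T'_{\mathrm{ext}}$: the bubble point $y$ sits at the centre of $R_{e_y}(p)$ and carries a genuine energy concentration, so \eqref{c-mod-b-grad-bound} alone is vacuous near it. The resolution is to combine \eqref{c-mod-b-grad-bound} with the mean value inequality: since $E(f_\Phi,B^2(y,r(y)))=\lambda^2\le q$ by \eqref{energy-cluster} and \eqref{const-3}, Lemma \ref{mvi} applied to the balls $B^2(z,r(y)-|z-y|)\subset B^2(y,r(y))$ gives $|df_\Phi(z)|\le\tfrac{\sqrt C\lambda}{r(y)-|z-y|}$, and the minimum of this with \eqref{c-mod-b-grad-bound} (which controls $|df_\Phi|$ once $|z-y|\ge\tfrac12 r(y)$) produces a uniform bound $|df_\Phi|\le\tfrac{16\sqrt C\lambda}{\epsilon^2 r(y)}$ on all of $B^2(y,r(y))$, and then on all of $B^2(y,\rho'(y))$ via \eqref{c-mod-b-grad-bound} again on the outer annulus. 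After rescaling by $\rho'(y)$ and passing to the round metric, the Lipschitz constant of $f_\Psi|_{R_{e_y}(p)}$ is governed by $\tfrac{\sqrt C\lambda}{\epsilon^2}\cdot\tfrac{\rho'(y)}{r(y)}$, and the crux is to check — against the definition of $\rho'$ in Lemma \ref{reduce} and the constraints \eqref{const-1}--\eqref{const-3} tying $\lambda$ to $\lambda_0$ — that this is bounded by $\tfrac{M'}{\epsilon^2}\lambda_0$ for a suitable geometric $M'=M'(X_2)$; fixing $M'$ so that it also dominates the neck constant then completes the proof.
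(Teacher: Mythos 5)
Your overall architecture --- verifying \eqref{compact-part-of-moduli}--\eqref{terminal-energy-lb} region by region and using the rescaling-invariance of \eqref{c-mod-b-grad-bound} to transfer the gradient bound into each thick/neck/end chart --- is the paper's architecture (the paper does not phrase it as an induction on $\cT$, but the ``re-scaled estimate \eqref{c-mod-b-grad-bound}'' it invokes in each case is exactly your reduction step), and your treatment of the thick regions matches the paper's. However, your resolution of the case you yourself flag as the crux, the end regions, would fail. The ratio $\rho'(y)/r(y)$ is \emph{not} bounded by anything extractable from the definition of $\rho'$ in Lemma \ref{reduce} or from \eqref{const-1}--\eqref{const-3}: when $r(y)<\epsilon^{-1}(4\epsilon^3)^k$ one has $\rho'(y)=\frac14\epsilon^{-2}(4\epsilon^3)^k$, which depends only on the combinatorics, while $r(y)$ can be arbitrarily small (a deep, isolated bubble), so $\rho'(y)/r(y)\to\infty$. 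The error is applying the mean value inequality at the concentration scale $r(y)$. The paper instead applies Lemma \ref{mvi} to balls of radius $\tfrac12$ in the \emph{rescaled} unit-disc model $R'_e(p)=B^2(1)$ of the end region, using that the total energy there is $\approx\lambda^2\le q$; since the MVI controls the gradient at the centre of a ball by the total energy on that ball divided by its radius --- irrespective of where inside the ball the energy concentrates --- this gives $|df_e|\le 2\sqrt C\lambda$ on $B^2(\tfrac12)$ with no appearance of $r(y)$ at all. (A posteriori this shows $\rho'(y)/r(y)$ is bounded by a geometric constant, but that bound is an output of the macroscopic MVI, not something you can check against the combinatorial data.)

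Two further gaps. For the necks, a bound of the form $|df_\Psi|\lesssim\frac{\sqrt C\lambda}{\epsilon^2|z_u|}$, i.e.\ a \emph{uniform} bound on the cylindrical gradient $|d(g_e\circ\nu)|$, is not sufficient: by Lemma \ref{annulus-cyl-flat} the induced metric on $\{z_uz'_v=\gamma_e\}\subset\bP^1\times\bP^1$ degenerates like $e^{-R}(e^{2s}+e^{-2s})^{1/2}$ near the waist, so a uniform cylindrical bound yields a Lipschitz constant of order $|\gamma_e|^{-1/2}$ with respect to $\dist_e$, which is unbounded. You must upgrade to the exponential decay estimate \eqref{exp-decay-estimate} of Lemma \ref{exp-decay}, which gives $|d(g_e\circ\nu)(s,t)|\lesssim e^{-R}(e^s+e^{-s})$ and exactly cancels the metric degeneration; this is the paper's \eqref{cyl-neck-grad-bound}, and Lemma \ref{annulus-dist} alone does not substitute for it. Finally, your identification of $E_{\text{ext}}\setminus F$ with $S\setminus S_0$ misses the root edge when $\ell=0$: then $F=\varnothing$ and $e(S,r)\in E_{\text{ext}}\setminus F$ corresponds to no bubble point, and \eqref{terminal-energy-lb} for it requires the lower bound \eqref{e-l-b} from Proposition \ref{c-mod-b}(ii) --- which is precisely why $\eta$ carries the factor $\frac1{3\sqrt C}$ rather than the cruder $\eta\lambda_0\le\lambda$ you invoke.
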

	\begin{proof}
	    From Definition \ref{bubble-config-def}(i), \eqref{rho'-def}, \eqref{2-sep} and \eqref{gluing-parameter}, we immediately see that we can take $\theta = \epsilon$, $\tau = 4\epsilon$ and $\alpha_v = (4\epsilon^3)^{\deg(v)}$ so that \eqref{compact-part-of-moduli} holds. Condition \eqref{compact-image} is obvious from $f(\Sigma)\subset K$. Using Proposition \ref{c-mod-b}(ii), we find that if we take $\eta$ such that $(\eta\lambda_0)^2 = \frac19C^{-1}\lambda^2$, then \eqref{terminal-energy-lb} also holds. We now only need to check that \eqref{lip-bound} holds with the stated choice of $\bm\Lambda$. The main observation is that a bound of the form \eqref{c-mod-b-grad-bound} is invariant under re-scalings and translations of $\bC$ (provided we transform $S$ also accordingly). We will now check \eqref{lip-bound} for $f_\Psi$ by considering the cases of thick/end/neck regions separately.
	    \begin{enumerate}[(a)]
	        \item Let $v\in V$ and consider the neck region $R_v(p)$. We can identify $R_v(p)$ via $\pi_{v,e_v}$ (where $e_v$ is the unique edge with $e_v^+ = v$) with the set
	        \begin{align}
	            R_v'(p) = B^2(1)\setminus\textstyle\bigsqcup_{e\in v^+}\text{int }B^2(z_{v,e},|\rho_{v,e}|) \subset \bC \subset \bP^1
	        \end{align}
	        Write $f_v$ for the restriction of $f_\Psi\circ\pi_{v,e_v}^{-1}$ to this set $R_v'(p)$. We first find a gradient estimate for this map in the metric $\omega_\bC$. In view of Lemma \ref{reduce} and the (re-scaled) estimate \eqref{c-mod-b-grad-bound}, we get
	        \begin{align}
	            |df_v(w)|_{\omega_{\bP^1}}\le |df_v(w)|_{\omega_\bC}\le\frac{8\sqrt C}{\epsilon^2}\cdot\sup_{e\in v^+}\frac{\lambda(1-4\epsilon^2)^{-1}}{|w - z_{v,e}|}\le \frac{9\sqrt C}{\epsilon^2}\cdot\frac{\lambda}{(4\epsilon^3)^{\deg(v)}}
	        \end{align}
	        for all $w\in R'_v(p)$. Now, given any two points $w_1,w_2\in R'_v(p)$, we can join them by a path $\gamma$ in $R'_v(p)$ of $\omega_\bC$-length $\le\pi|w_1 - w_2|$. Indeed, take the straight line path from $w_1$ to $w_2$ and replace each segment which goes through the discs $B^2(z_{v,e},|\rho_{v,e}|)$ by the (smaller) arc along the boundary of this disc. Thus,
	        \begin{align}
	            \dist(f_v(w_1),f_v(w_2))\le\frac{9\sqrt C}{\epsilon^2}\lambda\alpha_v^{-1}\cdot\pi|w_1 - w_2| \le \Lambda_v\cdot\dist_{\omega_{\bP^1}}(w_1,w_2)
	        \end{align}
	        and this proves the desired Lipschitz bound on $R_v'(p)$.
	        \item Let $e\in E_\text{ext}$ and consider the end region $R_e(p)$. First consider the case when $e$ is not the root edge $e(S,r)$. Let $u = e^-$. Using $\pi_{u,e}$, we can identify $R_e(p)$ with $R'_e(p) = B^2(1)\subset\bC\subset\bP^1$. Write $f_e$ for the restriction of $f_\Psi\circ\pi_{u,e}^{-1}$ to this set $R_e'(p)$. The (re-scaled) estimate \eqref{c-mod-b-grad-bound} now tells us that we have
	        \begin{align}\label{terminal-grad}
	            |df_e(w)|_{\omega_\bC}\le\frac{8\sqrt C}{\epsilon^2}\cdot\frac{\lambda}{|w|}
	        \end{align}
	        for all $w\in R_e'(p)$. This estimate blows up as $w\to 0$. But, since we know that $f_e$ extends smoothly to $w = 0$ and that $E(f_e,R_e'(p)) = \lambda^2\le q$, we get
	        \begin{align}
	            |df_e(w)|_{\omega_\bC}\le 2\lambda\sqrt C
	        \end{align}
	        for each $w\in B^2(\frac12)$ by applying Lemma \ref{mvi} to the ball $B^2(w,\frac12)$. Thus, we get $|df_e(w)|_{\omega_{\bP^1}}\le\frac{16\sqrt C}{\epsilon^2}\cdot\lambda$ for all $w\in R_e'(p)$. Now, if $e = e(S,r)$, then let $v = v(S,r)$ be the root vertex, i.e., $v = e^+$. We can use $\pi_{v,e}$ followed by the map $[z:w]\mapsto[w:z]$ to identify $R_e(p)$ with $R_e'(p) = B^2(1)\subset\bC\subset\bP^1$ and let $f_e$ be the map on $R_e'(p)$ defined by $f_\Psi$. It is again immediate to see that \eqref{terminal-grad} again holds. Using cylindrical coordinates on $B^2(1)\setminus\{0\}$ and \eqref{const-2} to see that Lemma \ref{exp-decay} is applicable, we conclude from the identity \eqref{action-diff} and \eqref{weak-iso-peri} that we have
	        \begin{align}
	            E(f_e,R_e'(p))\le \hat c\cdot\left(\frac{8\sqrt C}{\epsilon^2}\cdot2\pi\lambda\right)^2
	        \end{align}
	        where $\hat c = \hat c(X_2)$ is the maximum of the product of the constant $c$ from Lemma \ref{exp-decay} with \begin{align}
	            \sup_{V\ne 0}\frac{g(V,V)}{\omega(V,JV)}
	        \end{align}
	        and 1. Now, arguing as before (using Lemma \ref{mvi}) we get $|df_e(w)|_{\omega_{\bP^1}}\le\frac{32\pi C\sqrt{\hat c}}{\epsilon^2}\cdot\lambda$ for all $w\in R_e'(p)$.
	        \item Let $e\in E_\text{int}$ and consider the neck region $R_e(p)$. Write $u = e^-$ and $v = e^+$. Using $\pi_{u,e}\times\pi_{v,e}$, we can identify $R_e(p)$ with the subset $R_e'(p)\subset\bP^1\times\bP^1$ consisting of pairs $([z:1],[1:w])$ such that $z,w\in B^2(1)$ and $zw = \gamma_e$. Let $f_e$ be the map on $R_e'(p)$ induced by $f_\Psi$.\\\\ 
	        Using $\pi_{u,e_u}$, where $e_u$ is the unique edge with $e_u^+ = u$, we can identify $R_e(p)$ with the region
	        \begin{align}
	           R''_e(p) =  B^2(z_{u,e},|\rho_{u,e}|)\setminus\text{int }B^2(z_{u,e},|\gamma_e\rho_{u,e}|)\subset\bC\subset\bP^1.
	        \end{align}
	        The isomorphism $\varphi_e:R'_e(p)\to R''_e(p)$, which induces the identity on $R_e(p)$, is given by
	        \begin{align}
	            ([z:1],[1:w])\mapsto z_{u,e} + \rho_{u,e}z.
	        \end{align}
	        Note that, in view of \eqref{c-mod-b-grad-bound}, the map $g_e = f_e\circ\varphi_e^{-1}$ satisfies
	        \begin{align}
	            |dg_e(w)|_{\omega_\bC}\le\frac{8\sqrt C}{\epsilon^2}\cdot\frac{\lambda(1-\epsilon)^{-1}}{|w - z_{u,e}|}
	        \end{align}
	        for all $w\in R_e''(p)$. Using cylindrical coordinates
	        \begin{align}
	            \nu:[-R,R]\times S^1\to R_e''(p)
	        \end{align}
	        with $2R = -\log|\gamma_e|$ on this annulus, and \eqref{const-2}, we find that Lemma \ref{exp-decay} is applicable to the map $g_e\circ\nu$. Equations \eqref{action-diff} and \eqref{weak-iso-peri} then give $\sqrt{E(g_e\circ\nu)}\le\frac{16\pi\lambda\sqrt{2\hat cC}}{\epsilon^2(1-\epsilon)}$, where $\hat c$ is an case (b) above. Applying \eqref{exp-decay-estimate}, we easily deduce that we have the estimate
	        \begin{align}\label{cyl-neck-grad-bound}
	            |d(g_e\circ\nu)(s,t)|\le \frac{8\lambda\sqrt C }{\epsilon^2(1-\epsilon)}(e + 2\pi K_1\sqrt{2\hat c})e^{-R}(e^s+e^{-s})
	        \end{align}
	        for \emph{all} $(s,t)\in[-R,R]\times S^1$, where we are using the standard metric $ds^2 + dt^2$ on the domain. By Lemmas \ref{annulus-cyl-flat}, \ref{disc-round-to-flat} and \ref{annulus-dist}, we can now convert \eqref{cyl-neck-grad-bound} into a bound for the Lipschitz constant of $f_e$ on $R_e'(p)$ with respect to $\dist_e$.
	    \end{enumerate}
	    We have succeeded in establishing a Lipschitz bound for $f_\Psi$ on each of the regions in the $\cT$-decomposition of $\cC_{p,\cT}$. The proof is completed once we define the constant $M' = M'(X_2)\ge 1$ in the obvious manner using the maximum of the constants obtained from cases (b) and (c).
	\end{proof}
	\noindent We also note the following observation which will be useful in the next section.
	\begin{proposition}\label{final-Lip-bound}
	    In the situation of Proposition \ref{final-grad-bound}, we have
	    \begin{align}\label{final-Lip-ineq}
	        \dist(f_\Psi(q_1),f_\Psi(q_2))\le 2\Lambda\lambda_0\cdot\dist(q_1,q_2)
	    \end{align}
	    whenever the distance between $q_1,q_2\in\cC_{p,\cT}\subset\bP_\cT$ is $\le 1$, where we set $\Lambda = \sup_{w\in V\cup E}\Lambda_w$.
	\end{proposition}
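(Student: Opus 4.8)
The plan is to bootstrap the region‑by‑region estimate of Proposition~\ref{final-grad-bound} into a single estimate with respect to the ambient metric on $\bP_\cT$, the decisive geometric input being that $\dist(q_1,q_2)\le 1$ forces $q_1$ and $q_2$ to lie in one region of the $\cT$-decomposition of $\cC_{p,\cT}$, or in two adjacent ones. First I would record that $f_\Psi$ is already controlled in the ambient metric on each region. Put $\Lambda=\sup_{w\in V\cup E}\Lambda_w$. By Definition~\ref{T-decomp-metric} each distance $\dist_w$ is the maximum of the distance on $\bP_\cT$ pulled back along a nonempty sub‑collection of the coordinate projections $\pi_{v,e}\colon\bP_\cT\to\bP^1$, whereas the distance on $\bP_\cT$ is the maximum over all of them; hence $\dist_w\le\dist$ on $R_w(p)$. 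Combined with $\text{Lip}_\cT(f_\Psi)\le\bm\Lambda\lambda_0$ from Proposition~\ref{final-grad-bound}, this shows that for every $w$ the restriction $f_\Psi|_{R_w(p)}$ is Lipschitz of constant $\le\Lambda_w\lambda_0\le\Lambda\lambda_0$ with respect to the restriction of the ambient metric.

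The main point is the claim: if $\dist(q_1,q_2)\le 1$, then either $q_1,q_2$ lie in a common region $R_w(p)$, or, after relabelling, $q_1\in R_v(p)$ and $q_2\in R_e(p)$ with $v\in\partial(e)$, so that $q_1$ and $q_2$ sit on the two sides of the single circle $\Gamma_{v,e}(p)$. Here I would use the explicit description of the $\cT$-decomposition coming from Lemmas~\ref{prop-family-curve} and~\ref{compact-curve-prop}: for a vertex $v$, a point of $\cC_{p,\cT}$ lying in no region indexed by the subtree $\cT_v$ is carried by $\pi_v$ to $[1:0]$; a point lying in a region indexed by a proper descendant of $v$, or in a neck strictly below $v$, is carried by $\pi_v$ into $\bigcup_{e\in v^+}B^2(z^*_{v,e},|\rho^*_{v,e}|)\subset B^2(3\theta)$; a point of $R_v(p)$ is carried into $B^2(1)$; and a point of the neck/end just above $v$ is carried outside $\text{int}\,B^2(1)$. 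For any two non‑adjacent regions one can therefore single out a vertex $v$ for which, in the coordinate $\pi_v$ (or in the rescaled coordinate $\pi_{v,e}=\varphi_{p,v,e}\circ\pi_v$, together with \eqref{M-def3}), the images of $q_1$ and $q_2$ lie respectively in $\{[1:0]\}$ and $B^2(1)$; in $\{[1:0]\}$ and $B^2(3\theta)$; in $B^2(1)$ and $\{|z|\ge\tau^{-1}\}$; or in $B^2(3\theta)$ and $\{|z|\ge 1\}$. In these four cases the round distance between the images is at least, respectively, $\tfrac\pi2$, $\pi-6\theta$, $2\arctan(\tau^{-1})-\tfrac\pi2$, $\tfrac\pi2-6\theta$; since $\theta=\epsilon$ and $\tau=4\epsilon$, each of these exceeds $1$ once $\epsilon$ is a small enough absolute constant (a restriction compatible with \eqref{const-1}), so $\dist(q_1,q_2)>1$, a contradiction.

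Granting the claim: if $q_1,q_2$ share a region, the first paragraph already gives the stronger bound $\dist(f_\Psi(q_1),f_\Psi(q_2))\le\Lambda\lambda_0\,\dist(q_1,q_2)$. If instead $q_1\in R_v(p)$, $q_2\in R_e(p)$ with $v\in\partial(e)$, I would take $q_3\in\Gamma_{v,e}(p)$ to be the point closest to $q_1$ in whichever of the round metrics $\pi_v^*\dist$, $\pi_{v,e}^*\dist$ exhibits $\Gamma_{v,e}(p)$ as a round circle; a coordinate‑by‑coordinate comparison — using that $q_1$ lies outside this circle and $q_2$ inside, that radial motion towards the circle is distance‑decreasing to any interior point, and that in every coordinate not belonging to the local picture $q_1,q_2,q_3$ have essentially the same image — then yields $\dist(q_1,q_3)\le\dist(q_1,q_2)$ and $\dist(q_3,q_2)\le\dist(q_1,q_2)$. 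Applying the first paragraph in $R_v(p)$ and in $R_e(p)$,
\begin{align}
\dist(f_\Psi(q_1),f_\Psi(q_2))
&\le\dist(f_\Psi(q_1),f_\Psi(q_3))+\dist(f_\Psi(q_3),f_\Psi(q_2))\\
&\le\Lambda\lambda_0\bigl(\dist(q_1,q_3)+\dist(q_3,q_2)\bigr)\le 2\Lambda\lambda_0\,\dist(q_1,q_2),
\end{align}
which is \eqref{final-Lip-ineq}.

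The hard part is the middle paragraph: it is a finite but somewhat intricate case analysis over the relative position of $q_1$ and $q_2$ in the nested system of thick/neck/end regions, and it requires careful bookkeeping with the twisted coordinates $\pi_{v,e}$ and the inequalities \eqref{M-def1}–\eqref{M-def4} in order to exhibit, for every non‑adjacent pair of regions, one coordinate projection witnessing $\dist(q_1,q_2)>1$. The connecting step is then routine, the only subtlety being to keep track of the bounded distortion between the flat and round metrics on the individual $\bP^1$‑factors when choosing $q_3$.
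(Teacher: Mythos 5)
Your skeleton is the same as the paper's: dispose of the case where $q_1,q_2$ share a region via $\dist_w\le\dist$, and reduce the rest to an adjacent pair $q_1\in R_v(p)$, $q_2\in R_e(p)$ with $v\in\partial(e)$. Two concrete problems remain. First, the dichotomy in your middle paragraph is quantitatively false at the value $\epsilon=\tfrac18$ that the paper actually fixes in \textsection\ref{last-proof}: your own lower bounds $2\arctan(\tau^{-1})-\tfrac\pi2=2\arctan 2-\tfrac\pi2\approx 0.64$ and $\tfrac\pi2-6\theta\approx 0.82$ are below $1$, and indeed two points buried in sibling end regions $R_e(p)$, $R_{e'}(p)$ with $e^-=e'^-=v$ can have $\dist(q_1,q_2)\approx 0.64<1$ while neither lies in a thick region. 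You flag that $\epsilon$ must be shrunk; that is a legitimate repair (Remark \ref{cov-const} permits it), but it is not merely "compatible with \eqref{const-1}" --- it forces a different $\epsilon$ than the one used to produce the explicit constants of Theorem \ref{intro-theorem}, so it must be stated and propagated. (To be fair, the paper's own one-line assertion that "one of the two points must lie in a thick region" has the same issue.)

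The more serious gap is the connecting step. The two inequalities $\dist(q_1,q_3)\le\dist(q_1,q_2)$ and $\dist(q_3,q_2)\le\dist(q_1,q_2)$ are exactly what your triangle inequality needs, and they are asserted rather than proved. In the one coordinate $\pi_{v,e}$ in which $\Gamma_{v,e}(p)$ is the equator, the nearest-point choice does work (using $\dist(q_1,q_2)\le 1<\tfrac\pi2$ and the spherical law of cosines), but $\dist$ on $\bP_\cT$ is the \emph{maximum} over all projections $\pi_{v'',e''}$, and the maximum need not be attained in the distinguished coordinate. In another coordinate --- already in $\pi_{v,e_v}$, where $\Gamma_{v,e}(p)$ is the small circle $S^1(z_{v,e},|\rho_{v,e}|)$ of diameter comparable to $|\rho_{v,e}|$ --- the image of the nearest point chosen in the $\pi_{v,e}$ coordinate can sit a definite fraction of that circumference away from $\pi_{v,e_v}(q_1)$, which can exceed $\dist(q_1,q_2)$ when the latter is small; "essentially the same image" does not give the needed inequality. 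So either you must carry out the coordinate-by-coordinate comparison honestly (choosing $q_3$ on the minimizing geodesic in the distinguished coordinate and then estimating the distortion of the transition maps $\varphi_{p,v,e}$ using \eqref{M-def1}--\eqref{M-def4}), or you should switch to the paper's route, which introduces no intermediate point at all: it joins $q_1$ to $q_2$ by a path of length controlled by $\dist(q_1,q_2)$ (as in Lemma \ref{annulus-dist} and part (a) of the proof of Proposition \ref{final-grad-bound}) and integrates the pointwise gradient bound \eqref{c-mod-b-grad-bound} along it, the factor $2$ absorbing the length-versus-distance distortion.
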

	\begin{proof}
	    If $q_1,q_2$ are in the same end/neck/thick region, then the statement is obvious from Proposition \ref{final-grad-bound} above. If $\dist(q_1,q_2)\le 1$, then one of the two points must lie in a thick region. Assume, without loss of generality, that $q_1\in R_v(p)$ and $q_2\in R_e(p)$ for some $v\in\partial(e)$. Using the (rescaled) bound \eqref{c-mod-b-grad-bound} and $\dist(q_1,q_2)\le 1$, it is now easy to verify \eqref{final-Lip-ineq}.
	\end{proof}
	\end{section}
	\begin{section}{Proof of the main result}\label{last-proof}
    In this section, we will prove Theorem \ref{intro-theorem}. 
    \subsection{Proof of Theorem \ref{intro-theorem}(i)}
    Let $u = [\Sigma,x_1,\ldots,x_\ell,f]\in\Mbar_{0,\ell}(X,J;K)^{\le A}$ be given. Take $\lambda_0$ to be as in Theorem \ref{ell-boot}, $\epsilon = \frac18$ and let $\lambda$ be the largest number satisfying \eqref{const-2}--\eqref{const-3}. Then, apply Theorem \ref{main-covering-theorem} to find a tree $\cT$ with a subset $F\subset E_\text{ext}$ in bijection with $\{x_1,\ldots,x_\ell\}$, constants $\theta,\tau,\bm\alpha,\eta,\bm\Lambda$ and an isomorphism $\Psi:\Sigma\to\cC_{p,\cT}$ of $u$ (as a stable map) with a point $(p,f_\Psi:\cC_{p,\cT}\to X)$ of the space
    \begin{align}
        \cM_{\cT,F}(X,J;\theta,\tau,\bm\alpha;K,\eta,\bm\Lambda).
    \end{align}
    Proposition \ref{final-Lip-bound} shows that $f_\Psi:\cC_{p,\cT}\to X$ has the property that for $q_1,q_2\in\cC_{p,\cT}\subset\bP_\cT$, we have
    \begin{align}\label{final-Lip-2}
        \dist(f_\Psi(q_1),f_\Psi(q_2))\le2|\bm\Lambda|\lambda_0\cdot\dist(q_1,q_2)
    \end{align}
    whenever $\dist(q_1,q_2)\le 1$, where $|\bm\Lambda|$ denotes the maximum of $\Lambda_w$ over $w\in V\cup E$. By inspecting the explicit formula for $\bm\Lambda$ in Theorem \ref{main-covering-theorem} and noting that a non-constant genus $0$ stable $J$-holomorphic map must have energy $\ge q$ (from Lemma \ref{mvi}) it follows that we can replace $2|\bm\Lambda|$ in \eqref{final-Lip-2} by the number $\Lambda$ where
    \begin{align}
        \log\Lambda = c\cdot(\ell + \lfloor A/\lambda^2\rfloor)
    \end{align}
    with $c\ge 9$ being a sufficiently large positive constant (independent of the bounds on the geometry of $X$). Now, consider any pair $(v,e)\in V\times E$ such that $v\in\partial(e)$. By Lemma \ref{tree-edge-bound}, there are
    \begin{align}
        \sum_{v\in V}\deg(v) \le 3\cdot|E_\text{ext}| - 6 \le 3\cdot(\ell + \lfloor A/\lambda^2\rfloor)
    \end{align}
    such pairs. Define $m = c\cdot(\ell + \lfloor A/\lambda^2\rfloor)$. For each pair $(v,e)$ with $v\in\partial(e)$, the map $\pi_{v,e}:\cC_{p,\cT}\subset\bP_\cT\to\bP^1$ is an isomorphism over $S^1\subset\bP^1$, and therefore, we can define the points $a_{v,e},b_{v,e},c_{v,e}\in\cC_{p,\cT}$ by
    \begin{align}
      a_{v,e} &= \pi_{v,e}^{-1}[1:1]\\
      b_{v,e} &= \pi_{v,e}^{-1}[e^{2\pi i/3}:1]\\
      c_{v,e} &= \pi_{v,e}^{-1}[e^{-2\pi i/3}:1].
    \end{align}
    Now, endow $\cC_{p,\cT}$ with $\ell + m$ distinct smooth marked points, where 
    \begin{itemize}
        \item $\ell$ of them consist of $x_1,\ldots,x_\ell$,
        \item $3\cdot\sum_{v\in V}\deg(v)$ of them consist of $\{a_{v,e},b_{v,e},c_{v,e}\}_{v\in\partial(e)}$ and,
        \item the remaining marked points are chosen arbitrarily.
    \end{itemize}
    This induces an $m$-decoration $\varphi:\cC_{p,\cT}\to\cC_s\subset Z_{m+\ell}$, where $s\in\Mbar_{0,\ell + m}$. Moreover, recalling the definition of the metric on $\bP_\cT$, it is immediate that $\varphi^{-1}$ has Lipschitz constant $\le 1$. Thus, the $1$-local Lipschitz constant of $u^\varphi$ is $\le\Lambda$.
    \subsection{Proof of Theorem \ref{intro-theorem}(ii)}
    Let $B$ be a $(\delta/\Lambda)$-net for $Z_{\ell + m}$ and $C$ be a $(\lambda_0\delta)$-net for $K$. It then follows, as in the proof of Lemma \ref{map-space-net}, that taking $N\ge(1 + |C|)^{|B|}$ will suffice. It therefore remains to estimate $|B|$ and $|C|$. Exactly as in the proof of Theorem \ref{holo-map-tot-bound}, we see that we can take
    \begin{align}
        |B|&\le \left(8\pi\cdot\frac{\Lambda^2}{\delta^2}\right)^{\binom{m+\ell}{3}}\\
        |C|&\le\sigma\delta^{-2k}\cdot\nu(K,\lambda_0)
    \end{align}
    which completes the proof. Here, $\sigma = \sigma(X_0)$ is a constant depending only the bounds on the geometry of $X$ and $\nu(K,\lambda_0)$ is the size of the smallest $\lambda_0$-net in $K$.
	\end{section}
	\appendices
	\begin{section}{Proofs of \emph{a priori} estimates}\label{est-appx}
    In this appendix, we prove the \emph{a priori} estimates from Lemmas \ref{ell-boot} and \ref{exp-decay}.
    \subsection{Elliptic bootstrapping (proof of Lemma \ref{ell-boot})}\label{ell-boot-appx}
    Let $\lambda_0 = \lambda_0(X_1)$ be chosen so that for any point $x\in X$, the set of points with $\dist(x,\cdot)\le 2\lambda_0$ lies in a unit coordinate ball (centred at $x$) in which $J,g$ are given by square matrices which (along with their inverses) have $C^k$ norms $\le$ the $C^k$ bounds on the geometry of $X$. Now, let us consider a map $v:B^2(1)\to X$ which is $J$-holomorphic and satisfies $\|dv\|_\infty\le\lambda_0$. Then, using elliptic bootstrapping for the Laplacian in local coordinates (as in the proof of \cite[Lemma B.11.3]{ParVFC}), we obtain constants $c_k = c_k(X_{k+1})$ such that
    \begin{align}\label{ell-boot-local}
        |(\nabla^kv)(0)|\le c_k\lambda_0
    \end{align}
    for each integer $k\ge 2$.\\\\
    Now, consider a $J$-holomorphic map $u:B^2(r)\to X$ (with a bounded first derivative) as in the statement of Lemma \ref{ell-boot}. Define the quantity $\rho>0$ by $\rho\|du\|_\infty = \lambda_0$. First, suppose $\rho\le r$ and define the map $v_\rho:B^2(1)\to X$ by $v_\rho(z) = u(\rho z)$. We then have $\|dv_\rho\|_\infty = \lambda_0$ and thus, by \eqref{ell-boot-local}, we have
    \begin{align}
        \rho^k|(\nabla^k u)(0)| = |(\nabla^k v_\rho)(0)|\le c_k\lambda_0
    \end{align}
    for each $k\ge 2$. On the other hand, if $\rho>r$, we argue as follows. Let $X'$ be the Riemannian manifold which is $X$ endowed with the metric $g' = (\frac \rho r)^2g$ and the corresponding distance function $\dist' = \frac \rho r\dist$. The crucial observation is that the bounds on the geometry of $(X',g',J)$ are $\le$ the bounds on the geometry of $(X,g,J)$. Define $v_r:B^2(1)\to X'$ by $v_r(z) = u(rz)$ and note that $\|dv_r\|'_\infty = \lambda_0$. We then get
    \begin{align}
        r^k|(\nabla^ku)(0)| = |(\nabla^k v_r)(0)| = \textstyle\frac r\rho|(\nabla^k v_r)(0)|'\le c_k\lambda_0(\frac r\rho)
    \end{align}
    for each $k\ge 2$. In particular, the sum of these two bounds is always true (irrespective of the relative sizes of $r$ and $\rho$) and gives the estimate $|(\nabla^k u)(0)|\le \frac{c_k\lambda_0}{\rho}(\frac1r + \frac1\rho)^{k-1}$ as desired.
    \subsection{Mean value inequality (alternative proof of Lemma \ref{mvi})}\label{alt-mvi}
    Consider any $J$-holomorphic $u:B^2(r)\to X$. Assume that $|du(0)|\ne 0$ since otherwise there is nothing to prove. By Hofer's Lemma (see \cite[Lemma 4.6.4]{McSa}), we can find $0<\rho\le\frac 12r$ and $z_0\in B^2(r-2\rho)$ such that we have $\rho|du(z_0)|\ge\frac12r|du(0)|$ and
    \begin{align}
        \sup_{B^2(z_0,\rho)}|du|\le 2|du(z_0)|
    \end{align}
    Now, define $v:B^2(\rho)\to X$ by $v(z) = u(z + z_0)$. By Lemma \ref{ell-boot} (proved above), we get the estimate
    \begin{align}
        \sup_{B^2(\frac12\rho)}|\nabla^2 v|\le c_2\|dv\|_\infty\left(\frac{2}{\rho} + \frac{\|dv\|_\infty}{\lambda_0}\right) \le 2^2c_2|dv(0)|\left(\frac{1}{\rho} + \frac{|dv(0)|}{\lambda_0}\right)
    \end{align}
    and thus, for $|z|\le\frac12\rho$, we deduce the estimate
    \begin{align}
        \frac{|dv(z)|}{|dv(0)|}\ge 1 - 2^2c_2|z|\left(\frac{1}{\rho} + \frac{|dv(0)|}{\lambda_0}\right).
    \end{align}
    From this, we deduce the simpler lower bound $|dv(z)|\ge\frac12|dv(0)|$ for $|z|\le\rho' := (2^3c_2)^{-1}\left(\frac1\rho + \frac{|dv(0)|}{\lambda_0}\right)^{-1}$. Noting that $2^3\rho'\le\rho$, this now allows us to estimate the energy of $u$ from below as follows.
    \begin{align}\label{mv-alt-est}
        E(u)\ge E(v) =  \frac12\int_{B^2(\rho')}|dv|^2>\frac\pi {2^3}|dv(0)|^2\rho'^2
        \ge\frac{\pi c_2^{-2}}{2^9}\left(\frac2{r|du(0)|} + \frac{1}{\lambda_0}\right)^{-2}
        \ge C^{-1}\min\{r|du(0)|,\lambda_0\}^2
    \end{align}
    where $C$ is defined to make the last inequality true. In the penultimate inequality, we have used the fact that $\rho|dv(0)|\ge\frac12r|du(0)|$. Now, taking $q:=C^{-1}\lambda_0^2$, we get $r|du(0)|\le\sqrt{C\cdot E(u)}$ if $E(u)\le q$, as desired.
    \begin{remark}
        The idea of this proof was suggested by John Pardon. Note that, in this proof, the constants $C,q$ depend on $C^3$ bounds on the geometry of $X$ while the proof of \cite[Lemma 4.3.1(i)]{McSa} needs only $C^2$ bounds.
    \end{remark}
    \subsection{Gromov--Schwarz and isoperimetric inequality}\label{gs-ip}
    To prove Lemma \ref{exp-decay}, we will need (a slight strengthening of) the Gromov--Schwarz Lemma (\cite[1.2.A \& 1.2.B$'$]{gromov85}) and (a variant of) the symplectic isoperimetric inequality.
    \begin{lemma}\label{gs}
        There exist constants $\varepsilon_0 = \varepsilon_0(X_2)$ and $G$ such that if $u:B^2(r)\to X$ is a $J$-holomorphic map with $\|\dist(u(\cdot),x)\|_\infty\le\varepsilon_0$ for some point $x\in X$, then we have
        \begin{align}
            r^2|du(0)|^2\le \frac{G^2}{r^2}\int_{B^2(r)}|\dist(u(\cdot),x)|^2
        \end{align}
    \end{lemma}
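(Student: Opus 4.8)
The plan is to reduce to the case $r=1$ by rescaling, pass to a coordinate chart centred at $x$, run a Caccioppoli-type estimate that bounds $\int|du|^2$ by $\int\dist(u(\cdot),x)^2$ using a primitive of the standard symplectic form, and then invoke the mean value inequality (Lemma \ref{mvi}) to convert this into the pointwise bound at the origin.

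First I would rescale: replacing $u$ by $w\mapsto u(rw)$ on $B^2(1)$ leaves $\|\dist(u(\cdot),x)\|_\infty$ unchanged, multiplies $|du(0)|$ by $r$, and multiplies $\int\dist(u(\cdot),x)^2$ by $r^{-2}$, so it suffices to treat $r=1$. Next, choose $\varepsilon_0=\varepsilon_0(X_2)$ small enough that $\{\dist(x,\cdot)\le\varepsilon_0\}\subset X$ lies in a coordinate chart $\Phi$ centred at $x$, with $\Phi(x)=0$, in which $g$ is comparable to the Euclidean metric of $\bC^k$ (with comparison constant depending only on the bounds on the geometry, where $2k=\dim X$) and in which $J\circ\Phi^{-1}$ equals the standard structure $J_0$ at $0$ and stays within $\tfrac13$ of $J_0$ in $C^0$ on the chart. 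Set $v=\Phi\circ u:B^2(1)\to\bC^k$, which then maps into a small ball about $0$. From $\partial_s v+(J\circ v)\partial_t v=0$ — equivalently $\partial_t v=(J\circ v)\partial_s v$ — and the fact that $J_0$ acts as multiplication by $i$, the antiholomorphic part of $dv$ simplifies to $\bar\partial v=\tfrac i2(J\circ v-J_0)\partial_s v$, whence $|\bar\partial v|\le\mu|\partial v|$ pointwise with $\mu\le\tfrac12$ once $\varepsilon_0$ is small. Finally, for any smooth map into $\bC^k$ one has the algebraic identity $v^*\omega_0=(|\partial v|^2-|\bar\partial v|^2)\,dA$, where $\omega_0=\sum_j dx^j\wedge dy^j$ and $dA$ is Lebesgue measure; in particular $v^*\omega_0\ge0$ and $|dv|^2=2(|\partial v|^2+|\bar\partial v|^2)\le\tfrac{2(1+\mu^2)}{1-\mu^2}\,v^*\omega_0/dA$.

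For the main estimate, fix $\phi\in C_c^\infty(B^2(\tfrac12))$ with $\phi\equiv1$ on $B^2(\tfrac14)$ and $\|\nabla\phi\|_\infty\le8$, and let $\beta=\tfrac12\sum_j(x^j\,dy^j-y^j\,dx^j)$, the radial primitive of $\omega_0$, so $d\beta=\omega_0$, $\beta(0)=0$ and $|\beta|_p\le\tfrac12|p|$. Since $\phi^2\,v^*\beta$ is compactly supported, Stokes' theorem gives $\int\phi^2\,v^*\omega_0=-2\int\phi\,d\phi\wedge v^*\beta$, and combining the pointwise bound $|v^*\beta|\le\tfrac12|v|\,|dv|$ with the Cauchy--Schwarz inequality yields
\begin{align*}
\int_{B^2(1/4)}|dv|^2
&\le \tfrac{2(1+\mu^2)}{1-\mu^2}\int\phi^2\,v^*\omega_0\\
&\le \tfrac{2(1+\mu^2)}{1-\mu^2}\,\|\nabla\phi\|_\infty\,\|v\|_{L^2(B^2(1/2))}\Big(\int\phi^2|dv|^2\Big)^{1/2}.
\end{align*}
Absorbing $\big(\int\phi^2|dv|^2\big)^{1/2}$ into the left-hand side (the case $dv\equiv0$ on $B^2(\tfrac14)$ being trivial) gives $\int_{B^2(1/4)}|dv|^2\le C_1\,\|v\|^2_{L^2(B^2(1/2))}$ with $C_1=C_1(X_2)$. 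By the comparison between $g$ and the Euclidean metric on the chart, this reads $E(u,B^2(\tfrac14))\le C_2\int_{B^2(1)}\dist(u(\cdot),x)^2$ with $C_2=C_2(X_2)$.

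To conclude, note $\int_{B^2(1)}\dist(u(\cdot),x)^2\le\pi\varepsilon_0^2$, so after shrinking $\varepsilon_0$ once more (still depending only on $X_2$) we may assume $E(u,B^2(\tfrac14))\le q$, the threshold in Lemma \ref{mvi}. Applying Lemma \ref{mvi} to $u|_{B^2(1/4)}$ gives $\tfrac1{16}|du(0)|^2\le C\,E(u,B^2(\tfrac14))\le CC_2\int_{B^2(1)}\dist(u(\cdot),x)^2$, which is the desired inequality with $G^2=16CC_2$; undoing the rescaling handles general $r$. I do not expect a genuine obstacle: this is a careful but routine elliptic estimate. The one structural point worth isolating is that the $\omega_0$-energy density of an almost-$J_0$-holomorphic map is nonnegative and — because the $C^0$ smallness forces the antiholomorphic part to be a small fraction of the holomorphic part — dominates the full energy density; this is exactly what lets the Stokes/primitive trick bound $\|dv\|_{L^2}^2$ by $\|v\|_{L^2}^2$ with no loss. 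The remaining work is bookkeeping: tracking which constants depend on which $C^k$ bounds, and choosing $\varepsilon_0$ small enough both to land in a good chart and to meet the energy threshold of Lemma \ref{mvi}.
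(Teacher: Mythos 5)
Your proof is correct and follows essentially the same route as the paper's: rescale to $r=1$, pass to a coordinate chart centred at $x$, establish a Caccioppoli-type estimate $E(u,B^2(c))\lesssim\int\dist(u(\cdot),x)^2$ by integrating by parts against a cutoff, and conclude with the mean value inequality (Lemma \ref{mvi}). The only difference is cosmetic: the paper derives the Caccioppoli step from the identity $\|\partial w\|_{L^2}=\|\bar\partial w\|_{L^2}$ for the compactly supported map $w=\varphi u$ combined with the perturbed equation $\bar\partial u=T(u)\cdot\partial u$, whereas you use the equivalent Stokes/primitive-of-$\omega_{\text{std}}$ formulation.
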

    \begin{proof}
        By the scale invariance of the result, we may assume that $r = 1$. By taking $\varepsilon_0\le\lambda_0$ (from \textsection\ref{ell-boot-appx}), we can assume that $u$ maps into a coordinate ball with $x = 0$ where $\omega,J,g$ are given by square matrices which (along with their inverses) have $C^k$ norms $\le$ the $C^k$ bounds on the geometry of $X$. Further, we can assume that $J(0) = i$. The $J$-holomorphic curve equation for $u$ can be written as
        \begin{align}
            \delbar u = T(u)\cdot\partial u
        \end{align}
        where $T(\cdot)$ is a tensor vanishing at $0$ and satisfying $\|T\|_{C^1}\le G' = G'(X_1)$. Let $0\le \varphi\le 1$ be a smooth cutoff function on $B^2(1)$ which is $\equiv 1$ on $B^2(\frac12)$, $\equiv 0$ outside $B^2(\frac34)$ and satisfies $\|\nabla\varphi\|_\infty\le 5$. Letting $v = \varphi u$, we compute that $v$ satisfies the equation $\delbar v - T(u)\cdot\partial v = \delbar\varphi\cdot u - T(u)\cdot(u\partial\varphi)$ and deduce the estimate
        \begin{align}
            \|\partial v\|_{L^2} = \|\delbar v\|_{L^2}\le G'\varepsilon_0\|\partial v\|_{L^2} + \|u\|_{L^2}(1 + G'\varepsilon_0)\|\nabla\varphi\|_\infty
        \end{align}
        where, in the first step, we have used integration by parts. From this, we get
        \begin{align}
            \sqrt{E(u,B^2(\textstyle\frac12))}\le\|\nabla v\|_{L^2}\le 2\|\partial v\|_{L^2}\le 2\|\nabla\varphi\|_\infty\left(\frac{1+G'\varepsilon_0}{1 - G'\varepsilon_0}\right)\|u\|_{L^2}\le 14\|u\|_{L^2}
        \end{align} 
        with the last two inequalities holding provided we take $G'\varepsilon_0\le\frac16$. Now, if we further require $14^2\pi\varepsilon_0^2\le q$ (where $q$ is the constant from Lemma \ref{mvi}), then we can use Lemma \ref{mvi} to find an explicit constant $G$ for which the inequality $|du(0)|\le G\|u\|_{L^2}$ holds, as desired.
    \end{proof}
    \begin{lemma}\label{iso-peri}
        If $\gamma:S^1\to\bC^n$ is a smooth map, then we have
        \begin{align}
            \left|\int_{S^1}\gamma^*\lambda_{\normalfont\text{std}}\right|\le\frac1{4\pi}\left(\int_{S^1}\left|\dot\gamma(t)\right|dt\right)^2\le\frac12\int_{S^1}\left|\dot\gamma(t)\right|^2dt
        \end{align}
        where $\lambda_{\normalfont\text{std}} = \frac i4\sum_{j=1}^n(z_jd\bar z_j - \bar z_jdz_j)$ is the standard Liouville form on $\bC^n$.
    \end{lemma}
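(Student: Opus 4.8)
The plan is to treat the two inequalities separately: the second is elementary, and the first carries the genuine isoperimetric content, which I will obtain from a Fourier-series bound after a preliminary reparametrization.

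\textbf{The second inequality.} This is just the Cauchy--Schwarz inequality on the circle: since $S^1$ has total measure $2\pi$,
\begin{align}
  \left(\int_{S^1}|\dot\gamma(t)|\,dt\right)^2 \le 2\pi\int_{S^1}|\dot\gamma(t)|^2\,dt,
\end{align}
and dividing by $4\pi$ gives $\frac1{4\pi}\left(\int_{S^1}|\dot\gamma|\right)^2\le\frac12\int_{S^1}|\dot\gamma|^2$.

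\textbf{A Fourier lemma.} The core estimate is the (a priori weaker) bound $\left|\int_{S^1}\eta^*\lambda_{\mathrm{std}}\right| \le \tfrac12\int_{S^1}|\dot\eta(t)|^2\,dt$, valid for \emph{every} smooth loop $\eta:S^1\to\bC^n$. To prove it, first pull back the Liouville form: writing $\eta = (\eta_1,\dots,\eta_n)$, a direct computation gives $\eta^*\lambda_{\mathrm{std}} = -\tfrac12\sum_j\mathrm{Im}(\eta_j\overline{\dot\eta_j})\,dt$. Expanding $\eta$ in a Fourier series $\eta(t) = \sum_{k\in\bZ}c_k e^{ikt}$ with $c_k\in\bC^n$ and using Parseval's identity termwise yields
\begin{align}
  \int_{S^1}\eta^*\lambda_{\mathrm{std}} = \pi\sum_{k\in\bZ}k\,|c_k|^2, \qquad \int_{S^1}|\dot\eta(t)|^2\,dt = 2\pi\sum_{k\in\bZ}k^2\,|c_k|^2,
\end{align}
where $|c_k|^2 = \sum_j|c_k^j|^2$. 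Since $|k|\le k^2$ for every integer $k$, the triangle inequality gives $\left|\int_{S^1}\eta^*\lambda_{\mathrm{std}}\right| \le \pi\sum_k|k|\,|c_k|^2 \le \pi\sum_k k^2|c_k|^2 = \tfrac12\int_{S^1}|\dot\eta|^2$.

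\textbf{Reducing to a favourable parametrization.} Because $\frac1{4\pi}(\int|\dot\gamma|)^2\le\frac12\int|\dot\gamma|^2$, the Fourier lemma alone is too weak; the point is to exploit that both $\int_{S^1}\gamma^*\lambda_{\mathrm{std}}$ and $\int_{S^1}|\dot\gamma|\,dt$ are invariant under orientation-preserving reparametrization of $S^1$. Set $L = \int_{S^1}|\dot\gamma|\,dt$; if $L=0$ the loop is constant and both sides vanish, so assume $L>0$. For $\delta>0$ put $w_\delta(t) = \sqrt{|\dot\gamma(t)|^2 + \delta^2}$ (smooth and everywhere positive, with $|\dot\gamma|\le w_\delta\le|\dot\gamma|+\delta$) and $L_\delta = \int_{S^1}w_\delta\in[L,L+2\pi\delta]$; then $\tau_\delta(t) := \frac{2\pi}{L_\delta}\int_0^t w_\delta$ is an orientation-preserving diffeomorphism of $S^1$. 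The reparametrized loop $\gamma_\delta := \gamma\circ\tau_\delta^{-1}$ is smooth, satisfies $\|\dot\gamma_\delta\|_\infty\le L_\delta/2\pi$ (since $|\dot\gamma|\le w_\delta$), and has $\int_{S^1}|\dot\gamma_\delta| = L$. Applying the Fourier lemma to $\gamma_\delta$ and using $|\dot\gamma_\delta|^2\le\frac{L_\delta}{2\pi}|\dot\gamma_\delta|$,
\begin{align}
  \left|\int_{S^1}\gamma^*\lambda_{\mathrm{std}}\right| = \left|\int_{S^1}\gamma_\delta^*\lambda_{\mathrm{std}}\right| \le \frac12\int_{S^1}|\dot\gamma_\delta|^2 \le \frac{L_\delta}{4\pi}\int_{S^1}|\dot\gamma_\delta| = \frac{L_\delta L}{4\pi} \le \frac{(L+2\pi\delta)L}{4\pi},
\end{align}
and letting $\delta\to0^+$ gives $\left|\int_{S^1}\gamma^*\lambda_{\mathrm{std}}\right|\le\frac1{4\pi}L^2$. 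The only mildly delicate point is this last reduction: producing a \emph{smooth} near-constant-speed reparametrization even when $\dot\gamma$ has zeros, which the mollified length density $w_\delta$ handles cleanly. Everything else is a routine Parseval computation and an application of Cauchy--Schwarz.
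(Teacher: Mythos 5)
Your proof is correct and follows essentially the same route as the paper's (which defers the details to \cite[Lemma 4.4.4]{McSa}): a Fourier/Parseval computation gives $(\mathrm{I})\le(\mathrm{III})$ for arbitrary loops, and reparametrization invariance of the action and the length then upgrades this to the sharper $(\mathrm{I})\le(\mathrm{II})$. Your mollified constant-speed reparametrization $w_\delta=\sqrt{|\dot\gamma|^2+\delta^2}$ is a clean way to handle zeros of $\dot\gamma$, but it is the same argument in substance.
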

    \begin{proof}
        Denote the three quantities to be compared as (I), (II) and (III). The inequality $(\text{I})\le(\text{III})$ is proved by a simple computation using the Fourier series expansion $\gamma(t) = \sum_m c_me^{-imt}$. The reparametrization invariance of (I) and (II), combined with Cauchy-Schwarz, can then be used to deduce $(\text{I})\le(\text{II})\le(\text{III})$. See \cite[Lemma 4.4.4]{McSa} for the complete details.
    \end{proof}
    \subsection{Long cylinders of small energy (proof of Lemma \ref{exp-decay})}\label{exp-decay-appx}
    The proof expands on ideas appearing in \cite[Lemma 4.7.3]{McSa} and \cite[Lemma 2.3]{HT-obg2}. Let $u:[R_-,R_+]\times S^1\to X$ be a $J$-holomorphic map as in the statement of Lemma \ref{exp-decay}, satisfying $\|du\|_\infty\le l$. Take $l$ to be small enough that the symplectic action is defined for all loops of length $<4\pi l$. Then, Stokes' theorem gives
    \begin{align}\label{action-diff-small}
        \int_{[a,b]\times S^1}u^*\omega = \cA(\gamma_a) - \cA(\gamma_b)
    \end{align}
    which holds for any $R_-\le a\le b\le R+$, provided that $|a-b|$ is sufficiently small. By subdividing $[R_-,R_+]$ into sufficiently small intervals and using a telescoping sum argument, we find that \eqref{action-diff-small} holds even if $|a-b|$ is not small. The estimate $|\cA(\gamma_s)|\le c\cdot\ell(\gamma_s)^2$, for $R_-\le s\le R_+$, is a consequence of Lemma \ref{iso-peri}, the Darboux theorem and the bounds on the geomery of $X$. Next, note that we have the energy estimate
    \begin{align}
        E(u)\le C'\int_{[R_-,R_+]\times S^1} u^*\omega = C'\left(\cA(\gamma_{R_-}) - \cA(\gamma_{R_+})\right)\le 2C'c(2\pi l)^2
    \end{align}
    where $C' = C'(X_0)$ is a positive constant which quantitatively expresses the fact that $\omega$ tames $J$. Now, arguing as in \cite[Lemma 4.7.3]{McSa}, we can find constants $K' = K'(X_2)$ and $\mu = \mu(X_2)$ such that we have
    \begin{align}\label{initial-decay}
        |du(s,t)|\le K'\sqrt{E(u)}\left(e^{\mu(s-R_+)} + e^{\mu(R_--s)}\right)
    \end{align}
    for all $(s,t)\in [R_-+\frac14,R_+-\frac14]\times S^1$. For completeness, we outline this argument in our context. For $0\le r<\frac{R_+-R_-}2$, define the function $f(r) = E(u|_{[R_-+r,R_+-r]\times S^1})$. Differentiate $f$ to get
    \begin{align}
        -4\pi f'(r)&\ge \ell(\gamma_{R_-+r})^2 + \ell(\gamma_{R_+-r})^2\\
        &\ge c^{-1}\left(\cA(\gamma_{R_-+r}) - \cA(\gamma_{R_+-r})\right)\\
        &=c^{-1}\int_{[R_-+r,R_+-r]\times S^1}u^*\omega\\
        &\ge (C'c)^{-1}f(r)
    \end{align}
    for all $0\le r<\frac{R_+-R_-}2$. Integrating this inequality for $f$, combining with Lemma \ref{mvi} and requiring $2C'c(2\pi\ell)^2\le q$ (where $q$ is the constant from Lemma \ref{mvi}) gives the estimate \eqref{initial-decay}. Integrating the estimate for $|du(s,t)|$ shows that $u|_{[R_-+\frac14,R_+-\frac14]\times S^1}$ maps into a coordinate ball in $X$ where we have bounds on the matrices representing $\omega,J,g$ (and their inverses). For simplicity of notation in what follows, let us assume that this is actually true for $u$ on the whole of $[R_-,R_+]\times S^1$. We can adjust the choice of the coordinate system a little more so that we have $J(0) = i$ and
    \begin{align}
        \langle u\rangle = \int_{S^1}u(\textstyle\frac{R_-+R_+}2,\theta)\,d\theta = 0
    \end{align}
    For the rest of the argument, we work exclusively in this coordinate system. Since shrinking $l$ reduces the $L^\infty$ bound for $u$ in this coordinate system, it now suffices to prove the following statement. Lemma \ref{ell-boot} will then give the higher derivative bounds, completing the proof of Lemma \ref{exp-decay}.
    \begin{lemma}\label{exp-decay-local}
        Let $J$ be an almost complex structure on $B^{2n}(1)\subset\bC^n$ with $J(0) = i$. Let \begin{align}
            v:[R_-,R_+]\times S^1\to B^{2n}(1)
        \end{align} 
        be a $J$-holomorphic map satisfying $\langle v\rangle = 0$. There exists a constant $\eta$, depending only on the $C^2$ bounds on $J$, and an absolute constant $H$ with the following significance. If $\|v\|_\infty\le\eta$ and $\|\nabla v\|_{L^2}\le\eta$ then we have
        \begin{align}
            |\nabla v(s,t)|\le H\|\nabla v\|_{L^2}(e^{s-R_+} + e^{R_--s})
        \end{align}
        for all $(s,t)\in[R_-+\frac12,R_+-\frac12]\times S^1$.
    \end{lemma}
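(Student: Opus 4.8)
The plan is to set up a second-order differential inequality for a suitable energy-type quantity on sub-cylinders and then bootstrap from an $L^2$-decay statement to the desired pointwise decay. First I would write the $J$-holomorphic equation for $v$ in the given coordinates as $\partial_s v + J(v)\partial_t v = 0$, and expand $J(v) = i + (J(v)-i)$ where the error term is controlled by $\|v\|_\infty$ together with the $C^1$ bound on $J$. Writing $v = a(s) + w(s,t)$ where $a(s) = \int_{S^1} v(s,t)\,dt$ is the mean over the circle, the key point is that $\langle v\rangle = 0$ only pins down the mean at the center slice; but the equation forces $a'(s)$ to be small (of size $O(\eta)\|\nabla v\|_{L^2(\text{slice})}$), so up to a harmless adjustment one may treat $w$, which has zero mean on each slice, as the main part. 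For $w$ one has the Poincar\'e-type inequality $\int_{S^1}|w(s,\cdot)|^2\,dt \le \int_{S^1}|\partial_t w(s,\cdot)|^2\,dt$ since the lowest Fourier mode vanishes, and this is what ultimately produces the exponent $1$ in the claimed decay rate $e^{s-R_+}+e^{R_--s}$.

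Next I would introduce $g(s) = \tfrac12\int_{S^1}|w(s,t)|^2\,dt$ (or, following the outline preceding the lemma, the tail energy $f(r) = E(v|_{[R_-+r,\,R_+-r]\times S^1})$) and differentiate twice. Using the equation to replace $\partial_s$ derivatives by $\partial_t$ derivatives, plus integration by parts on $S^1$ and the Poincar\'e inequality above, one obtains $g'' \ge (1 - c\eta)\,g$ for a constant $c$ depending only on the $C^2$ bounds on $J$; choosing $\eta$ small enough that $c\eta \le \tfrac12$, say, gives $g'' \ge \tfrac12 g$, hence $g'' \ge \kappa^2 g$ with $\kappa$ close to $1$ — and by being slightly more careful (absorbing the error into a shift of the cylinder by a bounded amount, or iterating the estimate) one upgrades $\kappa$ back to $1$ on a slightly shorter cylinder. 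A standard maximum-principle / comparison argument for the ODE $\phi'' = \phi$ with the boundary values of $g$ then yields $g(s) \le C\,\|\nabla v\|_{L^2}^2\,(e^{s-R_+} + e^{R_--s})^2$ on $[R_-+\tfrac14, R_+-\tfrac14]\times S^1$, i.e. $L^2$ exponential decay of $w$, and combined with the smallness of $a'$ the same for $\nabla v$ in $L^2$ over short sub-cylinders.

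Finally I would convert the $L^2$ decay into the pointwise decay. On each unit sub-cylinder $[s-\tfrac12,s+\tfrac12]\times S^1 \subset [R_-,R_+]\times S^1$, I would apply Lemma \ref{mvi} (mean value inequality) — whose hypothesis $E \le q$ is guaranteed once $\eta$ is taken small enough — to conclude $|\nabla v(s,t)|^2 \lesssim \int_{[s-1/2,s+1/2]\times S^1}|\nabla v|^2$, and then feed in the $L^2$ decay just obtained on that sub-cylinder; the loss of $\tfrac12$ at each end accounts for the stated range $[R_-+\tfrac12, R_+-\tfrac12]\times S^1$. The absolute constant $H$ is then assembled from the mean-value constant $C$ and the comparison constant, and $\eta$ is fixed as the minimum of the finitely many smallness requirements imposed above.

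I expect the main obstacle to be twofold: first, carefully handling the mean part $a(s)$ — showing the zero-mean condition at the center, together with the equation, really does force $a$ to decay at the right rate rather than drift linearly — since the naive Poincar\'e inequality only applies to the zero-mean-on-each-slice part $w$; and second, getting the exponent in the differential inequality to be exactly $1$ (not merely $1-o(1)$), which forces one to either shrink the cylinder by a universal bounded amount before stating the estimate or to run a short iteration argument, taking care that none of this degrades the constant $H$ into something depending on $R_\pm$.
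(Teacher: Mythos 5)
Your overall architecture (a differential inequality for a slice/tail energy, followed by an elliptic estimate to pass from $L^2$ to pointwise decay) matches the paper's, and you correctly identify the two real difficulties: the zero Fourier mode and the fact that the naive inequality only yields rate $1-O(\eta)$. But the proposal does not actually resolve the second difficulty, and the two fixes you suggest do not work as stated. If all you have is $g''\ge(1-c\eta)g$, the comparison argument gives decay at rate $\kappa=\sqrt{1-c\eta}<1$; the discrepancy between $e^{-\kappa r}$ and $e^{-r}$ over a cylinder of length $2R$ is a factor $e^{(1-\kappa)R}$, which is unbounded in $R$. Shrinking the cylinder by a universal amount only costs a bounded multiplicative constant and cannot change the exponential rate, and iterating the \emph{same} inequality does not improve $\kappa$ either. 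So as written, your $H$ would depend on $R_\pm$, which is exactly what the lemma forbids.

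The mechanism the paper uses (following Hutchings--Taubes) is a genuine two-step bootstrap in which the \emph{error term itself becomes exponentially decaying} after the first step. Step one is a crude first-order Stokes/isoperimetric argument giving $|v|+|\nabla v|\lesssim \eta(e^{\frac12(s-R_+)}+e^{\frac12(R_--s)})$, i.e.\ rate $\tfrac12$. Step two decomposes $v$ into the positive, negative and zero eigenspaces of $L=i\partial_t$ and derives coupled inequalities for $f_\pm(s)=\pm\tfrac12\langle Lv_\pm,v_\pm\rangle$ with error coefficient $\hat\varepsilon(s)\lesssim\eta(e^{\frac12(s-R_+)}+e^{\frac12(R_--s)})$. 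Because $\int_{R_-}^{R_+}\frac{\hat\varepsilon}{1-\hat\varepsilon}\,ds\le c''$ \emph{independently of $R_\pm$}, the weighted combination $f_+-\gamma f_-$ satisfies a clean rate-$2$ differential inequality up to an integrable correction, and one recovers rate exactly $1$ for $v_\pm$ with only a bounded constant loss. The zero mode is then handled by integrating $v_0'=\Pi_0(S(v)Lv)$, which is quadratically small and already exponentially decaying, from the center slice where $\langle v\rangle=0$. Your proposal is missing this use of the preliminary (suboptimal-rate) decay to make the perturbation integrable; without it the exponent cannot be pushed to $1$. (The final pointwise step is a minor difference: the paper bounds $\|v\|_{L^2}$ on unit sub-cylinders and applies the Gromov--Schwarz lemma, rather than the mean value inequality applied to $\|\nabla v\|_{L^2}$, because the Fourier argument naturally controls $v$ itself; either route is fine once the $L^2$ decay at rate $1$ is in hand.)
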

    \begin{proof}
        Set $R = \frac12(R_-+R_+)$ and define, for $0\le r< R$, the non-negative function $g$ by
        \begin{align}
            g(r) = \int_{[r+R_-,R_+-r]\times S^1}|\partial_tv|^2\,dt\,ds
        \end{align}
        and note that we have
        \begin{align}
            -{\textstyle\frac12}g'(r) &= {\textstyle\frac12}\int_{S^1} \left(|\partial_tv(R_+-r,t)|^2 + |\partial_tv(r+R_-,t)|^2\right)dt\\
            &\ge\int_{\partial[r+R_-,R_+-r]\times S^1}v^*\lambda_{\text{std}}\\
            &=\int_{[r+R_-,R_+-r]\times S^1}v^*\omega_{\text{std}}\\
            &=\int_{[r+R_-,R_+-r]\times S^1}\langle i\partial_sv,\partial_tv\rangle\,dt\,ds\\
            &\ge g(r)\cdot(1 - c'\|v\|_\infty)
        \end{align}
        where $c' = \|J\|_{C^1}$ and the last estimate follows from $i\partial_sv = (1 - i(J(v) - i))\partial_tv$. Taking $\eta\le\frac12c'^{-1}$, we get $g(r)\le e^{-r}\|\partial_tv\|^2_{L^2}$. Repeating the same computation with $\partial_sv$ instead of $\partial_tv$ and adding the results gives the energy decay estimate
        \begin{align}
            \int_{[r+R_-,R_+-r]\times S^1}|\nabla v|^2\,dt\,ds\le e^{-r}\int_{[R_-,R_+]\times S^1}|\nabla v|^2\,dt\,ds.
        \end{align}
        Using Lemma \ref{mvi} and requiring $\eta^2\le q$ gives a decay estimate for $|\nabla v|$. Integrating this and combining with the assumption that $\langle v\rangle = 0$ gives the estimate
        \begin{align}\label{initial-decay-local}
            |v(s,t)| + |(\nabla v)(s,t)|\le A(e^{\frac12(s-R_+)} + e^{\frac12(R_--s)})\|\nabla v\|_{L^2}
        \end{align}
        for all $(s,t)\in[R_-+\frac14,R_+-\frac14]\times S^1$, where $A$ is an absolute constant. Following arguments modeled on the proof of \cite[Lemma 2.3]{HT-obg2}, we will improve this decay estimate. For convenience, let us assume that \eqref{initial-decay-local} holds on all of $[R_-,R_+]\times S^1$. Fourier expand $v$ into eigenfunctions of the operator $L = i\partial_t$ on $S^1$ to get
        \begin{align}
            v(s,t) = \sum_m v_m(s)e^{-imt}.    
        \end{align}
        Let $\Pi_\pm$ and $\Pi_0$ be the orthogonal projections from $L^2(S^1)$ onto the $\pm$ and $0$ eigenspaces of the operator $L$ and define $v_\alpha = \Pi_\alpha v$ for $\alpha\in\{+,-,0\}$. The Cauchy-Riemann equation for $v$ translates to
        \begin{align}\label{cr-local}
            \partial_sv + Lv = S(v)\cdot Lv
        \end{align}
        where $S(\cdot) = (J(\cdot) - i)i$ and thus, satisfies $|S(v)|\le c'|v|$ pointwise. Define the functions 
        \begin{align}
            f_\pm(s) = \pm\textstyle\frac12\langle Lv_\pm,v_\pm\rangle_{L^2(\{s\}\times S^1)}
        \end{align}
        for $s\in[R_-,R_+]$ and note that applying $\Pi_\pm$ to \eqref{cr-local} and applying $\langle\cdot,Lv_\pm\rangle_{L^2(\{s\}\times S^1)}$ gives
        \begin{align}\label{coupled-1}
            f_+'(s) + \|Lv_+\|^2&\le\hat\varepsilon(s)\left(\|Lv_+\|^2 + \|Lv_-\|^2\right)\\
            \label{coupled-2}
            -f_-'(s) + \|Lv_-\|^2&\le\hat\varepsilon(s)\left(\|Lv_+\|^2 + \|Lv_-\|^2\right)
        \end{align}
        where $\hat\varepsilon(s) = c'A\eta(e^{\frac12(s-R_+)} + e^{\frac12(R_--s)})\le\frac13$ provided $\eta$ is chosen small enough. Taking $\gamma(s) = \frac{\hat\varepsilon(s)}{1-\hat\varepsilon(s)}$, and adding \eqref{coupled-1} to $\gamma(s)$ times \eqref{coupled-2} gives
        \begin{align}
            f'_+ - \gamma f'_- + (1-\gamma)\|Lu_+\|^2\le 0
        \end{align}
        Now, noting that $\|Lu_+\|^2\ge 2f_+$ and that $|\gamma'|\le 2\gamma(1-\gamma)$ for $s\in[R_-,R_+]$, we get
        \begin{align}
            (f_+-\gamma f_-)' + 2(1-\gamma)(f_+-\gamma f_-)\le 0.
        \end{align}
        Observe that $\int_{R_-}^{R_+}\gamma(s)\,ds \le c''$ for a constant $c''$ independent of $R_\pm$ and so, we get
        \begin{align}
            (f_+ - \gamma f_-)(s) &\le e^{2(c''+R_--s)}(f_+-\gamma f_-)(R_-)\\
            (f_- - \gamma f_+)(s) &\le e^{2(c''+s-R_+)}(f_- - \gamma f_+)(R_+)
        \end{align}
        by integrating the previous estimate. Rearranging the last two estimates, we get
        \begin{align}
            f_\pm(s)\le \frac{e^{2c''}}{1-\gamma(s)^2}\left(e^{\pm 2(R_\mp-s)}f_\pm(R_\mp) + \gamma(s)e^{\pm 2(s-R_\pm)}f_\mp(R_\pm)\right).
        \end{align}
        Noting that $2f_\pm(R_\mp)\le\|Lv_\pm\|^2(R_\mp)\le2\pi\cdot(2A\|\nabla v\|_{L^2})^2$ by \eqref{initial-decay-local}, we get
        \begin{align}
            \|v_+\|^2 + \|v_-\|^2&\le 2f_+ + 2f_-\\
            &\le \frac{e^{2c''}}{1-\gamma(s)}\cdot 8\pi A^2\|\nabla v\|^2_{L^2}\cdot\left(e^{2(s-R_+)} + e^{2(R_--s)}\right)\\
            \label{v-bound-1}
            &\le A'^2\|\nabla v\|^2_{L^2}\cdot\left(e^{2(s-R_+)} + e^{2(R_--s)}\right)
        \end{align}
        with $A'$ denoting an absolute constant which may become bigger with subsequent appearances. Next, applying $\Pi_0$ to \eqref{cr-local} gives $v_0'(s) = \Pi_0(S(v)Lv)$, which implies
        \begin{align}
            |v_0'(s)|&\le c'|v|\cdot\|Lv\|\\
            &\le 2\pi c'A^2\eta\|\nabla v\|_{L^2}\left(e^{\frac12(s-R_+)} + e^{\frac12(R_--s)}\right)^2\\
            &\le A'\|\nabla v\|_{L^2}\left(e^{s-R_+} + e^{R_--s}\right)
        \end{align}
        and integrating this (recalling that $\langle v\rangle = 0$), we get the estimate
        \begin{align}\label{v-bound-2}
            |v_0(s)|^2\le A'^2\|\nabla v\|^2_{L^2}\cdot\left(e^{2(s-R_+)} + e^{2(R_--s)}\right).
        \end{align}
        Now combining \eqref{v-bound-1} and \eqref{v-bound-2} we find upper bounds for $\int_{[s-\frac14,s+\frac14]\times S^1}|v|^2\,ds\,dt$ for $s\in[R_-+\frac12,R_+-\frac12]$ and applying Lemma \ref{gs} gives the desired bound for $|(\nabla v)(s,t)|$ with a suitable absolute constant $H$ as in the statement.
    \end{proof}
	\end{section}
	\begin{section}{Hausdorff distance}\label{hdorff-appx}
	In this appendix, we recollect some properties of Hausdorff distances. Fix a complete metric space $(Z,d)$.
	\begin{definition}\label{hdorff-defn}
	    The \textbf{\emph{Hausdorff distance}} pseudo-metric $d_H$ on the power set $2^Z$ is defined as
	    \begin{align}\label{hdorff-distance-defined}
	        d_H(A,B) = \max\left\{\sup_{x\in A}\inf_{y\in B} d(x,y),\sup_{y\in B}\inf_{x\in A}d(x,y)\right\}
	    \end{align}
	    for any two subsets $A,B\subset Z$. Define $\cK(Z)$ to the space of compact subsets of $Z$ endowed with $d_H$. On the subset $\cK(Z)\subset 2^Z$, $d_H$ is a genuine metric.
	\end{definition}
	\begin{definition}\label{net-defined}
	    Given a constant $\gamma>0$ and a subset $Y\subset Z$, we call $Y$ an \textbf{\emph{$\gamma$-net for}} $Z$ if and only if
	    \begin{align}
	        d_H(Y,Z)<\gamma.
	    \end{align}
	    Define $\nu(Z,\gamma)$ to be the minimum possible cardinality $|Y|$ of a $\gamma$-net $Y\subset Z$. The metric space $Z$ is said to be totally bounded if $\nu(Z,\gamma)$ is finite for every choice of $\gamma > 0$.
	\end{definition}
	\begin{remark}\label{comp-tot-bd-cpt}
	    Recall that a metric space is compact if and only if it is complete and totally bounded. 
	\end{remark}
	\begin{lemma}
	    Suppose $\{Z_i\subset Z\}_{i\in I}$ is a cover of $Z$ by finitely many subsets. Then, for any $\gamma>0$, we have
	    \begin{align}
	        \nu(Z,\gamma)\le\sum_{i\in I}\nu(Z_i,\gamma)
	    \end{align}
	\end{lemma}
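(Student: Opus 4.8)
The plan is to build an explicit $\gamma$-net for $Z$ by gluing together optimal $\gamma$-nets of the pieces $Z_i$. First, if some $\nu(Z_i,\gamma)=\infty$ the asserted inequality is vacuous, so assume all of them are finite and, for each $i\in I$, fix a $\gamma$-net $Y_i\subset Z_i$ with $|Y_i|=\nu(Z_i,\gamma)$ (such a net exists by the definition of $\nu$, Definition \ref{net-defined}). Set $Y=\bigcup_{i\in I}Y_i$, which is a subset of $Z$ since each $Y_i\subset Z_i\subset Z$. Then $|Y|\le\sum_{i\in I}|Y_i|=\sum_{i\in I}\nu(Z_i,\gamma)$, so the proof reduces to checking that $Y$ is a $\gamma$-net for $Z$, i.e.\ $d_H(Y,Z)<\gamma$.

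Unwinding \eqref{hdorff-distance-defined}, this is two inequalities. The inequality $\sup_{y\in Y}\inf_{x\in Z}d(x,y)<\gamma$ is immediate: each $y\in Y$ already lies in $Z$, so the inner infimum is $0$. For the other inequality, $\sup_{x\in Z}\inf_{y\in Y}d(x,y)<\gamma$, I would use that $\{Z_i\}$ covers $Z$: given $x\in Z$, pick $i$ with $x\in Z_i$, and then $\inf_{y\in Y}d(x,y)\le\inf_{y\in Y_i}d(x,y)\le\sup_{x'\in Z_i}\inf_{y\in Y_i}d(x',y)$, which is $<\gamma$ because $Y_i$ is a $\gamma$-net for $Z_i$. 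Since $I$ is finite, the maximum over $i\in I$ of the quantities $\sup_{x'\in Z_i}\inf_{y\in Y_i}d(x',y)$ is some $\gamma'<\gamma$, and hence $\sup_{x\in Z}\inf_{y\in Y}d(x,y)\le\gamma'<\gamma$.

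Combining the two displayed bounds gives $d_H(Y,Z)<\gamma$, so $Y$ is a $\gamma$-net and $\nu(Z,\gamma)\le|Y|\le\sum_{i\in I}\nu(Z_i,\gamma)$. The statement is really just a bookkeeping consequence of the definitions, so there is no genuine obstacle; the one point that requires a little care is the strict inequality in the definition of a net, which is where finiteness of $I$ is used — passing from ``each term is $<\gamma$'' to ``the maximum is $<\gamma$'' fails for infinite covers.
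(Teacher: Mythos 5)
Your proposal is correct and is essentially the paper's proof: take a $\gamma$-net for each $Z_i$ and observe that their union is a $\gamma$-net for $Z$. The paper states this in one line; you simply spell out the verification (including the correct observation that finiteness of $I$ is what lets the strict inequalities survive the supremum).
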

	\begin{proof}
	    Given a $\gamma$-net $Y_i$ for each $Z_i$, we define $Y = \bigcup_i Y_i$. Then, $Y$ is a $\gamma$-net for $Z = \bigcup_i Z_i$.
	\end{proof}
	\begin{lemma}\label{unif-compact}
	    Let $\gamma>0$ and $K\subset Z$. Then, given any $\gamma$-net $Y\subset Z$, there exists a subset $Y'\subset Y$ and a $2\gamma$-net $K'$ for $K$ such that
	    \begin{align}
	        |K'|&\le |Y'| \\
	        d_H(K,Y')&<\gamma
	    \end{align}
	\end{lemma}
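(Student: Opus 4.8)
The plan is to take $Y'$ to consist of exactly those points of $Y$ lying close to $K$, and then to obtain $K'$ by choosing one nearby point of $K$ for each point of $Y'$. Write $d(y,K) := \inf_{z \in K} d(z,y)$. The only point requiring any real care is keeping track of strict versus non-strict inequalities in the definition of the Hausdorff distance, which I will arrange by working with an auxiliary radius $\gamma'$.

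Concretely, first I would set $\gamma_0 := d_H(Y,Z)$, which satisfies $\gamma_0 < \gamma$ since $Y$ is a $\gamma$-net, and fix any $\gamma'$ with $\gamma_0 < \gamma' < \gamma$. Assuming $K \neq \varnothing$ (the empty case being vacuous), I would then define $Y' := \{\, y \in Y : d(y,K) < \gamma' \,\} \subset Y$. The bound $d_H(K,Y') < \gamma$ is then checked by examining the two halves of the Hausdorff supremum separately: every $y \in Y'$ lies within $\gamma'$ of $K$ by definition; and for $x \in K$, the $\gamma$-net property of $Y$ gives $\inf_{y \in Y} d(x,y) \le \gamma_0 < \gamma'$, so there is a $y \in Y$ with $d(x,y) < \gamma'$, and this $y$ automatically belongs to $Y'$, so $x$ lies within $\gamma'$ of $Y'$. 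Hence $d_H(K,Y') \le \gamma' < \gamma$.

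Next I would build $K'$: for each $y \in Y'$ choose $x_y \in K$ with $d(x_y, y) < \gamma$ (possible since $d(y,K) < \gamma' < \gamma$), and set $K' := \{\, x_y : y \in Y' \,\} \subset K$, so that $|K'| \le |Y'|$ holds by construction. To see that $K'$ is a $2\gamma$-net for $K$, the inclusion $K' \subset K$ disposes of one half of $d_H(K',K)$, and for the other half, given $x \in K$ I pick $y \in Y'$ with $d(x,y) < \gamma'$ exactly as in the previous step; then $d(x, x_y) \le d(x,y) + d(y, x_y) < \gamma' + \gamma < 2\gamma$, whence $d_H(K',K) \le \gamma' + \gamma < 2\gamma$.

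The main — and essentially only — obstacle is the bookkeeping just described: making sure each estimate lands strictly below the stated threshold, which is precisely why $\gamma'$ is taken strictly between $d_H(Y,Z)$ and $\gamma$ rather than equal to $\gamma$. Everything else is a mechanical unwinding of the definitions of the Hausdorff distance and of a net.
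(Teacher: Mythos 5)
Your proof is correct and follows essentially the same route as the paper's: choose an auxiliary radius $\gamma'$ strictly between $d_H(Y,Z)$ and $\gamma$, let $Y'$ be the points of $Y$ within $\gamma'$ of $K$, and take $K'$ to consist of one nearby point of $K$ for each element of $Y'$. The only cosmetic difference is that the paper selects $x(y)$ with $d(y,x(y))<\gamma'$ rather than $<\gamma$, which changes nothing.
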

	\begin{proof}
	    Choose $d_H(Y,Z)<\gamma'<\gamma$. Define $Y'\subset Y$ to be the subset of points $y\in Y$ such that $d(y,K)<\gamma'$. For each $y\in Y'$, choose a point $x(y)\in K$ such that $d(y,x(y))<\gamma'$. Define $K' = \{x(y)\,|\,y\in Y'\}$. It is now immediate that $K'\subset K$ is a $2\gamma$-net and that $|K'|\le|Y'|$. Moreover, we have $d_H(K,Y')\le\gamma'<\gamma$.
	\end{proof}
	\begin{corollary}\label{K-net}
	    For any $\gamma>0$, we have the estimate
	    \begin{align}
	        \nu(\cK(Z),\gamma)\le 2^{\nu(Z,\gamma)}
	    \end{align}
	\end{corollary}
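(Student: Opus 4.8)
The plan is to use the already-established Lemma \ref{unif-compact} almost verbatim, the only additional input being a counting observation. The guiding idea: if $Y\subset Z$ is a $\gamma$-net of minimal size, then the collection of \emph{all} subsets of $Y$ should itself be a $\gamma$-net for $\cK(Z)$. Each subset of $Y$ is finite, hence compact, hence an element of $\cK(Z)$, and there are exactly $2^{|Y|}=2^{\nu(Z,\gamma)}$ of them, so once we know this collection is a $\gamma$-net the bound $\nu(\cK(Z),\gamma)\le 2^{\nu(Z,\gamma)}$ follows immediately.

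So the steps I would carry out are: first, dispose of the trivial case $\nu(Z,\gamma)=\infty$ (nothing to prove). Second, fix a $\gamma$-net $Y\subset Z$ with $|Y|=\nu(Z,\gamma)$ and let $\mathcal Y$ be the set of all subsets of $Y$, noting $\mathcal Y\subset\cK(Z)$ and $|\mathcal Y|=2^{\nu(Z,\gamma)}$. Third, given an arbitrary $K\in\cK(Z)$, apply Lemma \ref{unif-compact} with this $\gamma$-net $Y$ and this set $K$ to obtain a subset $Y'\subset Y$ (so $Y'\in\mathcal Y$) with $d_H(K,Y')<\gamma$. This says precisely that $\mathcal Y$ is a $\gamma$-net for $\cK(Z)$, whence $\nu(\cK(Z),\gamma)\le|\mathcal Y|=2^{\nu(Z,\gamma)}$.

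There is essentially no obstacle here, since all the work is hidden in Lemma \ref{unif-compact}; the only points requiring a word of care are bookkeeping. One should confirm that finite subsets of $Z$ genuinely lie in $\cK(Z)$ (they are compact), and, if one uses the convention that $\varnothing\notin\cK(Z)$, simply discard $\varnothing$ from $\mathcal Y$ — this only lowers the cardinality, so the inequality is unaffected (and the $Y'$ produced by Lemma \ref{unif-compact} is automatically nonempty when $K\neq\varnothing$, since every point of $K$ lies within $\gamma$ of $Y$). The ``main obstacle'', such as it is, is just recognizing that Lemma \ref{unif-compact} is exactly the statement ``every compact set is Hausdorff-$\gamma$-close to some subset of the fixed net'', which is the crux of the corollary.
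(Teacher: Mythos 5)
Your proposal is correct and is essentially identical to the paper's proof, which likewise observes that Lemma \ref{unif-compact} shows the power set $2^Y$ of a $\gamma$-net $Y$ for $Z$ is a $\gamma$-net for $\cK(Z)$. The extra bookkeeping you note (finite sets are compact, discarding $\varnothing$) is harmless and does not change the argument.
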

	\begin{proof}
	    Lemma \ref{unif-compact} shows that if $Y$ is a $\gamma$-net for $Z$, then the power set $2^Y$ is a $\gamma$-net for $\cK(Z)$.
	\end{proof}
	\begin{corollary}\label{unif-compact-final}
	    Let $\gamma>0$ and $K\subset Z$. Then, we have
	    \begin{align}
	        \nu(K,2\gamma)\le\nu(Z,\gamma)
	    \end{align}
	\end{corollary}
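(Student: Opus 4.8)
The plan is to derive this directly from Lemma \ref{unif-compact}, of which it is essentially a restatement in terms of the counting function $\nu$. If no finite $\gamma$-net for $Z$ exists, then $\nu(Z,\gamma)=\infty$ and the inequality holds trivially, so I may assume $\nu(Z,\gamma)$ is finite and fix a $\gamma$-net $Y\subset Z$ realizing it, i.e. $|Y|=\nu(Z,\gamma)$.

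Next I would apply Lemma \ref{unif-compact} to this choice of $Y$ and to the given subset $K\subset Z$ (no compactness of $K$ is needed, as that lemma applies to an arbitrary subset). It furnishes a subset $Y'\subset Y$ and a $2\gamma$-net $K'$ for $K$ with $|K'|\le|Y'|$. Chaining $|K'|\le|Y'|\le|Y|=\nu(Z,\gamma)$ then exhibits a $2\gamma$-net for $K$ of cardinality at most $\nu(Z,\gamma)$, which is exactly the assertion $\nu(K,2\gamma)\le\nu(Z,\gamma)$.

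I do not anticipate any obstacle: all of the content sits inside Lemma \ref{unif-compact} (where one selects, among the net points of $Z$, those lying within $\gamma$ of $K$ and pushes each one onto a nearby point of $K$, paying only a factor of $2$ in the radius), and the corollary is merely the observation that running this procedure on a \emph{minimal} $\gamma$-net of $Z$ costs nothing in cardinality since the output $Y'$ is a subset of $Y$. The only points worth stating explicitly are the trivial infinite case and the fact that Lemma \ref{unif-compact} imposes no closedness or compactness hypothesis on $K$.
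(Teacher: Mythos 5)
Your proposal is correct and is exactly the paper's argument: the paper's proof of this corollary is the one-line observation that $|K'|\le|Y'|\le|Y|$ in Lemma \ref{unif-compact}, applied to a minimal $\gamma$-net $Y$ for $Z$. Your explicit handling of the infinite case and the remark that no compactness of $K$ is needed are fine but add nothing beyond what the paper's chain of inequalities already gives.
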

	\begin{proof}
	    We simply note that $|K'|\le|Y'|\le|Y|$ in Lemma \ref{unif-compact}.
	\end{proof}
	\begin{lemma}\label{hdorff-metric-complete}
	    $d_H$ is a complete metric on $\cK(Z)$.
	\end{lemma}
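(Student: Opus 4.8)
The plan is to run the classical argument. Given a Cauchy sequence $(K_n)$ in $(\cK(Z),d_H)$, which we may assume consists of nonempty compact sets (otherwise it is eventually constantly $\varnothing$), I would identify the candidate limit
\[
  K := \left\{x\in Z \;:\; \text{there exist } x_n\in K_n \text{ with } x_n\to x\right\},
\]
and show that $K\in\cK(Z)$ and $d_H(K_n,K)\to 0$. The metric axioms for $d_H$ on $\cK(Z)$ are already recorded in Definition \ref{hdorff-defn}, so only completeness is at issue. The one technical device, which I would isolate at the outset, is a \emph{telescoping construction}: given $\varepsilon>0$, use the Cauchy property to choose indices $m_0<m_1<\cdots$ with $d_H(K_{m_k},K_j)<2^{-k-1}\varepsilon$ for all $j\ge m_k$, and, starting from any $x\in K_{m_0}$, inductively pick $z_k\in K_{m_k}$ with $z_0=x$ and $d(z_k,z_{k+1})<2^{-k-1}\varepsilon$ (possible from the definition of $d_H$). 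Then $(z_k)$ is Cauchy in $Z$, so it converges to some $z$ with $d(x,z)\le\varepsilon$ by completeness of $Z$; moreover $z\in K$, since for $m_k\le j<m_{k+1}$ one may choose $w_j\in K_j$ with $d(z_k,w_j)<2^{-k-1}\varepsilon$ and then $w_j\to z$. Applying this with $\varepsilon=1$ and arbitrary $x\in K_{m_0}$ already gives $K\ne\varnothing$.

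Next I would verify $K$ is compact. Closedness is a diagonal argument: if $y^{(j)}\in K$ and $y^{(j)}\to y$, then for each $j$ there is an increasing index $N_j$ such that for all $n\ge N_j$ the set $K_n$ contains a point within $\tfrac1j$ of $y^{(j)}$, hence within $\tfrac1j+d(y^{(j)},y)$ of $y$; choosing such a point for $n\in[N_j,N_{j+1})$ and an arbitrary point of $K_n$ for $n<N_1$ produces $x_n\in K_n$ with $x_n\to y$, so $y\in K$. For total boundedness, fix $\varepsilon>0$, pick $n$ with $d_H(K_m,K_n)<\varepsilon/3$ for all $m\ge n$, and cover the compact set $K_n$ by finitely many $\varepsilon/3$-balls about $p_1,\dots,p_r\in K_n$; any $x=\lim x_m\in K$ has, for large $m$, a point of $K_n$ within $\varepsilon/3$ of $x_m$ and hence some $p_i$ within $2\varepsilon/3$ of $x_m$, so $x$ lies within $2\varepsilon/3<\varepsilon$ of some $p_i$. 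Thus $K$ is totally bounded and closed in the complete space $Z$, hence compact, i.e. $K\in\cK(Z)$.

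Finally I would show $d_H(K_n,K)\to 0$. Fix $\varepsilon>0$ and $N$ with $d_H(K_m,K_j)<\varepsilon/2$ for all $m,j\ge N$, and let $n\ge N$. Running the telescoping construction with $m_0=n$ sends each $x\in K_n$ to some $z\in K$ with $d(x,z)\le\varepsilon$, so $\sup_{x\in K_n}d(x,K)\le\varepsilon$. Conversely, if $y\in K$ with $y=\lim x_m$, $x_m\in K_m$, choose $m\ge N$ with $d(x_m,y)<\varepsilon/2$ and then $x'\in K_n$ with $d(x_m,x')<\varepsilon/2$, so $d(y,K_n)<\varepsilon$; thus $\sup_{y\in K}d(y,K_n)\le\varepsilon$. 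Hence $d_H(K_n,K)\le\varepsilon$ for all $n\ge N$, and since $\varepsilon$ was arbitrary, $K_n\to K$. The crux of the argument is correctly guessing the limit set $K$ together with the telescoping device; everything else is routine bookkeeping with the triangle inequality and diagonalization, so I do not expect any serious obstacle.
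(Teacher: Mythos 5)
Your proof is correct, but it follows a different route from the one in the paper. You construct the limit directly as the set $K$ of all limits of selections $x_n\in K_n$ (the Kuratowski lower limit), verify compactness of $K$ by hand, and prove convergence via the telescoping selection $z_k\in K_{m_k}$ with $d(z_k,z_{k+1})<2^{-k-1}\varepsilon$; every step you sketch goes through, and the argument is self-contained. The paper instead first shows that $\hat K=\bigcup_i K_i$ is totally bounded, replaces $Z$ by the (compact) closure of $\hat K$, and then extracts a convergent subsequence: it uses Corollary \ref{unif-compact-final} to produce $\tfrac1n$-nets $\{x_{i,j}\}_{j\le N_n}\subset K_i$ of size uniform in $i$, diagonalizes so that $x_{i,j}\to x_j$ for each $j$, and takes $K_\infty$ to be the closure of $\{x_j\}$; since a Cauchy sequence with a convergent subsequence converges, this suffices. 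What your approach buys is independence from the net machinery of Appendix \ref{hdorff-appx} and an explicit, quantitative verification of $d_H(K_n,K)\le\varepsilon$ (which the paper leaves as ``it easily follows''); what the paper's approach buys is brevity by reusing Corollary \ref{unif-compact-final} and Remark \ref{comp-tot-bd-cpt}, which are needed elsewhere anyway. The only cosmetic caveat in your write-up is the treatment of the empty set: with the convention \eqref{hdorff-distance-defined}, $d_H(\varnothing,A)=\infty$ for nonempty $A$, so your reduction to nonempty terms is harmless but worth stating in exactly those terms.
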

	\begin{proof}
	    Let $\{K_i\}_{i\ge 1}$ is a $d_H$-Cauchy sequence of compact subsets of $Z$ for which wish to construct a $d_H$-limit $K_\infty$. First, notice that the union $\hat K = \bigcup K_i$ is totally bounded. Indeed, given any $\gamma>0$, first choose $N = N(\gamma)$ such that $d_H(K_i,K_j)\le\frac12\gamma$ for $i,j\ge N$. For each $1\le i\le N$, let $Y_i$ be a finite $\frac12\gamma$-net for $K_i$. Then, $Y_1\cup\cdots\cup Y_N$ is a $\gamma$-net for $K$. Thus, replacing $Z$ by the closure of $\hat K$ (which is complete and totally bounded and thus, compact by Remark \ref{comp-tot-bd-cpt}), we may safely assume that $Z$ is compact.\\
	    \indent We can now ignore the fact that $\{K_i\}_{i\ge 1}$ is Cauchy and simply produce a convergent subsequence. Using Corollary \ref{unif-compact-final}, we see that there exists an increasing sequence of positive integers $\{N_n\}_{n\ge 1}$ and sequences $\{x_{i,j}\}_{j\ge 1}\subset K_i$ for each $i\ge 1$ with the following significance: for each integer $n\ge 1$, the collection $\{x_{i,j}\}_{j\le N_n}\subset K_i$ forms a $\frac1n$-net. By a diagonal argument, we can now pass to a subsequence, still denoted $\{K_i\}$, and construct a collection $\{x_j\}_{j\ge 1}\subset Z$ such that $x_j = \lim_{i\to\infty}x_{i,j}$ for each $i\ge 1$. Defining $K_\infty\subset Z$ to be the closure of $\{x_j\}_{j\ge 1}$, it easily follows that $\lim_{i\to\infty}d_H(K_i,K_\infty) = 0$.
	\end{proof}
	\begin{definition}
	    Suppose $T,Z,W$ are complete metric spaces and $\fX\subset T\times Z$ is closed subset, proper over $T$. For any $t\in T$, define $\fX_t\subset Z$ to be the inverse image of $\fX$ under the map $z\mapsto(t,z)$. Given a number $\Lambda>0$, define $\cF_\Lambda(\fX/T,W)$ to be the set of pairs $(t,f:\fX_t\to W)$ with $t\in T$ and $f$ being a continuous map with Lipschitz constant $\le\Lambda$. Define a metric on $\cF_\Lambda(\fX/T,W)$ by setting
	    \begin{align}
	        d((t,f),(t'f')) = \max\{d(t,t'), d_H(\Gamma_f,\Gamma_{f'})\}
	    \end{align}
	    where $\Gamma_f$ (resp. $\Gamma_{f'}$) is the graph of $f$ (resp. $f'$), viewed as a subset $Z\times W$. Here, the metric we use on $Z\times W$ is $d((z,w),(z',w')) = \max\{d(z,z'),d(w,w')\}$.
	\end{definition}
	\begin{lemma}\label{map-space-net}
	    If $\Lambda\ge 1$, then given any $\delta>0$, we can cover $\cF_\Lambda(\fX/T,W)$ by a collection $\{\cF_i\}_{i\in I_\delta}$ of subsets satisfying the conditions 
	    \begin{align}
	        \sup_{i\in I_\delta}\normalfont\text{diam}(\cF_i)&<4\delta \\
	        |I_\delta| = \nu(T,\delta)\cdot(1 &+ \nu(W,\delta))^{\nu(Z,\delta/\Lambda)}
	    \end{align}
	    and thus, for any subset $\cF\subset\cF_\Lambda(\fX/T,W)$, we have
	    \begin{align}
	        \nu(\cF,4\delta)\le |I_\delta|.
	    \end{align}
	\end{lemma}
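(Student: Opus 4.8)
The plan is to prove Lemma~\ref{map-space-net} by constructing the cover $\{\cF_i\}$ explicitly from fixed nets, exactly mirroring the arguments already used in the proofs of Theorem~\ref{holo-map-tot-bound} and in part (ii) of Theorem~\ref{intro-theorem}, and then bounding the diameter of each piece by a four-point triangle-inequality estimate. First I would fix a $\delta$-net $S\subset T$, a $\delta$-net $P\subset W$, and a $(\delta/\Lambda)$-net $Q\subset Z$; by shrinking slightly, choose $0<\gamma<\delta$ so that $S,P,Q$ remain nets with $\delta$ replaced by $\gamma$. Put $P^+ = P\sqcup\{*\}$ where $*$ is a formal symbol. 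For each pair $(s,h)$ with $s\in S$ and $h:Q\to P^+$ a set map, define $\cF_{(s,h)}$ to consist of those $(t,f)\in\cF_\Lambda(\fX/T,W)$ with $d(t,s)\le\gamma$ and such that for every $q\in Q$: if $h(q) = *$ then $d(q,\fX_t) > \gamma/\Lambda$ in $Z$, while if $h(q) = p\in P$ then there is a point $q_t\in\fX_t$ with $d(q,q_t)\le\gamma/\Lambda$ and $d(f(q_t),p)\le\gamma$. The index set is $I_\delta = S\times (P^+)^Q$, so $|I_\delta| = |S|\cdot(1+|P|)^{|Q|}\le \nu(T,\delta)\cdot(1+\nu(W,\delta))^{\nu(Z,\delta/\Lambda)}$, giving the claimed bound.

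The covering property is the routine direction: given any $(t,f)$, pick $s\in S$ with $d(t,s)\le\gamma$ (possible since $S$ is a $\gamma$-net), and define $h(q)$ for each $q\in Q$ by setting $h(q) = *$ if $d(q,\fX_t)>\gamma/\Lambda$, and otherwise choosing $q_t\in\fX_t$ with $d(q,q_t)\le\gamma/\Lambda$ and then $h(q)\in P$ with $d(f(q_t),h(q))\le\gamma$ (possible since $P$ is a $\gamma$-net for $W$). Then $(t,f)\in\cF_{(s,h)}$ by construction, so the $\cF_{(s,h)}$ indeed cover $\cF_\Lambda(\fX/T,W)$.

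The diameter estimate is where the real (if still short) work lies, and is the step I expect to be the main obstacle, since it requires carefully chaining through the Lipschitz hypothesis. Take $(t_1,f_1),(t_2,f_2)\in\cF_{(s,h)}$. The $T$-coordinates satisfy $d(t_1,t_2)\le 2\gamma<4\delta$ immediately. For the Hausdorff-distance part, given a point $(z_1,f_1(z_1))\in\Gamma_{f_1}$, use that $Q$ is a $(\gamma/\Lambda)$-net for $Z$ to find $q\in Q$ with $d(z_1,q)\le\gamma/\Lambda$; since $z_1\in\fX_{t_1}$, we have $d(q,\fX_{t_1})\le\gamma/\Lambda$, so by definition of $\cF_{(s,h)}$ we cannot have $h(q)=*$, hence $h(q)=p\in P$ and there is $(q)_{t_2}\in\fX_{t_2}$ with $d(q,(q)_{t_2})\le\gamma/\Lambda$ and $d(f_2((q)_{t_2}),p)\le\gamma$. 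There is also $(q)_{t_1}\in\fX_{t_1}$ with the analogous bounds against the \emph{same} $p$. Then $d(z_1,(q)_{t_1})\le d(z_1,q)+d(q,(q)_{t_1})\le 2\gamma/\Lambda$, so the Lipschitz bound (with constant $\Lambda$) gives $d(f_1(z_1),f_1((q)_{t_1}))\le 2\gamma$; combining with $d(f_1((q)_{t_1}),p)\le\gamma$, $d(p,f_2((q)_{t_2}))\le\gamma$, and $d(q,(q)_{t_2})\le\gamma/\Lambda$ shows the point $((q)_{t_2},f_2((q)_{t_2}))\in\Gamma_{f_2}$ lies within $\max\{2\gamma/\Lambda + \gamma/\Lambda,\,2\gamma+\gamma+\gamma\} < 4\gamma$ of $(z_1,f_1(z_1))$ in the product metric (here $\Lambda\ge 1$ makes the $Z$-component harmless). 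By symmetry $d_H(\Gamma_{f_1},\Gamma_{f_2})<4\gamma<4\delta$, so $\text{diam}(\cF_{(s,h)})<4\delta$. The final inequality $\nu(\cF,4\delta)\le|I_\delta|$ for $\cF\subset\cF_\Lambda(\fX/T,W)$ then follows since the sets $\cF_i\cap\cF$, each of diameter $<4\delta$, cover $\cF$, and picking one point from each nonempty such set yields a $4\delta$-net of size $\le|I_\delta|$.
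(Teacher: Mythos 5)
Your proposal is correct and follows essentially the same route as the paper's own proof: the same nets $S,P,Q$ (the paper's $A,C,B$), the same coding map $h:Q\to P\sqcup\{*\}$ indexing the pieces, and the same four-step triangle-inequality/Lipschitz chain for the diameter bound. (The only quibble is that $2\gamma+\gamma+\gamma=4\gamma$ gives ``$\le 4\gamma$'' rather than ``$<4\gamma$'', but since $\gamma<\delta$ this still yields $\mathrm{diam}<4\delta$ as required.)
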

	\begin{proof}
	    Let $A\subset T$, $C\subset W$ be $\delta$-nets and let $B\subset Z$ be a $(\delta/\Lambda)$-net. Choose $0<\gamma<\delta$ such that the previous sentence remains true when we replace $\delta$ by $\gamma$. Define $D = C\sqcup\{*\}$. Now, given any pair $(a,h)\in A\times D^B$ consisting of an element $a\in A$ and a set map $h:B\to D$, define the subset
	    \begin{align}
	        \cF_{(a,h)}\subset\cF_\Lambda(\fX/T,W)
	    \end{align}
	    as follows. It consists of elements $(t,f:\fX_t\to W)$ such that $d(t,a)\le\gamma$ and such that for each $b\in B$,
	    \begin{enumerate}[(i)]
	        \item if $h(b) = *$, then we have $d(b,\fX_t) > \gamma/\Lambda$ and,
	        \item if $h(b) = c\in C$, then there exists a point $b_t\in\fX_t$ such that $d(b,b_t)\le\gamma/\Lambda$ and $d(f(b_t),c)\le\gamma$.
	    \end{enumerate}
	    Since $A,B,C$ are $\gamma$,$(\gamma/\Lambda)$, $\gamma$-nets in $T,Z,W$ (resp.), it follows that the sets $\cF_{(a,h)}$ ranging over all possible $(a,h)$ cover the space $\cF_\Lambda(\fX/T,W)$. We will therefore be done if we estimate the diameter of each of these sets. Indeed, let $(t,f)$ and $(s,g)$ both lie in $\cF_{(a,h)}$. We note that
	    \begin{align}
	        d(t,s) \le d(t,a) + d(a,s) \le 2\gamma
	    \end{align}
	    and thus, we are left to show that $d_H(
	    \Gamma_f,\Gamma_g)\le 4\gamma$. Since the situation is symmetric, it will be enough to show that for any $x\in\fX_t$, we can find $y\in\fX_s$ such that $d(x,y)\le 4\gamma$ and $d(f(x),g(y))\le 4\gamma$. Starting with $x\in\fX_t$, first choose $b\in B$ such that $d(x,b)\le\gamma/\Lambda$. This is possible since $B\subset Z$ is a $(\gamma/\Lambda)$-net. It follows from (i) that $c = h(b)\in C$ and that we can find $b_t\in\fX_t$ as in (ii). Similarly, we can use (ii) to find $y\in\fX_s$ such that $d(b,y)\le\gamma/\Lambda$ and $d(g(y),c)\le\gamma$. Now, observe that
	    \begin{align}
	        d(x,y)&\le d(x,b) + d(b,y) \le 2\gamma/\Lambda\le 2\gamma \\
	        d(f(x),g(y))&\le d(f(x),c) + d(c,g(y)) \\
	        &\le d(f(x),f(b_t)) + d(f(b_t),c) + 
	        \gamma\\
	        &\le \Lambda\cdot d(x,b_t) + 2\gamma \\
	        &\le \Lambda\cdot(d(x,b) + d(b,b_t)) + 2\gamma \le 4\gamma
	    \end{align}
	    and this completes the proof.
	\end{proof}
	\end{section}
	\begin{section}{Trees and curves}\label{trees-appx}
	In this appendix, we establish the notation and some basic properties of (stable, marked) rooted trees. 
	{We also supply the proofs of Lemmas from \textsection\ref{families}.}
	\subsection{Definitions and basic properties}\label{tree-basics}
	\begin{definition}\label{graph-def}
	    A \emph{\textbf{graph}} (\emph{with half edges}) is a triple $(V,E,\partial)$ where $V$ is a finite set (with elements called \emph{\textbf{vertices}}), $E$ is a finite set (with elements called \emph{\textbf{edges}}) and a \emph{\textbf{boundary map}} $\partial:E\to 2^V$ such that for all $e\in E$, we have $1\le|\partial(e)|\le 2$. The elements of $\partial(e)$, for $e\in E$, are called the \emph{\textbf{endpoints}} of $e$. An edge $e\in E$ is called a \emph{\textbf{half}} or \emph{\textbf{full}} edge when it has $1$ or $2$ endpoints respectively. For a vertex $v\in V$, we define its \emph{\textbf{degree}} by {$\deg(v)$} $=|\{e\in E\,|\,v\in\partial(e)\}|$.
	\end{definition}
	\begin{definition}
	    Given two graphs $G = (V,E,\partial)$ and $G' = (V',E',\partial')$, an \emph{\textbf{isomorphism}} $\varphi:G\to G'$ is a pair of bijections $\varphi_V:V\to V'$ and $\varphi_E:E\to E'$ such that $2^{\varphi_V}\circ\partial = \partial'\circ\varphi_E$.
	\end{definition}
	\begin{definition}\label{edge-orient-def}
	    Given a graph $G = (V,E,\partial)$ and an edge $e\in E$, an \emph{\textbf{orientation on}} $e$ is an injective map $\fo_e:\partial(e)\to\{-1,+1\}$. An endpoint $v$ of $e$ is called \emph{\textbf{positive}} (resp. \emph{\textbf{negative}}), written as $v = e^+$ (resp. $v = e^-$), if $\fo_e(v) = +1$ (resp. $-1$). An \emph{\textbf{orientation}} of $G$ is a choice $\fo = \{\fo_e\}_{e\in E}$ of orientations for each the edges of $G$. When $G$ is equipped with an orientation $\fo$, for any vertex $v$, we use $v^+$ (resp. $v^-$) to denote the set of edges $e\in E$ for which $v\in\partial(e)$ and $\fo_e(v) = -1$ (resp. $\fo_e(v) = +1$).
	\end{definition}
	\begin{definition}\label{path-def}
	    A \emph{\textbf{path}} in a graph $G = (V,E,\partial)$ is defined to be a sequence $\cP = (v_1,\ldots,v_n)$ for some $n\ge 1$, where the $v_i$'s are (pairwise distinct) vertices such that for each $1\le i<n$, there exists $e_i\in E$ satisfying $\partial(e_i) = \{v_i,v_{i+1}\}$. In addition, if $G$ is oriented, then we say that $\cP$ is a \emph{\textbf{positive}} path if each $e_i$ can be chosen so that $v_i = e_i^-$.
	\end{definition}
	\begin{definition}\label{tree-def}
	    Given a graph $G=(V,E,\partial)$, we define its \emph{\textbf{geometric realization}} $\|G\|$ to be the $1$-dimensional simplicial complex with $0$-cells given by $V\sqcup E$ and a $1$-cell connecting $v\in V$ and $e\in E$ corresponding to each pair $(v,e)$ for which $v\in\partial(e)$. $G$ is called a \emph{\textbf{tree}} if and only if $\|G\|$ is contractible.
	\end{definition}
	\begin{definition}\label{rooted-tree-def}
	    A pair $(\cT,e_0)$ consisting of a graph $\cT$ and a half edge $e_0$ of $\cT$ is called a \emph{\textbf{rooted}} tree if the $\cT$ is a tree. We omit $e_0$ from the notation when it is clear from context. The unique endpoint $v_0$ of the edge $e_0$ is called the \emph{\textbf{root}} of the rooted tree, while $e_0$ itself is called the \emph{\textbf{root edge}}.
	\end{definition}
	\begin{lemma}[Orientation of a rooted tree]\label{orient-rooted-tree}
	    Any rooted tree $(\cT,e_0)$ has a unique orientation $\fo$ such that the endpoint of $e_0$ is positive and each vertex is the positive endpoint of a unique edge.
	\end{lemma}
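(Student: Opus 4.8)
The plan is to prove Lemma~\ref{orient-rooted-tree} by induction on the number of vertices $|V|$ of the rooted tree $(\cT,e_0)$, peeling off the root vertex $v_0$ and applying the inductive hypothesis to the components of what remains.

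First I would handle existence. Let $v_0$ be the root vertex, i.e.\ the unique endpoint of $e_0$. Orient $e_0$ so that $v_0 = e_0^+$ (there is only one way to do this since $e_0$ is a half edge). Now consider the finitely many full edges $f_1,\ldots,f_d$ incident to $v_0$; orient each so that $v_0 = f_i^-$. Deleting $v_0$ together with $e_0, f_1, \ldots, f_d$ from $\|\cT\|$ leaves a disjoint union of contractible pieces, each of which is the geometric realization of a rooted tree $\cT_i$ whose root edge is (the appropriate remnant of) $f_i$ and whose root vertex is the other endpoint of $f_i$. Formally, one should check that each connected component of $\cT$ after deleting $v_0$ and its incident edges, with the half edge coming from $f_i$ adjoined back, is again a tree — this is immediate from contractibility of $\|\cT\|$ and the fact that removing an open star of a vertex from a contractible $1$-complex yields a disjoint union of contractible pieces. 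By the inductive hypothesis, each $\cT_i$ has a (unique) orientation of the desired form; note that in $\cT_i$ the root edge is oriented with its endpoint positive, which is exactly the orientation $v_0 = f_i^-$ already chose (so there is no conflict). Assembling these orientations together with the orientations of $e_0$ and the $f_i$ gives an orientation $\fo$ of $\cT$. One then checks the two required properties: the endpoint of $e_0$ is positive by construction, and each vertex is the positive endpoint of exactly one edge — for $v_0$ this is $e_0$, and for any other vertex $v$ it follows from the inductive hypothesis applied to whichever $\cT_i$ contains $v$.

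Next I would prove uniqueness. Suppose $\fo$ is any orientation with the stated properties. The condition that $v_0$ is the positive endpoint of a unique edge, combined with the fact that the endpoint of $e_0$ is positive, forces $v_0 = e_0^+$ and forces every other edge incident to $v_0$ — that is, each $f_i$ — to satisfy $v_0 = f_i^-$ (it cannot be the positive endpoint of two distinct edges). Thus $\fo$ restricted to $e_0$ and the $f_i$ is determined. The restriction of $\fo$ to each subtree $\cT_i$ is then an orientation of $\cT_i$ in which the root edge $f_i$ has its endpoint positive and every vertex of $\cT_i$ is the positive endpoint of a unique edge (here one uses that the edges of $\cT$ incident to a vertex $v \neq v_0$ all lie in the same $\cT_i$, so the "unique positive edge" property transfers verbatim). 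By the inductive hypothesis this restriction is unique, and hence $\fo$ is unique.

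The main obstacle, such as it is, is purely bookkeeping: one must be careful that deleting the open star of $v_0$ genuinely produces rooted trees whose root edges are precisely the half-edge remnants of the $f_i$, so that the inductive hypothesis applies cleanly and the orientations glue without conflict at $v_0$. Once the combinatorial structure of this decomposition is set up correctly — which is routine given contractibility of $\|\cT\|$ — both existence and uniqueness follow by a straightforward induction, with the base case $|V| = 1$ being the trivial observation that a single vertex with a single half edge admits exactly one orientation of the required form.
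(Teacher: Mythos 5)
Your proof is correct, but it proceeds by a different route than the paper. The paper gives a direct, global definition: for each edge $e\ne e_0$ it takes the unique path $v_0,\ldots,v_n$ from the root to an endpoint $v_n$ of $e$ with $v_i\notin\partial(e)$ for $i<n$, declares $\fo_e(v_n)=-1$, and leaves well-definedness, the verification of the two properties, and uniqueness to the reader. You instead induct on $|V|$, deleting the open star of the root and applying the inductive hypothesis to the resulting rooted subtrees. Both constructions produce the same orientation (``away from the root''), but your induction makes the verification of the ``unique positive edge'' property and, especially, the uniqueness claim essentially automatic, at the cost of setting up the decomposition; the paper's version avoids the decomposition but defers all checking. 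One small bookkeeping point you should tighten: a vertex, including $v_0$, may carry several half edges besides $e_0$ (in this paper external edges correspond to marked points, so this genuinely occurs). In your inductive step you orient $e_0$ and the \emph{full} edges $f_1,\ldots,f_d$ at $v_0$ but never mention the remaining half edges at $v_0$, and your base case is stated only for ``a single vertex with a single half edge.'' The fix is trivial --- every non-root half edge $e$ at any vertex $v$ must have $\fo_e(v)=-1$, which is forced by the uniqueness of the positive edge at $v$ --- but as written the construction does not assign these orientations.
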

	\begin{proof}
	   Given an edge $e\ne e_0$ of $\cT$, consider the unique path $v_0,\ldots,v_n$ along $\cT$ from $v_0$ to an endpoint $v_n$ of $e$ such that $v_i\not\in\partial(e)$ for all $0\le i<n$. We then set $\fo_{e}(v_n) = -1$. We now leave the reader to check that the orientation $\fo$ is well-defined and to verify its properties and uniqueness.
	\end{proof}
	\begin{definition}\label{common-ancestor}
	    Let $(\cT,e_0)$ be a rooted tree and $v,v'\in V$ be vertices of $\cT$. Then, the \emph{\textbf{nearest common ancestor}} of $v,v'$ is the unique vertex $u\in V$ such that there is a positive path $\cP$ (resp. $\cP'$) from $u$ to $v$ (resp. $v'$) and moreover, $u$ is the unique vertex common to $\cP$ and $\cP'$.
	\end{definition}
	\begin{definition}\label{stable-tree-def}
	    A tree $\cT$ is called \emph{\textbf{stable}} if each vertex $v$ of $\cT$ satisfies $\deg(v)\ge 3$. If $\deg(v) = 3$ for all vertices $v$, then we call $\cT$ a \emph{\textbf{maximal}} stable tree.
	\end{definition}
	\begin{lemma}\label{tree-edge-bound}
	    Suppose $\cT = (V,E,\partial)$ is a stable tree and let $E = \normalfont E_\text{ext}\sqcup E_\text{int}$ be the partition of the edge set into half and full edges. Then, we have the inequalities
	    \begin{align}
	        \normalfont
	        |E_\text{int}|+1 = |V|\le\frac13\cdot\displaystyle\sum_{v\in V}\deg(v)\le\normalfont|E_\text{ext}|-2
	    \end{align}
	    with equality precisely when $\cT$ is a maximal stable tree.
	\end{lemma}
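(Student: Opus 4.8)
The plan is to count incidences between vertices and edges in two ways and then combine this with two elementary facts about trees: the number of edges of a tree on $n$ nodes (in the geometric realization sense), and the partition of $E$ into half and full edges. First I would recall that since $\|\cT\|$ is contractible (i.e., a tree in the topological sense), it is connected and acyclic, so the number of $1$-cells in $\|\cT\|$ equals the number of $0$-cells minus one. The $0$-cells of $\|\cT\|$ are $V \sqcup E$ and the $1$-cells are exactly the incident pairs $(v,e)$ with $v \in \partial(e)$, whose total number is $\sum_{v \in V} \deg(v)$ by definition of degree. Hence
\begin{align}
    \sum_{v\in V}\deg(v) = |V| + |E| - 1.
\end{align}
On the other hand, counting incidences from the edge side, $\sum_{v\in V}\deg(v) = \sum_{e\in E}|\partial(e)| = |E_\text{ext}| + 2|E_\text{int}|$, since half edges contribute $1$ and full edges contribute $2$. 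Combining these two identities with $|E| = |E_\text{ext}| + |E_\text{int}|$ gives $|E_\text{int}| + 1 = |V|$, which is the leftmost equality claimed.

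Next I would bring in stability. Since $\deg(v)\ge 3$ for every vertex, we get $\sum_{v\in V}\deg(v)\ge 3|V|$, i.e. $|V|\le\frac13\sum_{v\in V}\deg(v)$, with equality exactly when $\deg(v)=3$ for all $v$, i.e. when $\cT$ is a maximal stable tree. This is the middle inequality. For the rightmost inequality, I would substitute the incidence identities: from $\sum_{v}\deg(v) = |E_\text{ext}| + 2|E_\text{int}|$ and $|E_\text{int}| = |V| - 1$ we get $\sum_v \deg(v) = |E_\text{ext}| + 2|V| - 2$, so
\begin{align}
    \frac13\sum_{v\in V}\deg(v)\le|E_\text{ext}| - 2
    \iff \sum_{v\in V}\deg(v)\le 3|E_\text{ext}| - 6
    \iff |E_\text{ext}| + 2|V| - 2 \le 3|E_\text{ext}| - 6,
\end{align}
which simplifies to $|V| \le |E_\text{ext}| - 2$. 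But we already have $|V| \le \frac13\sum_v\deg(v)$, and the computation above shows $\frac13\sum_v\deg(v) = \frac13(|E_\text{ext}| + 2|V| - 2)$; plugging $|V|\le\frac13\sum_v\deg(v)$ into itself (or rearranging $3|V|\le |E_\text{ext}| + 2|V| - 2$) directly yields $|V|\le|E_\text{ext}| - 2$, hence $\frac13\sum_v\deg(v)\le|E_\text{ext}| - 2$. The equality case here is inherited from the middle inequality, so all three relations are equalities precisely when $\cT$ is maximal stable.

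I expect the only real subtlety — hardly an obstacle — to be making sure the topological characterization of a tree is used correctly, namely that contractibility of $\|\cT\|$ forces the Euler-characteristic identity $\#(0\text{-cells}) - \#(1\text{-cells}) = 1$; everything else is bookkeeping with the two incidence counts. One should also double-check the edge cases (e.g. a tree with a single vertex, whose only edges are half edges), but stability with $\deg(v)\ge 3$ makes these automatically consistent with the stated bounds. No result beyond the definitions in Appendix \ref{tree-basics} is needed.
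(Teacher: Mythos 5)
Your proposal is correct and follows essentially the same route as the paper: a two-way count of vertex--edge incidences giving $\sum_v\deg(v)=|E_\text{ext}|+2|E_\text{int}|$, the tree identity $|V|=|E_\text{int}|+1$, and stability $\deg(v)\ge 3$ for the inequalities and the equality case. The only difference is that you derive $|V|=|E_\text{int}|+1$ from the Euler characteristic of the geometric realization $\|\cT\|$ rather than citing it outright, which is a reasonable (arguably more careful) way to justify the same fact given the paper's definition of a tree via contractibility of $\|\cT\|$.
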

	\begin{proof}
	    Counting the set $\{(e,v)\;|\;v\in\partial(e)\}\subset E\times V$ in two ways gives
	    \begin{align}
	        \sum_{v\in V}\deg(v) = |E_\text{ext}| + 2\cdot|E_\text{int}|.
	    \end{align}
	    Since $\cT$ is a tree, we see that $|V| = |E_\text{int}| + 1$. From stability, we get $\deg(v)\ge 3$ for all $v\in V$. Combining these three statements, we get the desired inequalities.
	\end{proof}
	\begin{definition}\label{mark-def}
	    A pair $(G,E_0)$ is called a \emph{\textbf{marked}} graph if $G = (V,E,\partial)$ is a graph and $E_0\subset E$ is a subset consisting entirely of half edges. We call $E_0$ the \emph{\textbf{marking}} in this case and omit it from the notation when it is clear from context. More generally, if $S$ is a finite set, then an injective map $\iota:S\to E$ (with image lying in the set of half edges) is called an \emph{\textbf{$S$-marking}} of $G$.
	\end{definition}
	\begin{definition}\label{splitting-def}
	    Consider a tree $\cT = (V,E,\partial)$. Let $F\subset E$ be a set of full edges. Define a new graph $\cT'$, called the \emph{\textbf{splitting of $\cT$ along $F$}}, on the same set $V$ of vertices as follows. The edge set of $\cT'$ is given by
	    \begin{align}
	        E' = (E\setminus F)\cup\{(e,v)\;|\;e\in F\,,v\in\partial(e)\}
	    \end{align}
	    while the boundary map $\partial'$ is given by $\partial'(e,v) = \{v\}$ and $\partial'|_{E\setminus F} = \partial_{E\setminus F}$. Then, the graph $\cT'$ has $|F|+1$ connected components and each is a tree.
	\end{definition}
	\begin{remark}
	    In the situation of Definition \ref{splitting-def}, for all $v\in V$, the degree of $v$ is the same in both $\cT$ and $\cT'$. In particular, if $\cT$ is stable then so is each connected component of $\cT'$. 
	\end{remark}
	\begin{definition}
	    Suppose $(\cT,e_0)$ is a rooted tree, $E_0\subset E$ is a marking and $F\subset E$ is a set of full edges. Define a marking $E_0'$ on $\cT'$, the splitting of $\cT$ along $F$, by setting
	    \begin{align}
	        E_0' = E_0\cup\{(e,v)\;|\;e\in F\,,v\in\partial(e)\}.
	    \end{align}
	    Moreover, define $R = \{e_0\}\cup\{(e,e^+)\;|\;e\in F\}\subset E'$, where we are using the orientation $\fo$ on $(\cT,e_0)$ provided by Lemma \ref{orient-rooted-tree}. Then, each connected component of $\cT'$ contains a unique edge in $R$, which provides it with the structure of a rooted tree. Moreover, equipped with the marking inherited from $E'_0$, we call this the collection of \emph{\textbf{rooted marked trees obtained by splitting $(\cT,e_0,E_0)$ along $F$}}. 
	\end{definition}
	\begin{lemma}
	    For an integer $n\ge 2$, define $T_n$ to be the number of stable rooted trees, up to isomorphism preserving the root edge, with $n+1$ half edges. We then have the bound
	    \begin{align}
	        T_n\le\frac1{4\sqrt 3}\left(\frac{2^n}{\sqrt n}\right)^3
	    \end{align}
	    for all integers $n\ge 2$.
	\end{lemma}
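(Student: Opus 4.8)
The plan is to bound $T_n$ by first resolving an arbitrary stable rooted tree into a maximal (trivalent) one and then counting maximal stable rooted trees via Catalan numbers.

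First I would establish the reduction $T_n\le 2^{n-2}M_n$, where $M_n$ denotes the number of isomorphism classes of \emph{maximal} stable rooted trees (Definition \ref{stable-tree-def}) with $n+1$ half edges. Given any stable rooted tree $\cT$ with $n+1$ half edges, resolve each vertex $v$ of degree $d>3$ by replacing it with a trivalent ``caterpillar'' on $d-2$ vertices that distributes the $d$ edges formerly incident to $v$ among its external slots; the $d-3$ newly created (internal) edges are collected into a set $S$. Carrying this out at every vertex produces a maximal stable rooted tree $\cT'$ together with a subset $S$ of its internal edges such that contracting the edges of $S$ recovers $\cT$ (the root half edge, never being internal, is untouched, so the rooting is preserved throughout). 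The assignment sending $(\cT',S)$ to the tree obtained by contracting $S$ descends to isomorphism classes and is surjective onto stable rooted trees with $n+1$ half edges, so $T_n$ is at most the number of isomorphism classes of such pairs $(\cT',S)$. By Lemma \ref{tree-edge-bound}, a maximal stable rooted tree with $n+1$ half edges has exactly $n-2$ internal edges, so for each of the $M_n$ classes of $\cT'$ there are at most $2^{n-2}$ choices of $S$, giving $T_n\le 2^{n-2}M_n$.

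Next I would bound $M_n\le C_{n-1}=\frac1n\binom{2n-2}{n-1}$. Equipping a maximal stable rooted tree with $n+1$ half edges with a planar embedding turns it into a full binary tree with $n$ leaves (the root half edge sits at the root vertex, each vertex carries its parent edge together with two ordered children, and the $n$ non-root half edges are the leaves), and there are exactly $C_{n-1}$ such binary trees; forgetting the embedding is a surjection onto isomorphism classes of abstract maximal stable rooted trees. Hence $T_n\le\frac{2^{n-2}}{n}\binom{2n-2}{n-1}$. To finish, a one-line induction on $m$ — whose inductive step, after squaring, reduces to $19m\le 20m$ — shows $\binom{2m}{m}\le 4^m/\sqrt{3m+1}$ for all $m\ge 1$ (with equality at $m=1$). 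Taking $m=n-1$ gives $T_n\le\frac{2^{n-2}}{n}\cdot\frac{4^{n-1}}{\sqrt{3n-2}}=\frac{8^n}{16\,n\sqrt{3n-2}}$, and since $16\sqrt{3n-2}\ge 4\sqrt{3n}$ for every $n\ge 1$ (equivalently $45n\ge 32$), the right-hand side is at most $\frac{8^n}{4\sqrt3\,n^{3/2}}=\frac1{4\sqrt3}\bigl(\frac{2^n}{\sqrt n}\bigr)^3$, as claimed.

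The only step that demands genuine care is the resolution argument: one must check that the caterpillar replacement of a high-degree vertex really yields a trivalent tree, that it is compatible with the rooting, and that the induced map on isomorphism classes of pairs $(\cT',S)$ is surjective onto isomorphism classes of stable rooted trees (injectivity is neither needed nor available, since a given $\cT$ has many trivalent resolutions). The Catalan bound on $M_n$ and the two closing inequalities are routine.
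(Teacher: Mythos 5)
Your proposal is correct and follows essentially the same route as the paper: reduce to maximal (trivalent) stable rooted trees by contracting a subset of the $n-2$ internal edges (the factor $2^{n-2}$), bound the number of maximal trees by a Catalan number, and finish with the estimate $\binom{2m}{m}\le 4^m/\sqrt{3m+1}$. You supply more detail on the resolution step and use the sharper index $C_{n-1}$ (the paper, counting bracketings of $x_1\cdots x_n$, quotes $C_n=\frac1{n+1}\binom{2n}{n}$, which is still a valid upper bound), but the argument is the same.
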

	\begin{proof}
	    For $n\ge 2$, define $T'_n$ to be the number of maximal stable rooted trees, up to isomorphism preserving the root, with $n+1$ half edges. We then have the inequality
	    \begin{align}
	        T_n\le 2^{n-2}T'_n
	    \end{align}
	    which follows by observing that any stable rooted tree can be formed by contracting a subset of full edges in some maximal stable tree. $T'_n$ can alternatively be described as the number of ways of multiplying $n$ variables using a binary operation which is commutative but non-associative. This number is clearly bounded above by the number ways of bracketing the expression $x_1\cdots x_n$ (where the order of the $n$ variables is fixed and the product is non-commutative and non-associative), the Catalan number $C_n = \frac{1}{n+1}\binom{2n}{n}$. Combining this with the elementary estimate $\binom{2n}{n}\le \frac{4^n}{\sqrt{3n+1}}$ (\cite[Solution to Problem 1]{kaz-ineq}), we get the stated result.
	\end{proof}
	\subsection{Curves modeled on trees}\label{curve-family-proofs}
	We prove the Lemmas from \textsection\ref{curve-family}.
	\begin{proof}[Proof of Lemma \ref{prop-family-curve}]
	    \begin{enumerate}[(i)]
	        \item This follows from \eqref{def-eqn} since $\rho^*_{u,e}\ne 0$ by assumption. Indeed, it is clear that \eqref{def-eqn} expresses $[x_u:y_u]$ as a one-to-one function of $[x_v:y_v]$ and vice versa. Since $\cT$ is connected, the result follows.
	        \item Equation \eqref{def-eqn} shows that if $[x_u:y_u] = [1:0]$, then we also have $[x_v:y_v] = [1:0]$. In fact, we can solve \eqref{def-eqn} to get $[x_v:y_v]$ as a function of $[x_u:y_u]$ as below    \begin{align}\label{sol-near-infty}
	                [x_v:y_v] = [x_u - z^*_{u,e}y_u:\gamma^*_e\rho^*_{u,e}y_u]  
	        \end{align}
	        for $[x_u:y_u]$ Zariski close to $[1:0]$. Thus, $\cC_{p^*,\cT}$ is the graph of a map $\bP^1\to\prod_{v\ne v_0}\bP^1$ near $[1:0]$.
	        \item Looking at equation \eqref{def-eqn}, it is clear that for $y_v\ne 0$, we can express $[x_u:y_u]$ as a function of $[x_v:y_v]$ as follows
	        \begin{align}
	        \label{marked-point-descent}
	            [x_u:y_u] = [z^*_{u,e}y_v + \gamma^*_e\rho^*_{u,e}x_v:y_v]
	        \end{align}
	        with $y_v\ne 0$ again. Also notice that if $\gamma^*_e\ne 0$, then we are in the situation of the proof of (i), i.e., $[x_v:y_v]$ is also a function of $[x_u:y_u]$.\\\\
	        Define $A_w\subset V$ to be the set of vertices appearing in the (positive) path from $v_0$ to $w$. Let $B_{w,p^*}\subset V$ be the set of vertices $w'$ such that there exists a vertex $w''\in A_w$ and a positive path from $w''$ to $w'$ meeting $A_w$ only in $w''$ and traversing only edges $e$ for which $\gamma^*_e\ne 0$. Our discussion so far implies that for $w'\in B_{w,p^*}$, we can solve for $[x_{w'}:y_{w'}]$ uniquely as a function of $[x_w:y_w]$ at least near $[z^*_{w,f}:1]$. Moreover, the formula \eqref{marked-point-descent} shows that when $w''\in A_w$ and $[x_w:y_w] = [z^*_{w,f}:1]$, we have $[x_{w''}:y_{w''}] = [z^{w''}_{w,f}(p^*):1]$. Now, suppose $u\in B_{w,p^*}$ and $e\in E$ with $e^- = u$, $e^+ = v$ and $\gamma^*_e = 0$. Then, we can solve \eqref{def-eqn} uniquely for $[x_v:y_v]$ in terms of $[x_u:y_u]$ (to get $[x_v:y_v] = [1:0]$) except at $[x_u:y_u] = [z^*_{u,e}:1]$. If $[x_v:y_v]$ is determined to be $[1:0]$, then the argument of (ii) shows that $[x_{v'}:y_{v'}] = [1:0]$ also holds for every $v'$ such that there is a positive path from $v$ to $v'$.\\\\
	        \indent Thus, to show that $[x_v:y_v]$ is uniquely determined (and equal to $[1:0]$) in terms of $[x_w:y_w]$ near $[z^*_{w,f}:1]$, we only need to show that there is no point on $q\in\cC_{p^*,\cT}$ for which $\pi_u(q) = [z^*_{u,e}:1]$ and $\pi_w(q) = [z^*_{w,f}:1]$, with $(u,e)$ as in the previous paragraph. Indeed, if there were such a point $q$, take $w''\in A_w$ to be the nearest common ancestor of $u$ and $w$. From $\pi_u(q) = [z^*_{u,e}:1]$, we get $\pi_{w''}(q) = [z^{w''}_{u,e}(p^*):1]$ while from $\pi_w(q) = [z^*_{w,f}:1]$, we get $\pi_{w''}(q) = [z^{w''}_{w,f}(p^*):1]$. This gives $z^{w''}_{u,e}(p^*) = z^{w''}_{w,f}(p^*)$, which contradicts the condition $F_\cT(p^*)\ne 0$. This completes the proof that $\pi_w:\cC_{p^*,\cT}\to\bP^1$ is a local isomorphism over $[z^*_{w,f}:1]$. 
	        \item By induction, assume that $\cC_{p^*_u,\cT_u}$ and $\cC_{p^*_v,\cT_v}$ are both prestable genus zero curves. It's clear from the defining equations that $\cC_{p^*,\cT}$ is the intersection of $\cC_{p^*_u,\cT_u}\times\cC_{p^*_v,\cT_v}$ with the inverse image of
	        \begin{align}
	             (\bP^1_u\times[1:0])\cup([z^*_{u,e}:1]\times \bP^1_v)\subset\bP^1\times\bP^1
	        \end{align}
	        under the $(u,v)$-coordinate projection $(\bP^1)^V\to\bP^1\times\bP^1$. Now, we can use (ii) to construct $q_{p^*,u}$ and (iii) to construct $q_{p^*,v}$ and \eqref{break-into-comps} follows. Moreover, by (ii) and (iii), $q_{p^*,u}\in\cC_{p^*_v,\cT_v}$ and $q_{p^*,v}\in\cC_{p^*_u,\cT_u}$ are smooth points and thus, $\cC_{p^*,\cT}$ is also a prestable genus $0$ curve.
	        \item We can deduce the existence and uniqueness of the maps $\sigma_\cT(e,\cdot)$ for each $e\in E_\text{ext}$ using (ii) (resp. (iii)) for $e = e_0$ (resp. $e\ne e_0$). In fact, (ii) and (iii) imply that $\sigma_\cT(e,p^*)\in\cC_{p^*,\cT}$ is a smooth point for each $p^*\in\cM_\cT$ and $e\in E_\text{ext}$. We are just left to prove that $\sigma_\cT$ is an embedding. Suppose to the contrary that we have $e,e'\in E_\text{ext}$ with $e\ne e'$ and $\sigma_\cT(e,p^*) = \sigma_\cT(e',p^*)$. In view of the proof of (ii), we must have $e\ne e_0$ and $e'\ne e_0$. Thus, let $v = e^-$, $v' = e'^-$ and define $u\in V$ be the nearest common ancestor of $v,v'$. Arguing as in the proof of (iii) gives
	        \begin{align}
	           \pi_u(\sigma_\cT(e,p^*)) &= [z^u_{v,e}(p^*):1]\\
	                \pi_u(\sigma_\cT(e',p^*)) &= [z^u_{v',e'}(p^*):1]
	       \end{align}
	       and thus, $\sigma_\cT(e,p^*) = \sigma_\cT(e',p^*)$ gives $z^u_{v,e} = z^u_{v',e'}$ at $p^*$, a contradiction to $F_\cT(p^*)\ne 0$.
	    \end{enumerate}
	\end{proof}
	\begin{proof}[Proof of Lemma \ref{compact-curve-prop}]
	    \begin{enumerate}[(i)]
	        \item By \eqref{M-def1} and \eqref{M-def2}, we have $|z^*_{v,e}| + |\rho^*_{v,e}|\le 3\theta\le\frac12$ for any $e\in v^+$, which proves the first assertion. Now, take $c<\tau^{-1}$ and $e\ne e'$ in $v^+$ and note that by \eqref{M-def3}, we have
	        \begin{align}
	            |z^*_{v,e}-z^*_{v,e'}|> c\cdot(|\rho^*_{v,e}|+|\rho^*_{v,e'}|)
	        \end{align}
	        which proves the second assertion.
	        \item This is a quantitative version of Lemma \ref{prop-family-curve}(ii). Indeed, it suffices to observe that in formula \eqref{sol-near-infty}, if $|x_u|\ge|y_u|$, then we also have $|x_v|\ge|y_v|$. Indeed, \eqref{sol-near-infty} gives
	        \begin{align}
	            \left|\frac{x_v}{y_v}\right| = \left|\frac{\frac{x_u}{y_u} - z^*_{u,e}}{\gamma^*_e\rho^*_{u,e}}\right|\ge\frac{1-\theta}{2\tau\theta}>1
	        \end{align}
	        whenever we have $|x_u|\ge|y_u|$.
	        \item This is a quantitative version of Lemma \ref{prop-family-curve}(iii). Making use of the notation from that proof, we note that for $[x_w:y_w]$ such that $|\frac{x_w}{y_w} - z^*_{w,f}|\le|\rho^*_{w,f}|$, the corresponding value of $[x_{w'}:y_{w'}]$ for any $w\ne w'\in A_w$ satisfies $|\frac{x_{w'}}{y_{w'}} - z^*_{w',f'}|\le|\rho^*_{w',f'}|$, where $f'$ is the first edge traversed in the (positive) path from $w'$ to $w$. Indeed, we can see from \eqref{marked-point-descent} that if $e'$ is an edge with $e'^- = v$, then
	        \begin{align}
	            \left|\frac{x_u}{y_u} - z^*_{u,e}\right| = \left|\gamma^*_e\rho^*_{u,e}\frac{x_v}{y_v}\right|\le \tau|\rho_{u,e}^*|\cdot(|z^*_{v,e'}| + |\rho^*_{v,e'}|)\le3\theta\tau|\rho^*_{u,e}|\le|\rho^*_{u,e}|
	        \end{align}
	        whenever $|\frac{x_v}{y_v} - z^*_{v,e'}|\le|\rho^*_{v,e'}|$. In particular, $|z^{w'}_{w,f}(p^*)-z^*_{w',f'}|\le|\rho^*_{w',f'}|$. Now, following the argument of Lemma \ref{prop-family-curve}(iii), we see that we are done once we observe that the balls $B^2(z^*_{w',f'},|\rho^*_{w',f'}|)$ and $B^2(z^*_{w',f''},|\rho^*_{w',f''}|)$ are disjoint by (i) above, for $w'\in A_w$ and $f'\ne f''$ in $(w')^+$.
	        \item By projecting $\cC_{p^*,\cT}$ to $\bP^1\times\bP^1$ using $\pi_u\times\pi_v$ it is clear that we only need to perform the identification \eqref{local-plumbing} over the regions
	        \begin{align}
	            \left\{|\gamma^*_e|\le \left|z_u\right|\le1\right\}&\subset D_u\\
	            \left\{|\gamma^*_e|\le\left|z'_v\right|\le1\right\}&\subset D_v
	        \end{align}
	        to obtain $\cC_{p^*,\cT}$ from $\cC_{p^*_u,\cT_u}$ and $\cC_{p^*_v,\cT_v}$.
	    \end{enumerate}
	\end{proof}
	\begin{lemma}\label{disc-round-to-flat}
	    The holomorphic embedding $\iota:B^2(1)\to\bP^1$ given by $z\mapsto[z:1]$ satisfies $1\le\frac{\iota^*\omega_{\bP^1}}{\omega_\bC}\le 4$ pointwise on $B^2(1)$.
	\end{lemma}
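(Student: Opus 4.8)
The plan is a direct computation using the explicit coordinate formulas for $\omega_{\bP^1}$ and $\omega_\bC$ recorded in \textsection\ref{notation}. First I would pull back $\omega_{\bP^1}$ along $\iota$: since $\iota$ is the map $z\mapsto[z:1]$, which is the identity in the standard affine coordinate, we simply have
\begin{align}
    \iota^*\omega_{\bP^1} = \frac{2i\cdot dz\wedge d\bar z}{(1+|z|^2)^2},
\end{align}
while $\omega_\bC = \tfrac i2\,dz\wedge d\bar z$. Taking the ratio (both forms are real multiples of the same $2$-form $\tfrac i2\,dz\wedge d\bar z$, which is positive), we get
\begin{align}
    \frac{\iota^*\omega_{\bP^1}}{\omega_\bC} = \frac{4}{(1+|z|^2)^2}.
\end{align}

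Next I would bound this quantity on $B^2(1)$. For $z\in B^2(1)$ we have $0\le|z|^2\le1$, hence $1\le 1+|z|^2\le 2$, and therefore $1\le(1+|z|^2)^2\le4$. Taking reciprocals and multiplying by $4$ gives $1\le\frac{4}{(1+|z|^2)^2}\le4$, which is exactly the claimed pointwise inequality (the upper bound $4$ being attained at $z=0$ and the lower bound $1$ on the boundary $|z|=1$).

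There is no real obstacle here — the statement is a routine consequence of the explicit Fubini–Study expression. The only point to be slightly careful about is that the ``ratio'' of two $(1,1)$-forms on a complex curve is well defined precisely because both are pointwise nonnegative multiples of the canonical area form $\tfrac i2\,dz\wedge d\bar z$, which is why the comparison makes sense as a scalar inequality.
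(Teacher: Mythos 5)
Your computation is correct and is exactly the "direct computation using the formulas for $\omega_\bC$ and $\omega_{\bP^1}$" that the paper's proof invokes: the ratio is $\frac{4}{(1+|z|^2)^2}$, which lies in $[1,4]$ on $B^2(1)$. Nothing to add.
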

	\begin{proof}
	    This is a direct consequence of a computation using the formulas for $\omega_\bC$ and $\omega_{\bP^1}$ given in \textsection\ref{notation}.
	\end{proof}
	\begin{lemma}\label{annulus-cyl-flat}
	    Let $R>0$ and define $\delta = e^{-R}$ and 
	    \begin{align}
	        A_\delta = \{(z,w)\in B^2(1)\times B^2(1)\;|\;zw=\delta^2\}\subset\bC^2.
	    \end{align}
	    Define the isomorphism $\nu:[-R,R]\times S^1\to A_\delta$ by $\nu(s,t) = (e^{-R}e^{-(s+it)},e^{-R}e^{s+it})$. We then have
	    \begin{align}
	        \nu^*(\omega_\bC\oplus\omega_\bC) = e^{-2R}(e^{2s} + e^{-2s})\,ds\wedge dt.
	    \end{align}
	\end{lemma}
	\begin{proof}
	    Direct computation using the formula for $\omega_\bC$ given in \textsection\ref{notation}.
	\end{proof}
	\begin{lemma}\label{annulus-flat-dist}
	    Let $0\le\delta<1$ and suppose $z,w,z',w'\in\bC$ with $|z|,|w|,|z'|,|w'|
	    \le 1$ and $zw = z'w' = \delta^2$. Then, there exist piecewise $C^1$ paths $\gamma_z,\gamma_w:[0,1]\to\bC$ from $z$ to $z'$, resp. $w$ to $w'$, such that
	    \begin{align}
	        \sup_{t\in[0,1]}|\gamma_z(t)|\le 1\\
	        \sup_{t\in[0,1]}|\gamma_w(t)|\le 1\\
	        \gamma_z\gamma_w \equiv \delta^2
	    \end{align}
	    and we have the length estimate $\ell(
	    \gamma_z) + \ell(\gamma_w)\le 8\pi\max\{|z - z'|,|w-w'|\}$.
	\end{lemma}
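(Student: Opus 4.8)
The plan is to build the paths explicitly in the plane, treating the degenerate case $\delta=0$ and the main case $\delta>0$ separately. When $\delta=0$, the relation $zw=z'w'=0$ forces one coordinate of each pair to vanish, and it suffices to concatenate at most two straight radial segments, each moving in one coordinate while the other is held identically $0$ (passing through the origin if the nonzero coordinate switches between the two pairs); this stays in the closed unit disk, keeps $\gamma_z\gamma_w\equiv 0$, and yields $\ell(\gamma_z)\le|z-z'|$ and $\ell(\gamma_w)\le|w-w'|$. So assume $\delta>0$. Then every admissible point has $z\ne 0\ne w$ and $|z|,|w|\in[\delta^2,1]$, and writing $z=re^{i\alpha}$ forces $w=\delta^2/z=(\delta^2/r)e^{-i\alpha}$, so the data is just the pair $(r,\alpha)$ with $r\in[\delta^2,1]$.

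Connect $(z,w)$ to $(z',w')$ by a path made of three pieces: (1) shrink $|z|$ radially, at the fixed angle $\arg z$, from $|z|$ to $m:=\min(|z|,|z'|)$; (2) sweep a circular arc of radius $m$ in the $z$-coordinate (equivalently of radius $\delta^2/m=\max(|w|,|w'|)$ in the $w$-coordinate), turning through an angle $\Delta\alpha$ with $|\Delta\alpha|\le\pi$, chosen so that at the end the $z$-coordinate points in the direction $\arg z'$; (3) grow $|z|$ radially, at the fixed angle $\arg z'$, from $m$ back up to $|z'|$. Along each piece both coordinates keep modulus in $[\delta^2,1]$, hence stay in the closed unit disk, the identity $\gamma_z\gamma_w\equiv\delta^2$ holds throughout, and the relevant angular differences $\arg z-\arg z'$ and $\arg w-\arg w'$ both have absolute value $|\Delta\alpha|$.

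For the lengths, the two radial pieces contribute exactly $\bigl||z|-|z'|\bigr|\le|z-z'|$ to $\ell(\gamma_z)$ and $\bigl||w|-|w'|\bigr|\le|w-w'|$ to $\ell(\gamma_w)$, and the arc contributes $m\,|\Delta\alpha|$ to $\ell(\gamma_z)$ and $\max(|w|,|w'|)\,|\Delta\alpha|$ to $\ell(\gamma_w)$. The key tool is the elementary estimate $1-\cos\theta\ge 2\theta^2/\pi^2$ for $|\theta|\le\pi$: together with
\[
|z-z'|^2=(|z|-|z'|)^2+2|z||z'|(1-\cos\Delta\alpha)\ge 2|z||z'|(1-\cos\Delta\alpha)
\]
and the analogous identity for $w$, it gives $\sqrt{|z||z'|}\,|\Delta\alpha|\le\tfrac\pi2|z-z'|$ and $\sqrt{|w||w'|}\,|\Delta\alpha|\le\tfrac\pi2|w-w'|$. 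Since $m\le\sqrt{|z||z'|}$, the $z$-arc has length $\le\tfrac\pi2|z-z'|$.

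The one point requiring care is the $w$-arc, because $\max(|w|,|w'|)$ can exceed $\sqrt{|w||w'|}$ by an arbitrarily large factor. Here one distinguishes two cases: if the larger of $|w|,|w'|$ exceeds four times the smaller, then $|w-w'|\ge\tfrac34\max(|w|,|w'|)$ by the triangle inequality for moduli; otherwise $\max(|w|,|w'|)\le 2\sqrt{|w||w'|}$. In both cases $\max(|w|,|w'|)\,|\Delta\alpha|\le\tfrac{4\pi}{3}|w-w'|$. Adding up, $\ell(\gamma_z)+\ell(\gamma_w)\le(1+\tfrac\pi2)|z-z'|+(1+\tfrac{4\pi}{3})|w-w'|\le(2+\tfrac{11\pi}{6})\max\{|z-z'|,|w-w'|\}<8\pi\max\{|z-z'|,|w-w'|\}$, which is the claim; everything besides this case split is a routine concatenation-and-length computation.
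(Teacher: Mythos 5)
Your proof is correct, and it takes a genuinely different route from the paper's. The paper fixes $\gamma_z$ first — a straight segment from $z$ to $z'$ modified to detour around an inner circle, which lies in an annulus $R\le|\zeta|\le R'$ and has length $\le\frac\pi2|z-z'|$ — and then sets $\gamma_w=\delta^2/\gamma_z$, using $|\dot\gamma_w|=\frac{\delta^2}{|\gamma_z|^2}|\dot\gamma_z|\le|\dot\gamma_z|$ whenever $|\gamma_z|\ge\delta$; the case analysis there is governed by which side of the waist circle $|\zeta|=\delta$ the points $z,z'$ lie on, with the mixed case handled by routing both paths through the point $\delta$ (which is where the factor $8\pi$ comes from). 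You instead parametrize the whole annulus $zw=\delta^2$ by $(r,\alpha)$ and build a symmetric radial--arc--radial path, controlling the arc by the chord estimate $\sqrt{|z||z'|}\,|\Delta\alpha|\le\frac\pi2|z-z'|$ (via $1-\cos\theta\ge 2\theta^2/\pi^2$) together with the two-case comparison of $\max(|w|,|w'|)$ against $\sqrt{|w||w'|}$. All the steps check out: the moduli stay in $[\delta^2,1]$, the radial pieces contribute exactly $\bigl||z|-|z'|\bigr|$ and $\bigl||w|-|w'|\bigr|$, and both case splits for the $w$-arc are sound. Your construction even yields the sharper constant $2+\frac{11\pi}{6}\approx 7.76$ in place of $8\pi$, so it would strengthen Lemma \ref{annulus-dist} downstream; the paper's argument is slightly quicker in the case where $z,z'$ lie on the same side of the waist, but pays for the mixed case with a doubled detour.
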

	\begin{proof}
	    First, note that given any two real numbers $0<R\le R'$, and $a,b\in\bC$ with $R\le|a|,|b|\le R'$, we can join $a$ to $b$ by a path lying in $\{\zeta\in\bC\;|\;R\le|\zeta|\le R'\}$ with length $\le\frac12\pi|a-b|$. Indeed, we can simply take the straight line path from $a$ to $b$ and, if it intersects the disk $B^2(R)$, replace this portion of the path by the smaller arc along $S^1(R)$. When $\delta = 0$, the lemma is obvious, so assume $\delta\ne 0$.\\\\
	    Without loss of generality, assume we have $|z|\ge\delta$ and $|w|\le\delta$. We will consider the following cases.
	    \begin{enumerate}[(i)]
	        \item Suppose $|z'|\ge\delta$ and $|w'|\le\delta$. Choose a path $\gamma_z$ from $z$ to $z'$ such that $\delta\le\inf_t|\gamma_z(t)|\le\sup_t|\gamma_z(t)|\le 1$ and has length $\le\frac12\pi|z-z'|$. Define $\gamma_w = \delta^2/\gamma_z$. Note that we have $|\dot\gamma_w| = \frac{\tau^2}{|\gamma_z|^2}|\dot\gamma_z|\le|\dot\gamma_z|$, and so, $\ell(\gamma_w)\le\ell(\gamma_z)$ in this case. As a result, $\ell(\gamma_z) + \ell(\gamma_w)\le\pi|z-z'|$.
	        \item Suppose $|z'|\le\delta$ and $|w'|\ge\delta$. We consider the following sub-cases.
	        \begin{enumerate}[(a)]
	            \item Suppose $|z'|\le\frac12|z|$ and $\frac12|w'|\ge |w|$. We then have $|z - z'|\ge\frac12|z|$ and $|w - w'|\ge\frac12|w'|$. Let $\gamma_{1,z}$ be a path from $z$ to $\delta$ with $\delta\le\inf_t|\gamma_{1,z}(t)|\le\sup_t|\gamma_{1,z}(t)|\le 1$ and $\ell(\gamma_{1,z})\le\frac12\pi|z - \delta|$. By the argument from case (i), we find that $\gamma_{1,w} = \delta^2/\gamma_{1,z}$ has length $\le\ell(\gamma_{1,z})$. Thus,
	            \begin{align}
	                \ell(\gamma_{1,z}) + \ell(\gamma_{1,w})\le\pi|z-\delta|\le 2\pi|z|\le 4\pi|z - z'|.
	            \end{align}
	            Replace $z$ by $w'$ in the above to obtain paths $\gamma_{2,w}$ (from $\delta$ to $w'$) and $\gamma_{2,z} = \delta^2/\gamma_{2,w}$. We can now take $\gamma_z$ to be the concatenation of $\gamma_{1,z}$ and $\gamma_{2,z}$. Defining $\gamma_w$ similarly, we find
	            \begin{align}
	                \ell(\gamma_z) + \ell(\gamma_w)\le8\pi\max\{|z-z'|,|w-w'|\}.
	            \end{align}
	            \item Suppose that $\frac12|z|\le|z'|$ and $\frac12|w'|\le|w|$. This implies that $\frac\delta2\le|z'|\le\delta\le|z|\le 2\delta$. Now choose any path $\gamma_z$ from $z$ to $z'$ such that $\max\{\frac12\delta,\delta^2\}\le\inf_t|\gamma_z(t)|\le\sup_t|\gamma_z(t)|\le\min\{2\delta,1\}$ with length $\le\frac12\pi|z - z'|$. Again arguing as in case (i), we see that $\gamma_w = \delta^2/\gamma_z$ has length $\le 4\ell(\gamma_z)$. Thus, we find $\ell(\gamma_z) + \ell(\gamma_w)\le\frac52\pi|z - z'|$.
	        \end{enumerate}
	    \end{enumerate}
	\end{proof}
	\end{section}
	\addcontentsline{toc}{section}{References}
	\bibliographystyle{amsalpha}
	\bibliography{main}
\end{document}